\theoremstyle{plain}
\newtheorem{Pocz}{Poczatek}[section]
\newtheorem{Proposition}[Pocz]{Proposition}
\newtheorem{Theorem}[Pocz]{Theorem}
\newtheorem{Corollary}[Pocz]{Corollary}
\newtheorem{Lemma}[Pocz]{Lemma}
\newtheorem{Observation}[Pocz]{Observation}
\newtheorem{Notation}[Pocz]{Notation}
\newtheorem{Question}[Pocz]{Question}
\newtheorem{Example}[Pocz]{Example}
\theoremstyle{definition}
\newtheorem{Definition}[Pocz]{Definition}
\theoremstyle{remark}
\newtheorem{Remark}[Pocz]{Remark}
\newcommand{\N}{\mathbb{N}}
\numberwithin{equation}{section}
\title[Ends of spaces via linear algebra]
{Ends of spaces via linear algebra}
\author{Jerzy Dydak}
\address{University of Tennessee, Knoxville, TN 37996, USA}
\email{jdydak@utk.edu}
\address{Xi'an Technological University, No.2 Xuefu zhong lu, Weiyang district, Xi'an, China 710021}
\email{jdydak@gmail.com}
\author{Hussain Rashed}
\address{University of Tennessee, Knoxville, TN 37996, USA}
\email{hrashed1@vols.utk.edu}
\date{ \today
}
\keywords{Boolean algebras, coarse geometry, eigenvectors, ends of groups, Freundenthal compactification, group actions, Svarc-Milnor Lemma}
\subjclass[2000]{Primary 06E15; Secondary 15A42, 20F69, 54D35}
\begin{document}
\maketitle
\begin{center}
\today
\end{center}

\tableofcontents

\begin{abstract}
We develop a theory that may be considered as a prequel to the coarse theory.
We are viewing ends of spaces as extra points at infinity.
In order to discuss behaviour of spaces at infinity one needs a concept (a measure) of approaching infinity. The simplest way to do so is to list subsets of $X$ that are bounded (i.e. far from infinity) and that list should satisfy certain basic properties.
Such a list $S_X$ we call a \textbf{scale} on a set $X$ (see Section \ref{ScaledBooleanAlgebras}). In order to use ideas from the Stone Duality Theorem we consider sub-Boolean algebras $BA_X$ of the power set $2^X$ of $X$ that contain $S_X$ and that leads naturally to the concept of ends of a \textbf{scaled Boolean algebra} $(X,S_X,BA_X)$ which can be attached to $X$
and form a new scaled Boolean algebra $(\bar X,S_X,\overline{BA_X})$
that is \textbf{compact at infinity}.

Given a scaled space $(X,S_X)$ the most natural scaled Boolean algebra is $(X,S_X,2^X)$ which can be too far removed from the geometry of $X$. Therefore we need to figure out how to trim $2^X$ to a smaller sub-Boolean algebra $BA_X$. More generally, how to trim a sub-Boolean algebra $BA_X$ to a smaller one.
That is done using ideas from linear algebra. Namely, we consider a family
$\mathcal{F}$ of naturally arising
$S_X$-linear operators on $BA_X$ and the smaller sub-Boolean algebra
$BA_{\mathcal{F}}$ consists of eigensets of $\mathcal{F}$, an analog of eigenvectors from linear algebra. We show that all ends defined in literature so far (Freundenthal ends, ends of finitely generated groups, Specker ends, Cornulier ends, ends of coarse spaces) are special cases of such a process.

\end{abstract}

\section{Introduction}

Historically, as noted in \cite{DK} on p.287, 
ends are the oldest coarse topological notion. Here is their internal description:

\begin{Definition}\label{DefFreundenthalspace}
A \textbf{Freundenthal space} is a $\sigma$-compact, locally compact, connected and locally connected Hausdorff space $X$. 
\end{Definition}

\begin{Definition}\label{DefFreundenthalEnds}
A \textbf{Freudenthal end} of a Freundenthal space is a decreasing sequence $(U_i)_{i\ge 1}$ of components of $X\setminus K_i$, where $(K_i)_{i\ge 1}$ is an \textbf{exhausting sequence}, i.e. $K_i$ is compact, $K_i\subset int(K_{i+1})$ for each $i\ge 1$, and $\bigcup\limits_{i=1}^\infty K_i=X.$
The set of ends of $X$ is denoted by $Ends(X)$.
\end{Definition}

 Ends were used by Freudenthal in 1930 in his famous compactification (see \cite{Peschke}  for information about theorems in this section and see \cite{Engel} for results related to the theory of dimension):

\begin{Theorem}
Suppose $X$ is a $\sigma$-compact, locally compact, connected and locally connected Hausdorff space. It has a compactification $\bar X$ such that
$\bar X\setminus X$ is of dimension $0$ and $\bar X$ dominates any compactification
$\hat X$ of $X$ whose corona is of dimension $0$.
\end{Theorem}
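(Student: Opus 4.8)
The plan is to realize $\bar X$ as the Freudenthal end compactification $X\sqcup Ends(X)$ and to verify the three assertions separately: (i) it is a compactification, (ii) its corona $Ends(X)$ has dimension $0$, and (iii) it dominates every compactification with $0$-dimensional corona. First I would fix an exhausting sequence $(K_i)$, which exists because $X$ is $\sigma$-compact and locally compact, and form $Ends(X)$ as in Definition \ref{DefFreundenthalEnds}. On $\bar X:=X\sqcup Ends(X)$ I put the topology whose open sets are the open subsets of $X$ together with the basic neighborhoods $\widehat{U}_i:=U_i\cup\{\,e'=(U'_j)\in Ends(X):U'_i=U_i\,\}$ of an end $e=(U_i)$. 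I would first check that this is independent of the exhaustion (any two exhaustions are mutually cofinal, giving a canonical bijection of end sets compatible with the topologies), and that $\bar X$ is Hausdorff with $X$ a dense open subspace.

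The crucial preliminary is a finiteness lemma: for each compact $K=K_i$ the open set $X\setminus K$ has only finitely many components with non-compact closure. I would prove this by using local compactness to pick an open $V$ with $K\subset V\subset\mathrm{cl}(V)\subset\mathrm{int}(K_{i+1})$ and $\mathrm{cl}(V)$ compact; since the frontier of any component $U$ of $X\setminus K$ lies in $K\subset V$, a component cannot avoid $V$ without disconnecting $\mathrm{cl}(U)$, so every unbounded component meets the compact frontier $\mathrm{Fr}(V)$, and local connectedness lets me cover $\mathrm{Fr}(V)$ by finitely many connected open sets inside $X\setminus K$, each lying in a single component, bounding the number of unbounded components. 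With this lemma $Ends(X)=\varprojlim_i\{\text{unbounded components of }X\setminus K_i\}$ is an inverse limit of finite sets, hence compact and totally disconnected; the sets $\{e':U'_i=U_i\}$ are clopen and form a basis, so $\dim Ends(X)=0$. Compactness of all of $\bar X$ then follows by the usual tail argument: from any open cover, finitely many basic neighborhoods $\widehat U_i$ suffice to cover the compact corona, their tails $U_i$ cover everything outside some compact $K_N$, and $K_N$ is covered by finitely many further members of the cover.

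The main obstacle is the domination statement. Given a compactification $\hat X$ with $\dim(\hat X\setminus X)=0$, I would define $g\colon\bar X\to\hat X$ by $g|_X=\mathrm{id}$ and, for an end $e=(U_i)$, $g(e):=$ the unique point of $\bigcap_i\mathrm{cl}_{\hat X}(U_i)$. The heart of the argument is that this intersection, a decreasing intersection of non-empty compacta and hence non-empty, is a \emph{singleton}: if it contained two points $p\ne q$, then $0$-dimensionality of the corona together with normality of $\hat X$ would produce disjoint open sets $\tilde A\ni p$ and $\tilde B\ni q$ with disjoint closures covering the corona, whence $\hat X\setminus(\tilde A\cup\tilde B)$ is a compact subset of $X$ and so lies in some $K_N$; but for $i\ge N$ the connected set $U_i$ lies in $X\setminus K_N\subset\tilde A\cup\tilde B$ and therefore falls entirely into $\tilde A$ or into $\tilde B$, contradicting that $\mathrm{cl}_{\hat X}(U_i)$ meets both $p$ and $q$.

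Finally I would verify that $g$ is continuous — again using that the corona admits a neighborhood base of sets clopen in the corona, which pull back to tail neighborhoods matching the $\widehat U_i$ — and that $g$ restricts to the identity on the dense set $X$. Surjectivity is then automatic, since $g(\bar X)$ is compact, hence closed in $\hat X$, and contains the dense subset $X$, so $g(\bar X)=\hat X$. This exhibits $\bar X$ as dominating $\hat X$ and completes the proof. I expect the continuity check at end points and the matching of clopen corona neighborhoods with the component structure to be the most delicate bookkeeping, while the genuinely essential idea is the singleton argument, where connectedness of each $U_i$ meets $0$-dimensionality of the corona.
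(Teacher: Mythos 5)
The paper does not actually prove this theorem: it is stated as a classical result of Freudenthal with a pointer to \cite{Peschke}, and the paper only sketches the construction afterwards (the basis $\mathcal{T}\cup\{\widetilde U_{end}\}$). So there is no in-paper proof to compare against; what you have written is a correct, self-contained proof along the standard lines, and your $\bar X$ coincides with the compactification the paper describes. The two essential points — finitely many components of $X\setminus K$ with non-compact closure (via the compact frontier of a relatively compact neighborhood of $K$ plus local connectedness), and the singleton argument pitting connectedness of the $U_i$ against a clopen partition of the $0$-dimensional corona — are both handled correctly.

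Two small spots deserve one more line each if you write this up. First, for compactness of $\bar X$ you need that $K_N$ together with the union of \emph{all} components of $X\setminus K_N$ with compact closure is relatively compact; this follows because all but finitely many components of $X\setminus K_N$ miss the frontier of your set $V$ and hence lie inside $cl(V)$, but it is not automatic and should be said. Second, continuity of $g$ at an end is cleanest via the observation that a decreasing family of compacta with intersection inside an open $W$ has some member inside $W$, applied to $cl_{\hat X}(U_i)$; this replaces the "matching of clopen corona neighborhoods" bookkeeping you anticipate.
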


\begin{Notation}
Given a $\sigma$-compact, locally compact, connected and locally connected Hausdorff space $(X,\mathcal{T})$. If $U\in \mathcal{T}$, we define the subset $U_{end}:=\{(U_i)\in Ends(X)|U_{i}\subset U$ for some $i\geq 1\}$ and $\widetilde{U}_{end}:=U_{end}\cup U$.
\end{Notation}
The family $\mathcal{T}\cup \{\widetilde{U}_{end}| U\in \mathcal{T}\}$ is a basis for a topology $\mathcal{T}_{end}$ on $X\cup Ends(X)$. The topological space $\bar X:=X\cup Ends(X)$ is a compactification of $X$ called the \textbf{Freudenthal compactification}. The space of ends $Ends(X)=\bar X\setminus X$ is of dimension 0, and $\bar X$ dominates any compactification
$\hat X$ of $X$ whose corona is of dimension $0$. Moreover, the number of ends of $X$ is the supremum of $n_i\geq 0$ where $n_i$ is the number of all mutually disjoint unbounded components of $X\setminus K_i$, for all $i\geq 1$.

Initially, ends were useful as properties of topological groups:
\begin{Theorem}
(Freudenthal) A path connected topological group has at most two ends.
\end{Theorem}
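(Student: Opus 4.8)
The plan is to bound the number of ends by controlling, for each compact set, the number of unbounded complementary components, and then to exploit the group structure through left translations. By the description of the Freudenthal compactification recalled above, the number of ends of $G$ equals the supremum over an exhausting sequence $(K_i)$ of the number of mutually disjoint unbounded components of $G\setminus K_i$. Hence it suffices to show that for every compact connected set $K$ the open set $G\setminus K$ has at most two unbounded components. First I would record that each left translation $L_g\colon x\mapsto gx$ is a homeomorphism of $G$ carrying compact sets to compact sets, so it preserves the end structure and extends to a homeomorphism $\bar L_g$ of $\bar G$ permuting $Ends(G)$; thus $G$ acts on $Ends(G)$ by homeomorphisms.

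Next I would use path-connectedness to show this action is trivial, i.e. that every translation fixes every end. Fix an end $\epsilon$ and consider the orbit map $g\mapsto g\cdot\epsilon=\bar L_g(\epsilon)$ from $G$ to $Ends(G)$. Given $g\in G$, a path from the identity $e$ to $g$ maps under this orbit map to a path from $\epsilon$ to $g\cdot\epsilon$ inside $Ends(G)$; since $Ends(G)$ is totally disconnected (it has dimension $0$), any path in it is constant, so $g\cdot\epsilon=\epsilon$. Consequently $\bar L_g$ fixes each end, and in particular, for any compact $K$ and any two distinct ends, $L_g$ carries the two distinct components of $G\setminus K$ representing them to the two distinct components of $G\setminus gK$ representing the same two ends.

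With the action understood, I would derive a contradiction from the assumption that some $G\setminus K$ has at least three unbounded components $U_1,U_2,\dots,U_n$ with $n\ge 3$; here $K$ may be taken compact and connected using local connectedness. Choose $g\in U_1$ lying outside the compact set $KK^{-1}$, so that $gK$ is a connected set disjoint from $K$ and meeting $U_1$, whence $gK\subseteq U_1$. Then $gK$ is disjoint from $K$ and from every $U_j$ with $j\ge 2$, so the connected set $K\cup U_2\cup\cdots\cup U_n$ lies in a single component $D$ of $G\setminus gK$; in particular $D$ is an unbounded component of $G\setminus gK$ containing the tails of both $U_2$ and $U_3$, hence containing the two distinct ends $\epsilon_2$ and $\epsilon_3$. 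On the other hand, since $U_2$ and $U_3$ are distinct components of $G\setminus K$, their translates $gU_2$ and $gU_3$ are distinct components of $G\setminus gK$, and by the previous paragraph these are exactly the components of $G\setminus gK$ representing $\epsilon_2$ and $\epsilon_3$. Thus $\epsilon_2$ and $\epsilon_3$ are separated by $gK$ into $gU_2\neq gU_3$, yet they both lie in the single component $D$ — a contradiction. Therefore $G\setminus K$ has at most two unbounded components, and $G$ has at most two ends.

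The main obstacle I expect is the topological bookkeeping rather than the combinatorics: one must justify that translations extend to homeomorphisms of the Freudenthal compactification and that the orbit map $g\mapsto g\cdot\epsilon$ is continuous enough for the path argument, so that a path in $G$ really produces a path in $Ends(G)$, and one must arrange the compact separating set $K$ to be connected so that $gK$ is connected. Both points rely on the standing Freudenthal-space hypotheses (local compactness, local connectedness, $\sigma$-compactness) implicit in the setting; once these are in place, the identification of the component of $G\setminus gK$ at the end $\epsilon_i$ with the translate $gU_i$ is the crucial link that converts triviality of the action into the numerical bound.
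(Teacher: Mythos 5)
The paper itself gives no proof of this theorem --- it is quoted as a classical result of Freudenthal (with \cite{Peschke} as the general reference for this section) --- so your argument has to stand on its own. Its combinatorial half is sound: with $K$ compact, connected, and containing $e$ (you should add $e\in K$ explicitly, since otherwise $g\in gK$ fails and the claim that $gK$ meets $U_1$ is unjustified), and $g\in U_1\setminus KK^{-1}$, one does get $gK\subset U_1$, the connected set $K\cup\bigcup_{j\ge 2}U_j$ lies in a single component $D$ of $G\setminus gK$, and comparing $D$ with the distinct translated components $gU_2\ne gU_3$ gives the contradiction --- \emph{provided} every translation fixes every end. The genuine gap is precisely there: the sentence ``a path from $e$ to $g$ maps under this orbit map to a path from $\epsilon$ to $g\cdot\epsilon$'' presupposes that $g\mapsto g\cdot\epsilon$ is continuous, which is where all the topological content of the theorem lives, and you defer it rather than prove it. Knowing that each individual $\bar L_g$ is a homeomorphism of $\bar G$ says nothing about continuity in the variable $g$, and without that continuity the total-disconnectedness argument never starts.

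To close the gap, show the orbit map is locally constant. Let $A$ be an unbounded component of $G\setminus K$ and let $V$ be a compact connected neighbourhood of $e$. If $v\in V$ and $a\in A$ with $va\notin A$, then the connected set $Va$ meets $A$ (at $a$) and also its complement, so by maximality of the component $A$ inside $G\setminus K$ it must meet $K$; hence $a\in V^{-1}K$ and $vA\setminus A\subset VV^{-1}K$, a relatively compact set, and symmetrically $A\setminus vA=v\bigl(v^{-1}A\setminus A\bigr)$ is relatively compact. Thus $vA\equiv A$ modulo compact sets for all $v\in V$, so the stabilizer of the end determined by $A$ contains a neighbourhood of $e$, is therefore an open subgroup, and equals $G$ by connectedness. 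Two remarks follow from this repair. First, it shows the action on $Ends(G)$ is trivial using only connectedness of $G$, so your path-connectedness hypothesis and the device ``paths in a totally disconnected space are constant'' are not actually needed; local constancy plus connectedness already forces $g\cdot\epsilon=\epsilon$. Second, once this is in place the rest of your argument is exactly the standard proof, and the remaining issues (taking the exhausting sequence to consist of connected compacta, as in Lemma \ref{ExhaustingSequenceOfConnectedCompactWithConnecInterior}, and checking that the component of $G\setminus gK$ determined by $\epsilon_j$ is well defined and equals $gU_j$) are routine.
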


\begin{Theorem} (Leo Zippin \cite{Zippin})
If a locally compact, metrizable, connected topological group $G$ is two-ended, then $G$ contains a closed subgroup $T$ isomorphic to the group of reals such that the coset-space $G/T$ is compact; moreover, the space $G$ is the topological product of the axis of reals by a compact connected set homeomorphic to the space $G/T$.
\end{Theorem}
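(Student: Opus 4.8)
The plan is to convert the hypothesis "two ends'' into structural information via the theory of locally compact connected groups, to read off the topological product from an end count, and finally to promote that topological splitting to a genuine group-theoretic one by producing a proper continuous homomorphism $G\to\mathbb{R}$ and splitting it by a one-parameter subgroup.

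First I would record that a connected locally compact group is compactly generated (a compact symmetric neighborhood of the identity generates an open, hence closed, hence full subgroup), so its number of ends is well defined and is a coarse invariant. By the solution of Hilbert's fifth problem (Gleason--Yamabe) together with the Iwasawa--Malcev--Montgomery--Zippin structure theorem, $G$ has a maximal compact subgroup $K$, all maximal compacts are conjugate, and there is a homeomorphism $G\cong\mathbb{R}^n\times K$ for a unique $n\ge 0$. Since $K$ is compact, the projection $\mathbb{R}^n\times K\to\mathbb{R}^n$ is a coarse equivalence, so the number of ends of $G$ equals that of $\mathbb{R}^n$: zero if $n=0$, two if $n=1$, and one if $n\ge 2$. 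Thus two-endedness forces $n=1$, giving $G\cong\mathbb{R}\times K$ as spaces, and, because $G$ and $\mathbb{R}$ are connected, the factor $K$ is compact and connected. This already furnishes the asserted "topological product of the axis of reals by a compact connected set'' once that set is identified with $G/T$.

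Granting a continuous surjective homomorphism $\phi\colon G\to\mathbb{R}$ with compact kernel, I would obtain $T$ as follows. Since $\mathbb{R}$ is a Lie group and $G$ is pro-Lie, $\phi$ factors through a Lie quotient, where $d\phi\colon\mathfrak{g}\to\mathbb{R}$ is onto; choosing $X$ with $d\phi(X)=1$ and setting $\sigma(t)=\exp(tX)$ produces a one-parameter subgroup with $\phi(\sigma(t))=t$, which lifts to $G$ (one-parameter subgroups lift along the Gleason--Yamabe quotients). Then $\sigma$ is injective and proper, so its image $T$ is a closed subgroup isomorphic to $\mathbb{R}$. Because every $g$ satisfies $g\,\sigma(\phi(g))^{-1}\in\ker\phi$, we get $G=T\cdot\ker\phi$ with $T\cap\ker\phi=\{e\}$, so the multiplication map $T\times\ker\phi\to G$ is a proper continuous bijection, hence a homeomorphism. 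Consequently $G/T\cong\ker\phi$ is compact and connected and $G\cong\mathbb{R}\times(G/T)$, which is exactly the claimed decomposition (equivalently, the principal $\mathbb{R}$-bundle $G\to G/T$ is trivial since $\mathbb{R}$ is contractible).

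The hard part is producing $\phi$, i.e. converting the topological fact $n=1$ into this algebraic splitting. Passing to a Lie quotient $G/N$ (with $N$ compact normal, so that $G/N$ is again two-ended, quotients by compact normal subgroups being coarse equivalences), I would first show the Levi factor of $G/N$ is compact: a noncompact simple factor would contribute a Riemannian symmetric space of dimension $\ge 2$ to $G/K$, forcing $n\ge 2$. Hence all noncompactness lives in the solvable radical $R$, which is connected solvable and, by the $n=1$ count, homeomorphic to $\mathbb{R}\times\mathbb{T}^q$, i.e. has a single noncompact dimension; from this one extracts a closed cocompact one-parameter subgroup of $R$, equivalently a continuous homomorphism onto $\mathbb{R}$ with compact kernel. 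The two delicate points are (i) ensuring the chosen one-parameter subgroup is closed rather than winding densely into the compact torus directions, which requires taking its generator transverse to the Lie algebra of every maximal torus, and (ii) carrying the construction back through the Gleason--Yamabe projective limit so that the homomorphism and the compactness of its kernel survive the passage from the Lie quotient to $G$. I expect (ii) to be the principal obstacle, since the inverse-limit bookkeeping must be performed uniformly across the approximating Lie groups.
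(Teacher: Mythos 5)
This statement is quoted in the paper as a classical theorem of Zippin (1950) with a citation; the paper contains no proof of it, so there is no internal argument to compare yours against. Judged on its own terms, your proposal is a plausible modern outline but it is not a complete proof: the entire content of Zippin's theorem is the production of the closed subgroup $T\cong\mathbb{R}$ with $G/T$ compact, and that is exactly the step you leave as a sketch. The first half (Gleason--Yamabe plus the Iwasawa--Malcev splitting $G\cong\mathbb{R}^n\times K$, then reading off $n=1$ from the end count) is sound. But the second half is conditional on a proper surjective homomorphism $\phi\colon G\to\mathbb{R}$ with compact kernel, and your construction of $\phi$ has two unresolved points that you yourself flag: choosing a one-parameter subgroup of the solvable radical that is closed rather than winding into the torus directions, and carrying the construction through the projective limit of Lie quotients. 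There is also an unexamined compatibility issue you do not mention: $\phi$ must kill the compact Levi factor while remaining surjective with compact kernel on the radical, which requires knowing that the intersection of the closed commutator subgroup with the radical is compact. As written, the argument establishes only the topological product statement, not the existence of $T$.

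The gap is avoidable, and more cheaply than by the route you chose. Iwasawa's theorem (the same structure theorem you already invoke) gives not merely a homeomorphism $G\cong\mathbb{R}^n\times K$ but closed one-parameter subgroups $H_1,\dots,H_n$, each isomorphic to $\mathbb{R}$, such that $(k,h_1,\dots,h_n)\mapsto kh_1\cdots h_n$ is a homeomorphism of $K\times H_1\times\cdots\times H_n$ onto $G$. Once your end count forces $n=1$, set $T:=H_1$: it is closed and isomorphic to $\mathbb{R}$ by the theorem itself, $G=K\cdot T$ shows $G/T$ is the continuous image of the compact connected set $K$, hence compact and connected, and the homeomorphism $K\times T\to G$ together with $K\cap T=\{e\}$ identifies $G$ with the product of $\mathbb{R}$ and $G/T\cong K$. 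This eliminates the homomorphism $\phi$, the Levi/radical analysis, the density worry, and the inverse-limit bookkeeping entirely. (Separately, your closing remark that the bundle $G\to G/T$ is trivial because $\mathbb{R}$ is contractible presupposes local triviality of that quotient map, which for non-Lie $G$ is itself a theorem requiring a local cross-section; with the Iwasawa decomposition in hand you do not need it.)
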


\begin{Theorem}
(H. Hopf) Let $G$ be a finitely generated discrete group acting on a space $X$ by covering transformations. Suppose the orbit space $B:=X/G$ is compact. Then (i) and (ii), below, hold.\\
(i) The space of ends of $X$ has 0, 1 or 2 (discrete) elements or is a Cantor space.\\ 
(ii) If $G$ also acts on $Y$ satisfying the hypotheses above, then $X$ and $Y$ have
homeomorphic end spaces.
\end{Theorem}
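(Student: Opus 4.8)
The plan is to reduce the entire statement to the finitely generated group $G$ itself, equipped with the word metric of a finite generating set, together with its Cayley graph $\Gamma=\Gamma(G)$. First I would metrize $X$: since the action is by covering transformations with compact quotient $B=X/G$, the space $X$ is a Freundenthal space, and one can lift a metric from the compact base $B$ so that $G$ acts properly discontinuously and cocompactly by isometries. The \v{S}varc--Milnor Lemma then produces a coarse equivalence between $X$ and $G$, and likewise between $Y$ and $G$. Because the space of ends is a coarse invariant (in the language developed here, an invariant of the associated scaled Boolean algebra), this at once yields $Ends(X)\cong Ends(G)\cong Ends(Y)$, which is exactly part (ii), and it reduces part (i) to describing $Ends(G)=Ends(\Gamma)$.

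For part (i) I would first record the purely topological facts: since $\Gamma$ is a connected locally finite graph, $Ends(\Gamma)$ is compact, metrizable, and totally disconnected (zero-dimensional), as already noted for the Freundenthal corona. By Brouwer's characterization, a nonempty compact metrizable totally disconnected space is a Cantor set precisely when it is perfect, i.e.\ has no isolated points. Hence the theorem reduces to two claims: (a) $Ends(\Gamma)$ cannot be finite of cardinality $\ge 3$, and (b) if $Ends(\Gamma)$ is infinite it has no isolated points.

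Both claims I would derive from a single device, the homogeneity of $\Gamma$ under the cocompact, vertex-transitive $G$-action, recalling that the number of ends is the supremum over finite $K\subset\Gamma$ of the number of infinite components of $\Gamma\setminus K$. For (a): if there were exactly $n$ ends with $3\le n<\infty$, choose a finite $K$ for which $\Gamma\setminus K$ has infinite components $C_1,\dots,C_n$, and pick $g\in G$ translating $K$ deep into $C_1$, so that $gK\subset C_1$. Then $C_2,\dots,C_n$ survive as $n-1$ infinite components disjoint from $gK$, while inside $C_1$ the translate $gK$ cuts off a further $n-1$ infinite components $gC_2,\dots,gC_n$, because $\Gamma\setminus gK$ has the same component structure as $\Gamma\setminus K$ and the connected complement $\Gamma\setminus C_1$ lies in a single component of $\Gamma\setminus gK$. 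Thus $\Gamma\setminus(K\cup gK)$ has at least $2(n-1)$ infinite components, forcing $2n-2\le n$, i.e.\ $n\le 2$, a contradiction. For (b): given an end $\omega$ with neighborhood component $U$, I claim $U$ contains a second end; otherwise $U$ is one-ended, and translating a separating set $gK$ that realizes $\ge 3$ complementary components deep inside $U$ would place at least two infinite components inside $U\setminus gK$, contradicting one-endedness of $U$. Hence no neighborhood isolates $\omega$, so $Ends(\Gamma)$ is perfect.

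The main obstacle is the translation argument underlying (a) and (b): one must choose $g$ so that $gK$ lies strictly inside the chosen component and then verify the bookkeeping of how infinite components split, which is precisely where cocompactness of the action is essential, since it guarantees translates of a fixed finite separator arbitrarily deep in any end. A secondary technical point is the metrization of $X$ and the verification of the \v{S}varc--Milnor hypotheses; once these are in place, parts (i) and (ii) follow as above, with Brouwer's theorem supplying the final identification of the infinite-ended case with the Cantor set.
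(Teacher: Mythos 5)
The paper does not prove this theorem at all: it is quoted in the Introduction as a classical result of Hopf, with a pointer to \cite{Peschke} for proofs of the theorems in that section. So there is no ``paper's proof'' to compare against; what you have written is the standard textbook argument, and it is essentially correct. Part (ii) via \v{S}varc--Milnor and coarse invariance of ends is exactly the route the paper's own machinery (Proposition \ref{ScaledActionsInduceLargeScaleStructure} and Theorem \ref{Svarc-MilnorForScaledGroups}) is designed to support. For part (i), the reduction to Brouwer's characterization and the translation argument for ``$3\le n<\infty$ is impossible'' are the classical Freudenthal--Hopf proof, and your bookkeeping ($2(n-1)\le n$, hence $n\le 2$) is right, provided you enlarge $K$ to a finite \emph{connected} set so that $\Gamma\setminus C_1$ is genuinely connected and lands in a single component of $\Gamma\setminus gK$.

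Two small points to tighten. First, in the perfectness argument the phrase ``contradicting one-endedness of $U$'' skips a step: producing two infinite components of $U\setminus gK'$ only yields two distinct ends in the neighborhood of $\omega$ once you know that every infinite component of the complement of a finite set in a connected locally finite graph contains a ray (K\"onig's lemma), hence carries at least one end; with that observation the two components give two distinct ends inside the basic neighborhood determined by $U$, so $\omega$ is not isolated. Second, the statement says only ``a space $X$''; to metrize $X$ and invoke \v{S}varc--Milnor you need the implicit standing hypotheses (e.g.\ $X$ a Freundenthal space in the sense of Definition \ref{DefFreundenthalspace}, with $B$ metrizable, as in Hopf's original setting); you flag this yourself, and it is the customary reading of the theorem, but it should be stated rather than assumed silently.
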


Conclusion (ii) suggests to regard the space of ends of $X$ as an invariant of the group $G$ itself:
\begin{Definition}
Let $p:X \to B$ be a covering map with compact base $B$ and the group of covering transformations $G$. The \textbf{end space} of $G$ is
$$Ends(G):= Ends(X).$$
\end{Definition}

When applied to a Cayley graph of $G$, it gives the standard definition of ends of finitely generated groups (see \cite{DK}, p.295). See \cite{Geog} for basic results in this theory and see \cite{Grom} for more general facts in coarse geometry related to groups. \cite{MM} contains interesting results for ends of finitely generated groups.

Stone \cite{Stone} assigns to each Boolean algebra $B$ a totally disconnected compact Hausdorff space $X_B$ called the Stone's space associated to $B$; and conversely, every Stone's space $X$ (totally disconnected compact Hausdorff space) is assigned a Boolean algebra $S_X$ that consists of all clopen subsets of $X$. Stone's representation theorems assure us that two Boolean algebras are isomorphic if and only if their corresponding Stone's spaces are homeomorphic; and that two Stone's spaces are homeomorphic if and only if their Boolean algebras of their clopen subsets are isomorphic. See \cite{HR} for basic results on Stone Duality.

Realizing that the space of ends of a finitely generated group is a Stone's space, and that its Boolean algebra of all clopen subsets is isomorphic to the quotient Boolean algebra of the Boolean algebra consists of all almost right invariant subsets by the ideal of finite subsets ($A\subset G$ is called almost  right invariant if the symmetric difference $A\Delta A\cdot g$ is finite for all $g\in G$); E. Specker \cite{Sp} defined the space of ends of an arbitrary group as the Stone's space of the quotient Boolean algebra of the Boolean algebra of all almost right invariant subsets by the ideal of finite subsets.

Adopting Specker's approach, W. Dicks and M. J. Dunwoody 
\cite{DD} consider ends of non-finitely generated groups. In particular, they proved the following result that is a generalization of the famous theorem of Stallings \cite{St}:

\begin{Theorem}
A group $G$ has infinitely many ends if and only if 
one of the following conditions holds:\\
(i) $G$ is countably infinite and locally finite,\\
(ii) $G$ can be expressed as an amalgamated free product $A\ast_CB$ or an HNN extension $A\ast_C$, where $C$ is a finite subgroup of $A$ and $B$ such that $[A : C]\ge 3$ and $[B : C]\ge 2$.
\end{Theorem}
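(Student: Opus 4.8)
The plan is to prove both implications through the Stone-space description of ends recalled above. Writing $\mathcal{B}(G)$ for the Boolean algebra of almost right invariant subsets of $G$ and $\mathcal{F}$ for the ideal of finite subsets, $Ends(G)$ is the Stone space of $\mathcal{B}(G)/\mathcal{F}$, so ``infinitely many ends'' means precisely that this Boolean algebra is infinite (equivalently, by the $0,1,2,\infty$ classification, that the Stone space is a Cantor set). The two directions have very different characters: the ``if'' direction is a direct construction, while the ``only if'' direction rests on building a tree on which $G$ acts, and I expect that tree construction to be the main obstacle.

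For the ``if'' direction I would treat the two cases separately. Under (i), with $G=\bigcup_n F_n$ an increasing union of finite subgroups, I would show $\mathcal{B}(G)/\mathcal{F}$ is atomless: given any almost invariant $A$ with both $A$ and $A^*=G\setminus A$ infinite, choosing a subgroup $F_n$ large relative to the finite coboundaries $A\Delta Ag$ lets one partition $A$ into two infinite almost invariant pieces, so no atoms exist. Since $G$ is countable, an atomless Boolean algebra has the Cantor set as its Stone space, giving infinitely many ends. Under (ii) I would pass to the Bass--Serre tree $T$ of the splitting, whose edge stabilizers are conjugates of the finite group $C$; the index conditions $[A:C]\ge 3$ and $[B:C]\ge 2$ (respectively the HNN analogue) force $T$ to be an infinite tree with vertices of valence $\ge 3$ and no leaves, hence with a Cantor set of ends. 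Because the edge groups are finite, a standard Bass--Serre computation then shows $e(G)=\infty$.

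The substance lies in the ``only if'' direction. Assuming $e(G)=\infty$, the first step is to produce a \emph{thin cut}: a proper almost invariant set $A$, with $A$ and $A^*$ both infinite, whose coboundary is finite. The existence of one such $A$ already follows from $e(G)>1$, and $e(G)=\infty$ guarantees an abundant, non-degenerate supply. The central step is Dunwoody's structure-tree construction: from the $G$-poset of cuts one must extract a $G$-invariant \emph{nested} family and assemble it into a tree $T$ carrying a $G$-action whose edges are the orbit of a chosen nested cut. The key lemma is that the stabilizer of such a cut is commensurable with its finite coboundary data, so the edge groups of $T$ are finite. I expect the nestedness/accessibility argument---replacing an arbitrary cut by a nested $G$-family without sacrificing finiteness of coboundaries---to be the hardest and most technical point, and it is here that the purely combinatorial theory of the Boolean ring of finite-coboundary sets does the heavy lifting.

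Finally I would run Bass--Serre theory on the action of $G$ on $T$. If the action fixes neither a vertex nor an end, it yields a splitting of $G$ over the finite edge group $C$, and the branching of $T$ forced by $e(G)=\infty$ guarantees the index conditions of (ii). The degenerate alternative, in which the action is forced to fix a vertex or an end, is exactly the locally finite situation: here every finitely generated subgroup is finite, so no nondegenerate action on such a tree survives, and combining $e(G)=\infty$ with the fact that an uncountable locally finite group is one-ended pins $G$ down to being countably infinite, landing in case (i). Assembling these cases completes the equivalence.
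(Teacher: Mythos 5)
The paper itself offers no proof of this statement: it is quoted verbatim from Dicks and Dunwoody \cite{DD} as background for the Specker-style definition of ends of arbitrary (not necessarily finitely generated) groups, so there is no argument in the text to compare yours against. Judged on its own terms, your outline correctly identifies the ingredients of the known proof --- the Stone-space reading of $e(G)=\infty$, the atomless-algebra argument for countable locally finite groups, the Bass--Serre tree of the splitting in case (ii), and Dunwoody's structure-tree machinery for the converse --- and the ``if'' direction as you sketch it is essentially complete modulo routine verification.

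The genuine gap is that the entire mathematical content of the ``only if'' direction is concentrated in the step you explicitly defer: extracting a $G$-invariant \emph{nested} family of cuts and assembling it into a tree with finite edge stabilizers. Naming this as ``Dunwoody's structure-tree construction'' does not discharge it; for arbitrary groups (which is the setting of this theorem, in contrast to Stallings' original result) the argument of \cite{DD} runs through the Almost Stability Theorem rather than the finitely-generated cut-poset argument you gesture at, precisely because the notion of ``finite coboundary'' you invoke presupposes a finite generating set. An almost right invariant subset of an infinitely generated group need not have finite coboundary with respect to any Cayley graph, and this is why the locally finite case (i) appears as a separate alternative rather than as a degenerate instance of (ii). Your final paragraph also asserts without proof that an uncountable locally finite group is one-ended; this is true but is itself a nontrivial fact about the Boolean algebra of almost invariant sets that your argument needs and should either prove or cite. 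As written, the proposal is a correct roadmap of the literature rather than a proof.
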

Yves Cornulier \cite{Corn} has also studied the space of ends of infinitely generated groups.

When a group $G$ is equipped with a left-invariant proper metric, its almost right invariant subsets are exactly the coarsely clopen subsets of $G$ equipped with the large scale structure induced by its metric. The concept of coarsely clopen subsets is defined for any large scale space $X$. Furthermore, the family of all coarsely clopen subsets $\mathcal{CC}(X)$ of a given large scale space $X$ is a Boolean algebra. Consequently, it is natural to define the space of ends $Ends(X)$ of a coarsely connected large scale space $X$ as the Stone's space of the to the quotient Boolean algebra of $\mathcal{CC}(X)$ by the ideal of all bounded subsets. This is done in \cite{MRD}.
In this paper we generalize all above approaches to ends using ideas from Linear Algebra, namely analogs of eigenvectors of linear operators.

The authors declare that all data supporting the findings of this study are available within the article.

\section{Scaled spaces}

In order to discuss behaviour of spaces at infinity one needs a concept (a measure) of approaching infinity. The simplest way to do so is to list subsets of $X$ that are bounded and that list should satisfy certain basic properties. This leads us to the next concept.

\begin{Definition}
A \textbf{scaled space} is a pair $(X,S_X)$ of a set and a \textbf{scale} $S_X$ in $X$, i.e. a family $S_X$ of subsets of $X$ that is closed under finite unions and contains the empty set. We do not assume that $X=\bigcup S_X$, so $S_X$ may consist of the empty set only. A subset $A\subset X$ is called \textbf{bounded} if there exists $B\in S_X$ such that $A\subset B$; otherwise, $A\subset X$ is called \textbf{unbounded}. Given a scaled space $(X,S_X)$ and a subset $Y\subset X$, the family $B_Y:=\{K\cap Y:K\in S_X\}$ is a scale in $Y$. The pair $(Y,B_Y)$ is a \textbf{scaled subspace} of $(X,S_X)$.
\end{Definition}

\begin{Definition}
$C\equiv D$ mod $S_X$ means existence of $B_1,B_2\in S_X$ such that $C\cup B_1=D\cup B_2$. Equivalently, there is $B\in S_X$ such that
$C\setminus B=D\setminus B$ ($B:=B_1\cup B_2$ works).
\end{Definition}

If a scale $S_X$ is a \textbf{bornology}, i.e. every subset of $B\in S_X$ belongs to $S_X$, then
$C\equiv D$ mod $S_X$
is equivalent to the symmetric difference $C\Delta D$ belonging to $S_X$. For arbitrary scaled spaces, it is equivalent to the symmetric difference $C\Delta D$ being bounded. 

\begin{Observation}
The reason we do not assume each scale $S_X$ is a bornology is that when discussing topologies induced by certain families containing $S_X$ would quite often lead to discrete topologies and that is not what we are interested in.
\end{Observation}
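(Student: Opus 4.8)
The plan is to promote this motivational remark to a precise statement and then prove that statement, which at the same time explains the authors' design choice. First I would pin down what ``topology induced by a family containing $S_X$'' means: in every construction in this paper a topology is attached to a scaled Boolean algebra $(X,S_X,BA_X)$ so that each member of $BA_X$ (or its trace on $X$) becomes open, since $BA_X$ is precisely the list of subsets whose behaviour at infinity is being recorded. Because every admissible $BA_X$ satisfies $S_X\subseteq BA_X$ by definition, it suffices to understand when $S_X$ alone forces the singletons of $X$ into $BA_X$: once singletons are open, the induced topology on $X$ is discrete.

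The heart of the matter is a short combinatorial implication. Assume $S_X$ is a bornology and, in the non-degenerate case, that $X=\bigcup S_X$ (which holds for the metric and coarse examples motivating the paper). Given $x\in X$, coverage yields $B\in S_X$ with $\{x\}\subseteq B$, and closure of a bornology under subsets gives $\{x\}\in S_X$. Hence \emph{every} singleton lies in $S_X$, and therefore in $BA_X$ for every admissible choice of $BA_X$; so each $\{x\}$ is open and the topology on $X$ is discrete. The decisive point is that the inclusion $S_X\subseteq BA_X$ is mandatory: no matter how aggressively one trims $2^X$ down to a sub-Boolean algebra of eigensets, the singletons cannot be removed, so the discreteness is unavoidable and the end-theoretic topology records nothing interesting about $X$ itself.

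I would then contrast the non-bornology case to show that the phenomenon is genuinely avoided there. When $S_X$ is merely closed under finite unions, boundedness of $\{x\}$ (that is, $\{x\}\subseteq B$ for some $B\in S_X$) no longer forces $\{x\}\in S_X$; singletons need not belong to $S_X$, so one retains the freedom to trim $BA_X$ to a sub-Boolean algebra that omits them, and the induced topology on $X$ can remain as coarse as in the examples of the previous sections. This dichotomy -- a bornology forces discreteness, while a non-bornology permits a coarse topology -- is exactly the content of the Observation.

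The principal obstacle, and the reason this is an Observation rather than a theorem, is that it is stated informally: ``quite often'' is not a sharp quantifier, and the conclusion depends both on the non-degeneracy assumption $X=\bigcup S_X$ and on the specific family chosen to induce the topology. For this reason I would present the implication ``bornology $\Rightarrow$ bounded singletons $\Rightarrow$ singletons in every $BA_X$ $\Rightarrow$ discreteness'' as the governing mechanism, state explicitly the two hypotheses under which it is rigorous (coverage of $X$ by $S_X$, and openness of the generating family in the induced topology), and note that both hold in the constructions developed later, so the Observation is justified precisely where it is applied.
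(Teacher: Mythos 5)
The paper offers no proof of this Observation at all: it is a purely motivational remark, stated without argument, explaining a design choice in the definition of a scale. Your proposal, therefore, cannot coincide with or diverge from a paper proof; what it does is promote the remark to a precise claim and verify it, and that verification is correct. The mechanism you isolate is exactly the right one: if $S_X$ is a bornology and $X=\bigcup S_X$, then each singleton $\{x\}$ is contained in some $B\in S_X$ and hence, by closure under subsets, lies in $S_X$ itself; since every admissible Boolean algebra satisfies $S_X\subseteq BA_X$, every singleton is basic open in the topology the paper later attaches to a scaled Boolean algebra (the one in which elements of $BA_X$ are clopen), and no amount of trimming to eigensets can remove them. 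This matches the paper's actual practice: where the induced topology on $X$ is meant to interact with a pre-existing topology, the scales are deliberately not bornologies --- $S_X$ is taken to be the \emph{open} bounded subsets of a geodesic space in Section 6, and $B_G$ the \emph{pre-compact open} subsets of a locally compact group --- whereas the bornology examples (all finite subsets of a group) are ones where a discrete topology on $X$ is harmless because only the behaviour at infinity matters. Two small points of care, both of which you handle correctly: the coverage hypothesis $X=\bigcup S_X$ is genuinely needed (points at infinity have unbounded singletons, so the implication fails without it, consistent with the paper not assuming coverage in general); and your dichotomy is rightly stated as a permission rather than a guarantee in the non-bornology direction, since taking $BA_X=2^X$ yields a discrete induced topology regardless of the scale. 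With those caveats made explicit, your reconstruction is a faithful and rigorous account of what the authors intended but left informal.
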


\begin{Lemma}
If $C\equiv D$ mod $S_X$, then $X\setminus C\equiv X\setminus D$ mod $S_X$.
\end{Lemma}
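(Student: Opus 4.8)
The plan is to work entirely through the second, equivalent formulation of $\equiv$ mod $S_X$ recorded in the Definition, namely that $C\equiv D$ mod $S_X$ holds precisely when there is a single $B\in S_X$ with $C\setminus B = D\setminus B$. This formulation is the convenient one for taking complements because it involves one bounded set rather than two, and it avoids any reference to symmetric differences. So I would start by fixing $B\in S_X$ witnessing $C\setminus B = D\setminus B$, and then claim that the very same $B$ witnesses the conclusion, i.e.\ that $(X\setminus C)\setminus B = (X\setminus D)\setminus B$. Establishing this one set equality finishes the proof immediately, again by the Definition.

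To prove that equality I would reduce it to the assertion $C\cup B = D\cup B$ by means of two elementary identities. First, the complement identity $(X\setminus C)\setminus B = X\setminus(C\cup B)$ (and likewise with $D$ in place of $C$) shows it is enough to check $C\cup B = D\cup B$. Second, the absorption identity $C\cup B = (C\setminus B)\cup B$, valid since $C\cap B\subseteq B$, together with its analogue for $D$, lets me substitute the hypothesis: $C\cup B = (C\setminus B)\cup B = (D\setminus B)\cup B = D\cup B$. This is exactly what was needed.

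There is essentially no genuine obstacle here; the statement is a purely set-theoretic consequence of the definition. The only point requiring a little care is that $S_X$ is assumed merely closed under finite unions and need \emph{not} be a bornology, so one is not entitled to argue by saying ``$C\Delta D$ is bounded, hence $(X\setminus C)\Delta(X\setminus D)=C\Delta D$ is bounded.'' Routing the argument through the characterization $C\setminus B = D\setminus B$ sidesteps this concern entirely, since we reuse one and the same bounded set $B$ for both $C\equiv D$ and $X\setminus C\equiv X\setminus D$ and never need to invoke closure under taking subsets.
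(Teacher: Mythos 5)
Your proof is correct and takes essentially the same route as the paper: both arguments reuse the single witness $B$ with $C\setminus B=D\setminus B$ to verify the conclusion directly from the definition (the paper checks $(X\setminus C)\cup B=(X\setminus D)\cup B$, you check the equivalent $(X\setminus C)\setminus B=(X\setminus D)\setminus B$). Your closing remark about not assuming $S_X$ is a bornology is a sensible precaution, though not a substantive difference.
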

\begin{proof}
Suppose $C\setminus B=D\setminus B$  for some $B\in S_X$.
Then $(X\setminus C)\cup B=(X\setminus D)\cup B$.
\end{proof}

\begin{Definition}
A \textbf{connectivity structure} $C_X$ on a set $X$ is a family of subsets of $X$ (called \textbf{connected}) containing all singletons with the property that any subset $Y$ of $X$
must belong to $C_X$ if every two points $x,y$ of $Y$ can be joined by a chain $C_i\in C_X$, $1\leq i\leq n$, i.e. $x\in C_1$, $y\in C_n$,
and $C_i\cap C_{i+1}\ne\emptyset$ for each $i < n$.
\end{Definition}

\begin{Proposition}
Suppose $(X,C_X)$ is a set equipped with a connectivity structure and $Y$ is a subset of $X$. Each point $y\in Y$ belongs to a maximal connected subset of $Y$ called the $C_X$-component of $Y$. The family of all $C_X$-components of $Y$ is a partition of $Y$.
\end{Proposition}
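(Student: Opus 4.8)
The plan is to define, for each point $y\in Y$, the set $W_y$ to be the union of all connected subsets of $Y$ containing $y$ (where a \emph{connected subset of $Y$} means a set $Z\subseteq Y$ with $Z\in C_X$), and then to show that $W_y$ is precisely the $C_X$-component of $y$. First I would record two immediate facts: the singleton $\{y\}$ lies in $C_X$ by the first requirement in the definition of a connectivity structure, so $\{y\}$ is a connected subset of $Y$ containing $y$ and hence $y\in W_y$; and since $W_y$ is a union of subsets of $Y$, we have $W_y\subseteq Y$.

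The crucial step, and the one place where the definition really does work, is to verify that $W_y$ is itself connected, i.e. $W_y\in C_X$. Here I would invoke the chain-closure property. Given any two points $a,b\in W_y$, there are connected subsets $Z_a,Z_b\subseteq Y$, each containing $y$, with $a\in Z_a$ and $b\in Z_b$. Then the two-term chain $Z_a,Z_b$ joins $a$ to $b$ in the sense of the definition: both members lie in $C_X$, the endpoints satisfy $a\in Z_a$ and $b\in Z_b$, and $Z_a\cap Z_b\ni y$ is nonempty. Thus every pair of points of $W_y$ can be joined by a chain in $C_X$, and the defining property of $C_X$ forces $W_y\in C_X$. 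By construction any connected subset of $Y$ containing $y$ is one of the sets in the union, hence contained in $W_y$, so $W_y$ is the unique maximal connected subset of $Y$ containing $y$; this justifies the name $C_X$-component and the assertion that every $y\in Y$ belongs to one.

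It then remains to show that $\{W_y:y\in Y\}$ is a partition. Covering of $Y$ is immediate from $y\in W_y$. For disjointness I would first prove the consistency statement that $z\in W_y$ implies $W_z=W_y$: since $W_y$ is a connected subset of $Y$ containing $z$, maximality of $W_z$ gives $W_y\subseteq W_z$, whence $y\in W_z$, and the symmetric argument gives $W_z\subseteq W_y$. Consequently, if two components $W_x$ and $W_y$ share a point $z$, then $W_x=W_z=W_y$, so distinct components are disjoint. Combined with the covering property, this shows the $C_X$-components form a partition of $Y$. I expect the only genuine obstacle to be the connectivity of $W_y$ in the second paragraph; once the chain axiom is applied there, maximality, covering, and disjointness are all routine consequences.
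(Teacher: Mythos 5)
Your proof is correct, but it takes a genuinely different route from the paper's. The paper applies Zorn's Lemma to the poset $\mathcal{A}_y$ of connected subsets of $Y$ containing $y$, checking that the union of a chain in $\mathcal{A}_y$ is again in $\mathcal{A}_y$, and concludes that a maximal element exists; it stops there and does not explicitly verify the partition claim. You instead construct the component outright as $W_y$, the union of \emph{all} connected subsets of $Y$ containing $y$, and verify $W_y\in C_X$ directly via the two-term chain $Z_a, Z_b$ meeting at $y$ --- which is exactly the verification the paper also needs (but leaves implicit) for its chain upper bounds. Your approach buys three things: it avoids the appeal to Zorn's Lemma entirely, it gives an explicit description of the component rather than a bare existence statement, and it actually proves the uniqueness/consistency statement ($z\in W_y\Rightarrow W_z=W_y$) needed for the partition assertion, which the paper's proof omits. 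The paper's version is shorter but, as written, only establishes existence of \emph{a} maximal connected set through each point. Both arguments hinge on the same observation that the chain axiom is applied with chains of length two whose members need not be nested, so neither is more demanding on the definition of a connectivity structure than the other.
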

\begin{proof}
Let $y\in Y$ and $\mathcal{A}_y:=\{A\subset Y:y\in A  \text{ and } A\in C_X\}$. Clearly, $\mathcal{A}_y\neq\emptyset$ as $\{y\}\in\mathcal{A}_y$. Let $\mathcal{C}_y$ be a chain in $\mathcal{A}_y$, then $\bigcup\limits_{A\in \mathcal{C}_y}A$ is an upper bound of $\mathcal{C}_y$ and $\bigcup\limits_{A\in \mathcal{C}_y}A\in \mathcal{A}_y$. By Zorn's Lemma, $\mathcal{A}_y$ has a maximal.
\end{proof}

\begin{Definition}\label{SBAandCompactAtInfinity}
A \textbf{scaled Boolean algebra} is a triple $(X,S_X,BA_X)$ consisting of a set, a scale $S_X$, and a Boolean algebra $BA_X$ of subsets of $X$ such that $S_X\subset BA_X$.

Given a scaled Boolean algebra $(X,S_X,BA_X)$ and $Y\subset X$. The scaled Boolean algebra $(Y,B_Y,BA_Y)$ is a \textbf{scaled sub-Boolean algebra} of $(X,S_X,BA_X)$ if:\\
$1.$ $(Y,B_Y)$ is a scaled subspace of $(X,S_X)$.\\
$2.$ $\{A\cap Y\mid A\in BA_X\}=BA_Y$.
\end{Definition}

\begin{Proposition}\label{BAInducedByConnectivity}
Given a scaled space $(X,S_X,C_X)$ equipped with a connectivity structure.
If for each bounded set $B$ the union of bounded $C_X$-components
of $X\setminus B$ is bounded, then
the family $BA_X$ of all subsets of $X$ each equivalent to a union of components of $X\setminus B$ for some bounded $B$ is a $S_X$-scaled Boolean algebra.
\end{Proposition}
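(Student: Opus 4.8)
The plan is to verify the two requirements of Definition \ref{SBAandCompactAtInfinity}: that $BA_X$ is a Boolean algebra of subsets of $X$ (closed under complement and finite unions, containing $\emptyset$ and $X$; closure under finite intersections then follows by De Morgan), and that $S_X\subseteq BA_X$. I would dispose of the trivial points first. Every $B\in S_X$ is bounded, hence $B\equiv\emptyset$ mod $S_X$, and $\emptyset$ is the empty union of components of $X\setminus\emptyset$; thus $S_X\subseteq BA_X$, and in particular $\emptyset\in BA_X$. Likewise $X\equiv X\setminus B$ mod $S_X$ for any bounded $B$, and $X\setminus B$ is the union of all components of $X\setminus B$, so $X\in BA_X$.

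Next I would handle complements using the earlier Lemma, which says that $C\equiv D$ mod $S_X$ implies $X\setminus C\equiv X\setminus D$ mod $S_X$. Suppose $A\equiv U$ mod $S_X$, where $U$ is the union of a family $\mathcal{V}$ of components of $X\setminus B$. Then $X\setminus A\equiv X\setminus U$ mod $S_X$. Writing $U'$ for the union of the components of $X\setminus B$ that do not belong to $\mathcal{V}$, we have $X\setminus U=B\cup U'$, and since $B$ is bounded, $X\setminus A\equiv U'$ mod $S_X$. As $U'$ is itself a union of components of $X\setminus B$, we conclude $X\setminus A\in BA_X$.

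The substantive step is closure under finite unions, and the hard part is that the two given elements are described relative to \emph{different} bounded sets. Suppose $A_1\equiv U_1$ and $A_2\equiv U_2$ mod $S_X$, where $U_i$ is a union of components of $X\setminus B_i$. I would pass to the common set $B:=B_1\cup B_2$, which is bounded because the bounded sets are closed under finite unions. Since $B_i\subseteq B$, every component of $X\setminus B$ is a connected subset of $X\setminus B_i$ and therefore lies in a unique component of $X\setminus B_i$; using this refinement I would check that $U_i\setminus B$ is exactly the union of those components of $X\setminus B$ that are contained in $U_i$, so $U_i\setminus B$ is again a union of components, now of the single set $X\setminus B$. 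As $B$ is bounded, $U_i\equiv U_i\setminus B$ mod $S_X$ (with a witness in $S_X$ containing $B$), whence $A_i\equiv U_i\setminus B$ mod $S_X$ with both indices now referring to the same $X\setminus B$. A routine check, combining the two witnesses, shows that $\equiv$ mod $S_X$ is preserved under unions, so $A_1\cup A_2\equiv (U_1\setminus B)\cup(U_2\setminus B)$ mod $S_X$, which is a union of components of $X\setminus B$. Thus $A_1\cup A_2\in BA_X$, completing the verification that $BA_X$ is a Boolean algebra.

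I expect the common-refinement argument for unions to be the main obstacle; everything else is bookkeeping with the relation $\equiv$ mod $S_X$. I should remark that the standing hypothesis, that the union of the bounded components of $X\setminus B$ is bounded for every bounded $B$, does not seem to be forced by the closure properties above; rather, its natural role is to supply a normal form. Namely, after discarding the union of the bounded components (which is bounded by hypothesis), every element of $BA_X$ is equivalent mod $S_X$ to a union of \emph{unbounded} components of some $X\setminus B$. I would confirm that this is where the hypothesis enters, since it is what makes the component description genuinely a statement about behaviour at infinity and is presumably what the subsequent construction of ends requires.
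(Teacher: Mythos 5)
Your proof is correct and follows essentially the same route as the paper's: both pass to the common bounded set $B:=B_1\cup B_2$ and use the fact that each component of $X\setminus B$ lies in a unique component of $X\setminus B_i$, the only cosmetic difference being that the paper obtains intersections directly (via components contained in both $C_1$ and $C_2$) while you get them from De Morgan. Your closing remark is also accurate: the paper's own proof never invokes the hypothesis that bounded components have bounded union, which serves only to reduce elements of $BA_X$ to unions of unbounded components later on.
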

\begin{proof}
If $A\equiv C$ and $C$ is a union of some $C_X$-components of $X\setminus B$, then $X\setminus C$ is equivalent to the union of the remaining $C_X$-components of $X\setminus B$.

Suppose $A_i\equiv C_i$, $C_i$ being a union of $C_X$-components
of $X\setminus B_i$, $i=1,2$. Put $B:=B_1\cup B_2$
and consider all $C_X$-components of $X\setminus B$ contained in either $C_1$ or $C_2$. Since the family of all such $C_X$-components partitions $C_1\cup C_2$, their union $C$ is equivalent to $A_1\cup A_2$. Finally, consider all $C_X$-components of $X\setminus B$ contained in both $C_1$ and $C_2$, then their union $D$ is equivalent to $A_1\cap A_2$. This shows that if $A_1,A_2\in BA_X$, then $A_1\cup A_2,A_1\cap A_2\in BA_X$.
\end{proof}
The scaled Boolean algebra as in \ref{BAInducedByConnectivity}
is called the Boolean algebra induced by the connectivity structure $C_X$.

\section{Compactifications of scaled Boolean algebras}\label{ScaledBooleanAlgebras}
In this section we define an analog of a compactification of a topological space.

\begin{Definition}\label{DefCompactAtInfBA}
Given a scaled Boolean algebra $(X,S_X,BA_X)$, $x\in X$ is a \textbf{point at infinity} of $X$ if $\{x\}$ is unbounded.
A scaled Boolean algebra is \textbf{compact at infinity} if every family $\{A_i\}_{i\in I}$ of elements of the Boolean algebra $BA_X$ covering all points at infinity has
a finite subfamily $\{A_i\}_{i\in F}$ covering $X\setminus B$ for some $B\in S_X$.
\end{Definition}

\begin{Observation}
If a scaled Boolean algebra $(X,S_X,BA_X)$ is compact at infinity and has no points at infinity, then $X$ is bounded.
\end{Observation}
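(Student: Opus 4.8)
The plan is to exploit the definition of \emph{compact at infinity} by feeding it a trivial covering family. The key observation is that when $X$ has no points at infinity, the collection of points at infinity is empty, so the requirement ``covers all points at infinity'' becomes vacuous and is satisfied by \emph{any} family of elements of $BA_X$ whatsoever---in particular by a family whose union is empty.

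Concretely, I would argue as follows. First, recall that $\emptyset\in S_X\subseteq BA_X$, so $\emptyset$ is a legitimate element of the Boolean algebra. Consider the one-element family $\{A_i\}_{i\in I}$ with $I=\{0\}$ and $A_0=\emptyset$. Since there are no points at infinity, this family vacuously covers all points at infinity, so the hypothesis of compactness at infinity applies to it. It then yields a finite subfamily $\{A_i\}_{i\in F}$ with $F\subseteq\{0\}$, together with some $B\in S_X$, such that $\bigcup_{i\in F}A_i$ covers $X\setminus B$.

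Second, observe that whether $F$ is empty or equals $\{0\}$, the union $\bigcup_{i\in F}A_i$ equals $\emptyset$. Hence $X\setminus B\subseteq\emptyset$, that is $X\setminus B=\emptyset$, which says precisely that $X\subseteq B$. By the definition of a bounded set, $X$ is bounded, as desired.

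There is essentially no technical obstacle here; the only point requiring care is the interpretation of the phrase ``covering all points at infinity'' in the degenerate case where no such points exist. I would make explicit that an empty covering requirement is met vacuously, so that the compactness-at-infinity hypothesis does apply to the trivial family (or, equivalently, to the empty family indexed by $I=\emptyset$) and thereby forces $X\setminus B$ to be empty for a suitable $B\in S_X$.
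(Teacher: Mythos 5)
Your proof is correct, but it takes a genuinely different route from the paper's. You exploit the degenerate case of the definition: with no points at infinity, the condition ``covering all points at infinity'' is vacuous, so the single-element family $\{\emptyset\}$ (or the empty family) qualifies, and the finite subfamily it yields has empty union, forcing $X\setminus B=\emptyset$ for some $B\in S_X$. The paper instead builds a genuine cover: since no singleton is unbounded, each $x\in X$ lies in some $B(x)\in S_X\subset BA_X$, the family $\{B(x)\}_{x\in X}$ covers all of $X$ (hence all points at infinity), and compactness at infinity extracts finitely many $B(x_i)$ covering $X$ off a bounded set; then $X$ is a finite union of elements of $S_X$ together with a bounded set, hence bounded because $S_X$ is closed under finite unions. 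Your argument is shorter and shows the statement is essentially immediate from the literal definition; the paper's argument does not lean on the vacuous-cover reading at all and would survive a stricter formulation of ``covering,'' at the cost of invoking the closure of $S_X$ under finite unions. Both are valid as the definitions stand.
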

\begin{proof}
Since each point $x$ is bounded, there is $B(x)\in S_X$ containing $x$.
There are finitely many points $x_i$, $i\leq n$, so that $B:=X\setminus \bigcup\limits_{i=1}^n B(x_i)$ is bounded. Hence, $X$ is bounded.
\end{proof}

\begin{Observation}
Each scaled Boolean algebra $(X,S_X,BA_X)$ induces a topology on $X$ in which elements of the Boolean algebra $BA_X$ are clopen. If a scaled Boolean algebra is compact at infinity and every bounded union of elements of $BA_X$ belongs to $BA_X$, then each clopen set belongs to the $BA_X$.
\end{Observation}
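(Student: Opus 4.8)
The statement has two halves, and I would treat them separately.

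For the first half, the plan is to let the induced topology $\mathcal{T}$ be the one generated by $BA_X$ as a basis. This is legitimate with essentially no work: as a Boolean algebra of subsets, $BA_X$ contains $X$ (so it covers $X$) and is closed under finite intersections (so the intersection of two basis elements is again a basis element), which are exactly the two axioms required of a basis. Each $A\in BA_X$ is then open by definition; since $BA_X$ is closed under complementation, $X\setminus A\in BA_X$ is open as well, so $A$ is simultaneously open and closed, i.e. clopen.

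For the second half, let $U$ be clopen in $\mathcal{T}$; the goal is $U\in BA_X$. Because $U$ is open it is a union $\bigcup_i A_i$ of basis elements $A_i\in BA_X$ with $A_i\subseteq U$, and because $X\setminus U$ is open it is a union $\bigcup_j C_j$ of basis elements $C_j\in BA_X$ with $C_j\subseteq X\setminus U$. The combined family $\{A_i\}\cup\{C_j\}$ covers all of $X$, in particular every point at infinity, so the hypothesis that the algebra is \emph{compact at infinity} furnishes a finite subfamily covering $X\setminus B$ for some $B\in S_X$. Setting $A$ to be the (finite) union of the chosen $A_i$ and $C$ the union of the chosen $C_j$, both $A$ and $C$ lie in $BA_X$, with $A\subseteq U$, $C\subseteq X\setminus U$, and $A\cup C\supseteq X\setminus B$.

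The finish is then to observe that $U\setminus A\subseteq (X\setminus A)\cap(X\setminus C)=X\setminus(A\cup C)\subseteq B$, so $U\setminus A$ is bounded, while at the same time $U\setminus A=U\cap(X\setminus A)$ is the intersection of two open sets and hence a union of elements of $BA_X$. Thus $U\setminus A$ is a bounded union of elements of $BA_X$, and the extra hypothesis that every such union lies in $BA_X$ yields $U\setminus A\in BA_X$; since $U=A\cup(U\setminus A)$ with both summands in $BA_X$, closure under finite unions gives $U\in BA_X$.

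I expect the only genuine subtlety to be the correct deployment of compactness at infinity: one must cover $X$ (and thereby all points at infinity) using sets drawn from inside $U$ together with sets from inside $X\setminus U$, so that the resulting finite approximation $A$ sits below $U$ while the complementary piece $C$ pins $U$ from the outside, leaving only a bounded remainder. The closure hypothesis is precisely what upgrades that bounded open remainder $U\setminus A$ from a mere union of basis sets into an honest element of the algebra; without it the argument would stall at the weaker conclusion that $U$ differs from some $A\in BA_X$ by a bounded set.
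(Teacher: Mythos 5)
Your proof is correct and follows essentially the same route as the paper's: cover $U$ and $X\setminus U$ by basis elements, extract a finite subfamily covering $X\setminus B$ via compactness at infinity, and absorb the bounded remainder using the hypothesis on bounded unions. The only cosmetic difference is that the paper takes the remainder to be $U\cap B=\bigcup_i(A_i\cap B)$ directly, whereas you take $U\setminus A$ and observe it is an open set contained in $B$; both are bounded unions of elements of $BA_X$.
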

\begin{proof}
Notice that $BA_X$ is a basis for a topology on $X$ in which every element of $BA_X$ is clopen.

Suppose $(X,S_X,BA_X)$  is compact at infinity and $A$ is clopen.
Express $A$ as a union of elements $A_i$, $i\in I$, of $BA_X$ and do the same for $X\setminus A$. The resulting cover has a finite subcover $\mathcal{S}$ of $X\setminus B$
for some $B\in S_X$. Notice that $K:=\bigcup\limits_{i\in I}(B\cap A_i)\in BA_X$, and hence $A$ is a finite union of elements of $BA_X$
(the union of those from $\mathcal{S}$ contained in $A$ and $K$). Therefore $A$ belongs to $BA_X$.
\end{proof}

\begin{Proposition}
Let $(X,S_X,BA_X)$ be a compact at infinity scaled Boolean algebra, and $(Y,B_Y,BA_Y)$ is a scaled sub-Boolean algebra of $(X,S_X,BA_X)$ with $Y\subset X$ being a closed subset (i.e. its complement being a union of elements of $BA_X$), then $(Y,B_Y,BA_Y)$ is compact at infinity.
\end{Proposition}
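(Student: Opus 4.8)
The plan is to reduce compactness at infinity of $(Y,B_Y,BA_Y)$ to that of $(X,S_X,BA_X)$ by lifting covers from $Y$ back to $X$, exploiting the hypothesis that $Y$ is closed.

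First I would pin down the points at infinity of $Y$. A point $y\in Y$ is at infinity in $(Y,B_Y)$ precisely when $\{y\}$ is contained in no set of the form $K\cap Y$ with $K\in S_X$; since $y\in Y$ already, this is equivalent to $y$ lying in no $K\in S_X$, i.e.\ to $y$ being a point at infinity of $X$. Hence the points at infinity of $Y$ are exactly those points at infinity of $X$ that happen to lie in $Y$. This identification is the conceptual core of the argument and is immediate from the definition of the induced scale $B_Y$.

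Next, I would start with an arbitrary family $\{C_i\}_{i\in I}\subset BA_Y$ covering all points at infinity of $Y$. By the definition of a scaled sub-Boolean algebra, each $C_i=A_i\cap Y$ for some $A_i\in BA_X$. Because $Y$ is closed, I can write $X\setminus Y=\bigcup_{j\in J}D_j$ with each $D_j\in BA_X$. I would then form the combined family $\{A_i\}_{i\in I}\cup\{D_j\}_{j\in J}$ of elements of $BA_X$ and check that it covers every point at infinity $x$ of $X$: if $x\in Y$, then $x$ is a point at infinity of $Y$, so it is covered by some $C_i\subset A_i$; if $x\notin Y$, then $x$ lies in some $D_j$. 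Finally I would apply compactness at infinity of $X$ to extract a finite subfamily $\{A_i\}_{i\in F}\cup\{D_j\}_{j\in G}$ covering $X\setminus B$ for some $B\in S_X$. Intersecting with $Y$ and using $D_j\cap Y=\emptyset$ for $j\in G$, the sets $\{C_i\}_{i\in F}$ cover $Y\setminus B=Y\setminus(B\cap Y)$, where $B\cap Y\in B_Y$ witnesses boundedness; this is exactly the finite subcover required.

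I expect the only delicate point to be the bookkeeping around the decomposition $X\setminus Y=\bigcup_{j}D_j$: closedness supplies only a \emph{union} of elements of $BA_X$ rather than a single such element, so the augmented family may be infinite and one cannot simply adjoin $X\setminus Y$ as one block. The resolution is that compactness at infinity is applied \emph{after} forming the combined family, so that only finitely many $D_j$ survive, and these disappear upon restriction to $Y$; hence the potential infinitude causes no genuine obstruction.
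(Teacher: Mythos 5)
Your argument is correct and follows essentially the same route as the paper's: lift the cover of $Y$'s points at infinity to elements of $BA_X$, adjoin a decomposition of $X\setminus Y$ into elements of $BA_X$ (supplied by closedness), apply compactness at infinity of $X$, and restrict the resulting finite subcover back to $Y$. Your write-up is in fact more detailed than the paper's, in particular in verifying that the points at infinity of $Y$ are exactly the points at infinity of $X$ lying in $Y$ and that the sets $D_j$ vanish upon intersection with $Y$.
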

\begin{proof}
Let $\{A_i\}_{i\in I}$ be a family of elements of the Boolean algebra $BA_Y$ covering all points at infinity of $Y$. There exists a family $\{C_i\}_{i\in I}$ of elements of the Boolean algebra $BA_X$ such that $A_i=C_i\cap Y$ for all $i\in I$. Express $X\setminus Y$ as a union of elements $\{D_j\}_{j\in J}$ of $BA_X$, then the family $\{C_i, D_j:i\in I\text{ and } j\in J\}$ covers all points at infinity of $X$.
A finite subfamily of it covers all of $Y$ but a bounded subset of $Y$.
\end{proof}

\begin{Definition}
Let $(X,S_X,BA_X)$ be a scaled Boolean algebra. An \textbf{end} of it is a family $V=\{A_i\}_{i\in I}$ of unbounded elements of $BA_X$ that is maximal with respect to the property that $\bigcap\limits_{i\in F} A_i$ is unbounded for each finite subset $F$ of $I$.
 An end $V=\{A_i\}_{i\in I}$ is \textbf{external} if $\bigcap\limits_{i\in I} A_i$ is bounded, otherwise it is \textbf{internal}. The family of all ends of $(X,S_X,BA_X)$ is denoted by $Ends(X,S_X,BA_X)$ or by $Ends(X)$ for simplicity if the Boolean algebra is clearly understood. Similarly, the family of all internal (respectively, external) ends of $(X,S_X,BA_X)$ is denoted by $IntEnds(X,S_X,BA_X)$ (respectively, $ExtEnds(X,S_X,BA_X)$) or by $IntEnds(X)$ (respectively, $ExtEnds(X)$) for simplicity if the Boolean algebra is clearly understood.
\end{Definition}

\begin{Observation}
If $V=\{A_i\}_{i\in I}$ is an external end, then in fact
$\bigcap\limits_{i\in I} A_i=\emptyset$.
\end{Observation}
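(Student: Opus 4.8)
The plan is to exploit the maximality of the end $V$ together with the hypothesis $S_X\subset BA_X$, which guarantees that every bounded set is dominated by a member of the Boolean algebra. Write $B:=\bigcap_{i\in I}A_i$; by the external assumption $B$ is bounded, so there is some $B_0\in S_X$ with $B\subset B_0$. The key observation is that $B_0\in BA_X$, and hence its complement $X\setminus B_0$ is also a member of $BA_X$ — an element on which maximality can bite.

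Before invoking maximality I would check that $X\setminus B_0$ is unbounded. Since $V$ consists of unbounded sets it is nonempty, so $X$ itself is unbounded; were $X\setminus B_0$ bounded, then $X=B_0\cup(X\setminus B_0)$ would be a union of two bounded sets, hence bounded (bounded sets are closed under finite unions), a contradiction.

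The main step is to show that $X\setminus B_0$ has unbounded intersection with every finite sub-intersection coming from $V$. For finite $F\subset I$ one has $\big(\bigcap_{i\in F}A_i\big)\cap (X\setminus B_0)=\big(\bigcap_{i\in F}A_i\big)\setminus B_0$; since $\bigcap_{i\in F}A_i$ is unbounded and $B_0$ is bounded, removing $B_0$ leaves an unbounded set (if $A\setminus B_0$ were bounded, then $A\subset (A\setminus B_0)\cup B_0$ would be bounded). Therefore the enlarged family $V\cup\{X\setminus B_0\}$ still has the defining property that all of its finite intersections are unbounded, and by maximality of $V$ this forces $X\setminus B_0\in V$, i.e. $X\setminus B_0=A_{i_0}$ for some $i_0\in I$. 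Combining the two containments now finishes the argument: $B=\bigcap_{i\in I}A_i\subset A_{i_0}=X\setminus B_0$, while simultaneously $B\subset B_0$, so $B\subset B_0\cap(X\setminus B_0)=\emptyset$.

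I expect the only real subtlety to be recognizing that boundedness of $B$ hands us an element $B_0$ of the Boolean algebra itself through $S_X\subset BA_X$, so that $X\setminus B_0$ becomes an admissible candidate to feed into the maximality of $V$; once that is seen, the unboundedness bookkeeping and the final intersection are routine.
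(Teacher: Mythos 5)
Your proof is correct and uses essentially the same mechanism as the paper's: both arguments observe that removing a bounded set from an unbounded set leaves an unbounded set, and then invoke maximality of $V$ to force a new element into $V$ that is disjoint from the bounded intersection. The only cosmetic difference is that you adjoin the single set $X\setminus B_0$, whereas the paper adjoins $A_i\setminus B$ for each $i\in I$; both immediately yield $\bigcap_{i\in I}A_i=\emptyset$.
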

\begin{proof}
Pick $B\in S_X$ containing $\bigcap\limits_{i\in I} A_i$.
For each $i\in I$, $A_i\setminus B$ must belong to $V$
as it has unbounded intersections with any finite subfamily of $V$.
Therefore, $\bigcap\limits_{i\in I} A_i=\emptyset$.
\end{proof}

\begin{Observation}\label{ObsAboutInternalEnds}
If $V=\{A_i\}_{i\in I}$ is an internal end, then every point $x$ of
$C:=\bigcap\limits_{i\in I} A_i$ is a point at infinity of $X$.
Moreover, if $x\in C$ belongs to $A\in BA_X$, then $C\subset A$.
\end{Observation}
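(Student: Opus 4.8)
The plan is to first establish an ultrafilter-type dichotomy for the end $V=\{A_i\}_{i\in I}$: for every $A\in BA_X$ at least one of $A$ and its complement $X\setminus A$ belongs to $V$. Granting this, both assertions follow quickly, and the dichotomy itself is where the only genuine obstacle lies. I would present it as a short lemma so that the two claims of the Observation become corollaries.

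To prove the dichotomy I argue from maximality. First I record the basic consequence of maximality: if $A\in BA_X$ does not belong to $V$, then there is a finite $F\subseteq I$ with $A\cap\bigcap_{i\in F}A_i$ bounded (using the convention $\bigcap_{i\in\emptyset}A_i=X$, which covers the case where $A$ itself is bounded). Indeed, otherwise $A$ is unbounded and has unbounded intersection with every finite subfamily of $V$, so $V\cup\{A\}$ is a family of unbounded elements retaining the defining property and strictly containing $V$, contradicting maximality. Now suppose that neither $A$ nor $A':=X\setminus A$ lies in $V$, and choose finite $F,F'$ with $A\cap\bigcap_{i\in F}A_i$ and $A'\cap\bigcap_{i\in F'}A_i$ both bounded. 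Since $A\cup A'=X$, the set $\bigcap_{i\in F\cup F'}A_i$ is contained in the union of these two bounded sets, hence bounded because $S_X$ is closed under finite unions; but $F\cup F'$ is a finite subset of $I$, so this intersection must be unbounded, a contradiction. Thus at least one of $A,A'$ lies in $V$.

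With the dichotomy in hand, the \emph{moreover} statement is immediate. Suppose $x\in C$ and $x\in A\in BA_X$, but $C\not\subseteq A$, and pick $y\in C\setminus A$. Every member of $V$ is some $A_i$ and therefore contains $C=\bigcap_{i\in I}A_i$; in particular every member of $V$ contains both $x$ and $y$. By the dichotomy, $A\in V$ or $X\setminus A\in V$. The first forces $y\in C\subseteq A$, contradicting $y\notin A$; the second forces $x\in C\subseteq X\setminus A$, contradicting $x\in A$. Hence $C\subseteq A$.

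Finally, the first assertion reduces to the \emph{moreover} part. If some $x\in C$ were not a point at infinity, then $\{x\}$ is bounded, so $x\in B$ for some $B\in S_X\subseteq BA_X$; applying \emph{moreover} to this $B$ yields $C\subseteq B$, so $C$ is bounded. This contradicts the assumption that $V$ is internal, i.e. that $C$ is unbounded. Therefore every point of $C$ is a point at infinity. The crux throughout is the dichotomy, whose verification is precisely where maximality of the end and closure of the scale under finite unions are used; everything else is bookkeeping.
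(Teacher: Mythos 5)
Your proof is correct, but it takes a different route from the paper's. The paper first shows directly that $C$ misses every bounded set: for each $B\in S_X$ and each $i$, the set $A_i\setminus B$ still has unbounded intersection with every finite subfamily of $V$, so by maximality $A_i\setminus B\in V$ and hence $C\cap B=\emptyset$; this immediately makes every point of $C$ a point at infinity. It then gets the \emph{moreover} part almost for free: if $x\in C\cap A$, then $A\cap\bigcap_{i\in F}A_i$ contains the point at infinity $x$, hence is unbounded, so $A\in V$ by maximality and $C\subset A$. You instead isolate the ultrafilter dichotomy (for every $A\in BA_X$, either $A\in V$ or $X\setminus A\in V$) as the key lemma, derive the \emph{moreover} claim from it, and only then obtain the point-at-infinity claim by applying \emph{moreover} to a bounded $B\in S_X\subset BA_X$ containing $x$ --- the reverse order of the paper. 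Both arguments rest on the same maximality principle and on closure of $S_X$ under finite unions; the paper's version is slightly more economical, while yours makes explicit a reusable structural fact about ends (the dichotomy is essentially why $Ends(X)$ is a Stone space) that the paper never states in this form. One small simplification: in your \emph{moreover} step the auxiliary point $y$ is unnecessary, since $A\in V$ already gives $C\subset A$ directly and $X\setminus A\in V$ is ruled out by $x\in C\cap A$ alone.
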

\begin{proof}
As above, $B\cap C=\emptyset$ for each $B\in S_X$, hence any $x\in C$ is unbounded. If $x\in A\in BA_X$, then for any finite $F\subset I$
the set $A\cap \bigcap\limits_{i\in F} A_i $ is unbounded
as it contains $x$. Thus, $A=A_i$ for some $i\in I$ and $C\subset A$.
\end{proof}

In view of Observation \ref{ObsAboutInternalEnds} the intersection of all elements of any internal end is a blob that should be treated as a singular point. That leads to the following:

\begin{Definition}
A scaled Boolean algebra $(X,S_X,BA_X)$ is \textbf{Hausdorff} if
each point at infinity $x$ of $X$ is the intersection of all elements of $BA_X$ containing it.
\end{Definition}

\begin{Proposition}
Suppose $(X,S_X,BA_X)$ is a scaled Boolean algebra.\\ 
a. If $(X,S_X,BA_X)$ is compact at infinity, then it has no external ends.\\
b. If $(X,S_X,BA_X)$ has no ends, then it is compact at infinity.
\end{Proposition}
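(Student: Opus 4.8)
The plan is to prove both implications by passing to complements inside the Boolean algebra $BA_X$ and exploiting the duality between ``a family covering $X$ minus a bounded set'' and ``the complementary family having the finite--intersection property,'' where here the relevant intersection property is that all finite intersections are \emph{unbounded}. Since $BA_X$ is closed under complementation, every step below stays inside $BA_X$.

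For part (a) I would argue by contradiction. Suppose $(X,S_X,BA_X)$ is compact at infinity yet admits an external end $V=\{A_i\}_{i\in I}$. By the Observation that an external end has empty total intersection, $\bigcap_{i\in I}A_i=\emptyset$, so the complements $\{X\setminus A_i\}_{i\in I}$ cover all of $X$, in particular every point at infinity. Compactness at infinity then yields a finite $F\subset I$ with $\bigcup_{i\in F}(X\setminus A_i)\supset X\setminus B$ for some $B\in S_X$. Taking complements gives $\bigcap_{i\in F}A_i\subset B$, so this finite intersection is bounded, contradicting the defining property of an end that $\bigcap_{i\in F}A_i$ be unbounded for every finite $F$. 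Hence no external end can exist.

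For part (b) the key observation is that an unbounded scaled Boolean algebra always possesses an end: if $X$ is unbounded, then $X\in BA_X$ is itself unbounded, the singleton family $\{X\}$ trivially has unbounded finite intersections, and Zorn's Lemma enlarges it to a maximal such family, i.e. an end (the union of a chain of such families is again one, since any finite intersection already occurs inside a single member of the chain). Contrapositively, having no ends forces $X$ to be bounded, and a bounded algebra is compact at infinity for free: choosing $B\in S_X$ with $X\subset B$, the empty subfamily already covers $X\setminus B=\emptyset$. Alternatively one can mirror (a) directly: if compactness at infinity fails, there is a family $\{A_i\}_{i\in I}$ covering all points at infinity with no finite union $\bigcup_{i\in F}A_i$ containing a co-bounded set, equivalently $\bigcap_{i\in F}(X\setminus A_i)$ unbounded for every finite $F$; then $\{X\setminus A_i\}_{i\in I}$, augmented by $X$ to cover the degenerate case $I=\emptyset$, is an unbounded family with the finite--intersection property that Zorn's Lemma extends to an end, a contradiction.

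The complementation identities and the Zorn's Lemma verification are routine; the only point demanding care is the treatment of degenerate cases. Here one must remember that the empty intersection equals $X$, so the empty family qualifies as having unbounded finite intersections exactly when $X$ is unbounded, in which case it is never maximal. I expect the main conceptual step to be recognizing that existence of an end is equivalent to unboundedness of $X$, which collapses part (b) to the trivial fact that bounded algebras are compact at infinity.
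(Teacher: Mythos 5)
Your part (a) is exactly the paper's argument: pass to complements, invoke compactness at infinity to extract a finite subfamily, and conclude that a finite intersection of the end is bounded, a contradiction (the paper, like you, silently uses the earlier Observation that an external end has empty total intersection to know the complements cover all of $X$). For part (b) your primary argument genuinely diverges from the paper's. The paper argues directly from the failure of compactness: the complements of a bad cover form a family with the unbounded finite-intersection property, which Zorn extends to an end. You instead first establish the stronger structural fact that an end exists if and only if $X$ is unbounded (since $\{X\}$ always seeds a Zorn argument when $X$ is unbounded), so ``no ends'' collapses to ``$X$ bounded'' and compactness at infinity becomes trivial. Your route is shorter and exposes that statement (b) as stated is rather weak; the paper's route is the one that generalizes (it is the same mechanism used later in Lemma \ref{BasicPropOfAlgOfEnds}(d), where one needs an end witnessing the failure of a \emph{specific} cover, not merely some end). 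Your ``alternative'' second argument for (b) is precisely the paper's proof, and your handling of the degenerate cases (empty family, empty index set) is correct and slightly more careful than the paper's. Both of your arguments are sound.
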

\begin{proof}
a. Assume that $(X,S_X,BA_X)$ is compact at infinity. If  $V=\{A_i\}_{i\in I}$ is an external end of it, then $\{X\setminus A_i\}_{i\in I}$ is a cover of $X$ by elements of the Boolean algebra $BA_X$. Choose a finite subset $F\subset I$ such that $\{X\setminus A_i\}_{i\in F}$ covers $X\setminus B$ for some $B\in S_X$. Then $\bigcap\limits_{i\in F} A_i$ is bounded, a contradiction.\\
b. Suppose that $Ends(X)=\emptyset$ and that $(X,S_X,BA_X)$ is not compact at infinity. Choose a cover $\{A_i\in BA_X:i\in I\}$
of points at infinity of $X$ that has no finite subcover of $X\mod S_X$. Notice that, for any finite subset $F\subset I$, if $\bigcap\limits_{i\in F} X\setminus A_i$ is bounded, then $\bigcup\limits_{i\in F} A_i$ is a finite subcover of $X\mod S_X$, a contradiction. Therefore, $\{X\setminus A_i:i\in I\}$ is contained in some end, a contradiction.
\end{proof}

\begin{Definition}\label{DefCompactificationAtInfOfScBA}
A \textbf{compactification at infinity} of a scaled Hausdorff Boolean algebra $(X,S_X,BA_X)$ is a compact at infinity scaled Hausdorff Boolean algebra $(\tilde X,S_X,\widetilde{BA_X})$ such that the following conditions are satisfied:\\
1. $(X,S_X,BA_X)$ is a scaled sub-Boolean algebra of $(\tilde X,S_X,\widetilde{BA_X})$. \\
2. If $A\in \widetilde{BA_X}$ intersects $\tilde X\setminus X$,
then it intersects $X$ as well.
\end{Definition}

\begin{Observation}
Conditions 1 and 2 in \ref{DefCompactificationAtInfOfScBA} can be summarized as follows: the function $A\to A\cap X$
induces isomorphism of Boolean algebras $(\tilde X,S_X,\widetilde{BA_X})$ and $(X,S_X,BA_X)$.
\end{Observation}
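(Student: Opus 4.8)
The plan is to show that the single map $\rho\colon\widetilde{BA_X}\to 2^X$, $\rho(A)=A\cap X$, encodes conditions 1 and 2 completely, by establishing the equivalence in both directions. First I would record that $\rho$ is automatically a Boolean homomorphism: as ``intersection with the fixed set $X$'' it preserves finite unions and intersections trivially, and it preserves complements because the complement of $A$ inside $\widetilde{BA_X}$ is $\tilde X\setminus A$, and $(\tilde X\setminus A)\cap X=X\setminus(A\cap X)$, which is precisely the complement of $\rho(A)$ taken inside $X$. Since $S_X$ is the shared scale and its members are subsets of $X$, the map $\rho$ fixes $S_X$ pointwise, so it automatically respects the scales. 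Thus the only real content of the word ``isomorphism'' is bijectivity, and my task is to match bijectivity against conditions 1 and 2.

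For the forward direction I would argue that condition 1, upon unwinding the definition of scaled sub-Boolean algebra, yields exactly $\{A\cap X:A\in\widetilde{BA_X}\}=BA_X$; this says $\rho$ takes values in $BA_X$ and is surjective onto it. Injectivity is where condition 2 does the work: if $\rho(A)=\rho(A')$ then the symmetric difference $A\Delta A'$, which again lies in $\widetilde{BA_X}$, satisfies $(A\Delta A')\cap X=\emptyset$, so $A\Delta A'$ misses $X$; the contrapositive of condition 2 forces it to miss $\tilde X\setminus X$ as well, whence $A\Delta A'=\emptyset$ and $A=A'$. For the reverse direction, surjectivity of $\rho$ returns the Boolean-algebra clause of condition 1, the scale-fixing property returns its scaled-subspace clause, and injectivity returns condition 2 directly: were some $A$ to meet $\tilde X\setminus X$ but not $X$, then $\rho(A)=\emptyset=\rho(\emptyset)$ would force $A=\emptyset$, a contradiction.

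I do not anticipate a serious obstacle, since the statement is essentially a repackaging of the definitions. The two points needing genuine care are the complement computation for $\rho$ (complements are formed in the two different ambient sets $\tilde X$ and $X$, so one must check they correspond) and the choice of the symmetric difference $A\Delta A'$ as the element of $\widetilde{BA_X}$ on which condition 2 is tested; the rest is bookkeeping about which clause of the sub-Boolean-algebra definition supplies which half of the bijection.
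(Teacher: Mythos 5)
Your proof is correct. The paper states this Observation without any proof, treating it as immediate from the definitions, and your verification is the natural one: the identification of condition 1 with surjectivity of $A\mapsto A\cap X$ onto $BA_X$, and of condition 2 with injectivity via the symmetric difference $A\Delta A'$ (which lies in $\widetilde{BA_X}$ and meets neither $X$ nor, by the contrapositive of condition 2, $\tilde X\setminus X$), is exactly the bookkeeping the authors leave to the reader.
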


\begin{Proposition}
Given a compactification at infinity $(\tilde X,S_X,\widetilde{BA_X})$ of a scaled Hausdorff Boolean algebra $(X,S_X,BA_X)$
and given $A\in BA_X$, the unique element $A'$ of $\widetilde{BA_X}$ such that $A=A'\cap X$ is equal $cl(A)$,
where $cl$ is the closure operator in $\tilde X$
induced by $\widetilde{BA_X}$. Moreover, for all $A_1,A_2\in BA_X$, then $cl(A_1\cap A_2)=cl(A_1)\cap cl(A_2)$.
\end{Proposition}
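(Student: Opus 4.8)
The plan is to prove the two assertions in turn, drawing on the isomorphism $A \mapsto A \cap X$ between $\widetilde{BA_X}$ and $BA_X$ recorded in the Observation following Definition \ref{DefCompactificationAtInfOfScBA}, and on the fact that every element of a scaled Boolean algebra is clopen in the induced topology. In particular, existence and uniqueness of $A'$ are already guaranteed by that isomorphism, so the content of the first assertion is the identification $A' = cl(A)$.

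For the first assertion I would establish the two inclusions $cl(A) \subseteq A'$ and $A' \subseteq cl(A)$ separately. The inclusion $cl(A) \subseteq A'$ is immediate: since $A' \in \widetilde{BA_X}$ it is clopen, hence closed in $\tilde X$, and since $A = A' \cap X \subseteq A'$, the closure of $A$ lies in $A'$. The reverse inclusion is where condition $2$ of a compactification at infinity does the work. Given $p \in A'$, I want every basic clopen neighborhood $U \in \widetilde{BA_X}$ of $p$ to meet $A$. If $p \in X$ then $p \in A' \cap X = A$ and there is nothing to prove; otherwise $p \in \tilde X \setminus X$, and the set $U \cap A' \in \widetilde{BA_X}$ contains $p$, so it meets $\tilde X \setminus X$. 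Condition $2$ then forces $U \cap A'$ to meet $X$, and any witness lies in $(U \cap A') \cap X = U \cap A$. Thus $p \in cl(A)$, giving $A' \subseteq cl(A)$ and hence $A' = cl(A)$.

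For the second assertion I would reduce everything to uniqueness. Since $BA_X$ is closed under intersection, the first part applies to $A_1 \cap A_2$ and gives $cl(A_1 \cap A_2) = (A_1 \cap A_2)'$, the unique element of $\widetilde{BA_X}$ restricting to $A_1 \cap A_2$. On the other hand $A_1' \cap A_2' \in \widetilde{BA_X}$ and $(A_1' \cap A_2') \cap X = (A_1' \cap X) \cap (A_2' \cap X) = A_1 \cap A_2$, so by the uniqueness part of the isomorphism $A_1' \cap A_2' = (A_1 \cap A_2)'$. Combining this with the first assertion applied to $A_1 \cap A_2$, to $A_1$, and to $A_2$ yields $cl(A_1 \cap A_2) = cl(A_1) \cap cl(A_2)$.

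The main obstacle, such as it is, lies entirely in the reverse inclusion of the first part: one must resist arguing pointwise inside $X$ and instead use condition $2$ to transport a point of $\tilde X \setminus X$ sitting in a clopen neighborhood back into $X$. Once the first assertion is in hand, the second is a formal consequence of the bijectivity of $A \mapsto A \cap X$ and requires no further topological input.
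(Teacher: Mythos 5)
Your proposal is correct and follows essentially the same route as the paper: the easy inclusion $cl(A)\subseteq A'$ comes from $A'$ being closed, the reverse inclusion uses condition 2 of the definition of compactification at infinity to show a clopen neighborhood of a point of $A'$ must meet $A$ (the paper phrases this contrapositively, you argue directly, but it is the same mechanism), and the identity $cl(A_1\cap A_2)=cl(A_1)\cap cl(A_2)$ follows from the uniqueness of the element of $\widetilde{BA_X}$ restricting to $A_1\cap A_2$. No gaps.
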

\begin{proof}
Since $A'$ is a closed subset of $\tilde X$ containing $A$, so $cl(A)\subset A'$.

If $x\notin cl(A)$, then there is $C\in \widetilde{BA_X}$ containing $x$ and not intersecting $A$. Look at $C\cap A'$. It does not intersect $A$, so $X\cap C\cap A'=\emptyset$. Hence $C\cap A'=\emptyset$ and $x\notin A'$. Finally, assume that $A_1,A_2\in BA_X$, then $A_i=X\cap cl(A_i)$, $i=1,2$. Since $cl(A_1)\cap cl(A_2)\in \widetilde{BA_X}$, and $A_1\cap A_2=X\cap(cl(A_1)\cap cl(A_2))$, thus $cl(A_1\cap A_2)=cl(A_1)\cap cl(A_2)$.
\end{proof}

\begin{Proposition}\label{ObsInducedExtEnds}
Given a compactification at infinity $(\tilde X,S_X,\widetilde{BA_X})$ of a scaled Hausdorff Boolean algebra $(X,S_X,BA_X)$
and given $x\in \tilde X\setminus X$, the family $V_x$ of all $A\in BA_X$ such that $x\in cl(A)$ is an external end of $(X,S_X,BA_X)$.
Moreover, if $x\ne y\in \tilde X\setminus X$, then $V_x\ne V_y$.
\end{Proposition}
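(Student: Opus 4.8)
The plan is to view $V_x$ as the trace on $BA_X$ of the clopen ``ultrafilter'' of $x$ in $\tilde X$, and to verify in turn the four defining properties of an external end—unboundedness of members, unboundedness of finite intersections, maximality, and emptiness of the total intersection—followed by the separation statement. Throughout I would lean on the preceding Proposition, which identifies $cl(A)$ with the unique $A'\in\widetilde{BA_X}$ satisfying $A'\cap X=A$ and shows $cl(A_1\cap A_2)=cl(A_1)\cap cl(A_2)$. From this I would first record two facts for repeated use: since $cl(A)$ is clopen in $\tilde X$, its complement $\tilde X\setminus cl(A)$ lies in $\widetilde{BA_X}$ and meets $X$ in $X\setminus A$, so by uniqueness $cl(X\setminus A)=\tilde X\setminus cl(A)$; and, by induction, $cl$ commutes with all finite intersections.

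The step I expect to be the crux is showing that every member of $V_x$ is unbounded, i.e. that $x\in cl(A)$ forces $A$ unbounded. The key observation—easy to overlook—is that $S_X\subset\widetilde{BA_X}$, so each $B\in S_X$ is a basic clopen subset of $\tilde X$ and therefore closed; hence $cl(B)=B\subset X$, and $cl(B)$ misses $\tilde X\setminus X$ entirely. Consequently, if $A$ were bounded, say $A\subset B\in S_X$, then $cl(A)\subset cl(B)=B\subset X$, so $x\notin cl(A)$ and $A\notin V_x$. This is exactly the place where an arbitrary Stone-type compactification would fail; the constraint $S_X\subset\widetilde{BA_X}$ is precisely what pins the bounded sets down inside $X$.

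With unboundedness in hand, the finite-intersection property is immediate: if $A_1,\dots,A_n\in V_x$, then $x\in\bigcap_j cl(A_j)=cl(\bigcap_j A_j)$, so $\bigcap_j A_j\in V_x$ and is therefore unbounded by the previous step. For maximality I would argue that $V_x$ admits no proper enlargement: if $A\notin V_x$ then $x\notin cl(A)$, so $x\in\tilde X\setminus cl(A)=cl(X\setminus A)$, giving $X\setminus A\in V_x$; any family containing both $A$ and $V_x$ would then contain $A$ and $X\setminus A$, whose intersection $\emptyset$ is bounded, so the defining property of an end is destroyed. Hence $V_x$ is an end.

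Finally, externality and injectivity both flow from the Hausdorff hypothesis. Since every $B\in S_X$ is contained in $X$ while $x\notin X$, the singleton $\{x\}$ is unbounded, so $x$ is a point at infinity of $\tilde X$ and Hausdorffness gives $\{x\}=\bigcap\{C\in\widetilde{BA_X}:x\in C\}$. For externality, given any $z\in X$ I would use $z\ne x$ to select $C\in\widetilde{BA_X}$ with $x\in C$ and $z\notin C$; then $A:=C\cap X$ satisfies $cl(A)=C$, so $A\in V_x$ while $z\notin A$, proving $z\notin\bigcap_{A\in V_x}A$. As $\bigcap_{A\in V_x}A\subset X$, it is empty, so $V_x$ is external. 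For injectivity, given $x\ne y$ in $\tilde X\setminus X$ the same separation yields $C\ni x$ with $y\notin C$, whence $A:=C\cap X$ lies in $V_x$ but not in $V_y$, so $V_x\ne V_y$. The only genuinely delicate ingredient is the unboundedness step above; everything else is a routine transcription of the two closure identities.
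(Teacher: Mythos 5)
Your proof is correct and follows essentially the same route as the paper's: both rest on the closure identities $cl(A_1\cap A_2)=cl(A_1)\cap cl(A_2)$ and $cl(X\setminus A)=\tilde X\setminus cl(A)$ from the preceding proposition, deducing unboundedness of finite intersections from the fact that a bounded set is contained in a $B\in S_X$ that equals its own closure inside $X$, and maximality by separating $x$ from $cl(A)$ when $A\notin V_x$. Your write-up is in fact somewhat more complete, since you also spell out the externality of $V_x$ and the injectivity of $x\mapsto V_x$ via the Hausdorff hypothesis, two points the paper's proof leaves implicit.
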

\begin{proof}
If $x\in cl(A_i)$, $i\leq n$, then $C:=\bigcap\limits_{i=1}^n A_i$
being bounded implies $ \bigcap\limits_{i=1}^n cl(A_i)=C$,
a contradiction. If $A\in BA_X$ intersects each $A_i$ along an unbounded subset and $x\notin cl(A)$, then there is $A'\in BA_X$
containing $x$ in its closure
with $cl(A')\cap A=\emptyset$. Hence, $A\cap A'=\emptyset$, a contradiction.
\end{proof}

Our next result is the main point of this section and it is motivated by \ref{ObsInducedExtEnds}.
\begin{Lemma}\label{BasicPropOfAlgOfEnds}
Let $(X,S_X,BA_X)$ be a scaled Hausdorff Boolean algebra.\\
($a$) The family $\overline{BA_X}:=\{\bar A:A\in BA_X\}$ is a sub-Boolean algebra of $2^{\bar X}$, and $(\bar X,S_X,\overline{BA_X})$ is a scaled Hausdorff Boolean algebra, where $\bar A:=A\cup\{V\in ExtEnds(X):A\in V\}$ and $\bar X:=X\cup ExtEnds(X)$.\\
($b$) Let $A\subset X$ and $V\in ExtEnds(X)$. $A\in V$ if and only $V\in cl_{\bar X}(A)$.\\
($c$) If $A\in BA_X$, then $cl_{\bar X}(A)=\bar A$.\\
($d$) If $\{cl_{\bar X}(A_i):i\in I\}$ is a family of elements of $\overline{BA_X}$ covering $Ends(X)$, then there is a finite subset $F\subset I$ such that $X\setminus \bigcup\limits_{i\in F}A_i$ is bounded.
\end{Lemma}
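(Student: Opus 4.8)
The plan rests on first establishing that an end $V=\{A_i\}_{i\in I}$ of $(X,S_X,BA_X)$ behaves like an ultrafilter on $BA_X$ modulo the ideal of bounded sets. I would record three consequences of maximality: (1) $V$ is closed under supersets in $BA_X$, since $A\in V$ and $A\subseteq C\in BA_X$ give $C\cap\bigcap_{i\in F}A_i\supseteq A\cap\bigcap_{i\in F}A_i$ unbounded for every finite $F$, so $C$ is addable and hence already in $V$; (2) by the same argument $V$ is closed under finite intersections; and (3) the dichotomy: for each $A\in BA_X$ exactly one of $A,\,X\setminus A$ lies in $V$. Both cannot, since $A\cap(X\setminus A)=\emptyset$ is bounded; and if neither did, maximality would supply finite $F,G$ with $A\cap\bigcap_{i\in F}A_i$ and $(X\setminus A)\cap\bigcap_{i\in G}A_i$ both bounded, forcing $\bigcap_{i\in F\cup G}A_i$ bounded, a contradiction. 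From (1)--(3) one reads off $A\cup B\in V\iff A\in V$ or $B\in V$. These are the workhorses for everything below.

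Part (a). The map $A\mapsto\bar A$ is a Boolean homomorphism: $\overline{A\cup B}=\bar A\cup\bar B$ is exactly the statement $A\cup B\in V\iff A\in V$ or $B\in V$, and $\overline{X\setminus A}=\bar X\setminus\bar A$ is the dichotomy. Hence $\overline{BA_X}$ is a sub-Boolean algebra of $2^{\bar X}$. Since every $B\in S_X$ is bounded and ends consist of unbounded sets, $\bar B=B$, so $S_X\subseteq\overline{BA_X}$ and the scale is unchanged. For the Hausdorff condition there are two kinds of points at infinity of $\bar X$. At an external end $V$ the neighborhoods in $\overline{BA_X}$ are precisely the $\bar A$ with $A\in V$; for $W\ne V$ external, maximality gives $A\in V\setminus W$, so $W\notin\bar A$, while $\bigcap_{A\in V}A=\emptyset$ (the Observation on external ends) removes every point of $X$, yielding $\bigcap\{\bar A:V\in\bar A\}=\{V\}$. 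At a point at infinity $x\in X$ I would show $\mathcal F_x:=\{A\in BA_X:x\in A\}$ is itself an end—maximality is forced because $x\notin A$ puts $X\setminus A\in\mathcal F_x$—and that it is internal, its intersection being $\{x\}$ by the Hausdorff hypothesis on $X$. Thus $\mathcal F_x$ is the unique end through $x$, no external end clusters at $x$, and $\bigcap\{\bar A:x\in\bar A\}=\{x\}$.

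Parts (b) and (c). For $A\in BA_X$ (which is the content of the condition $A\in V$) the basic clopen neighborhoods of an external end $V$ are the $\bar C$ with $C\in V$, and since $A\subseteq X$ is disjoint from the ends, $\bar C\cap A=C\cap A$; hence $V\in cl_{\bar X}(A)$ iff $C\cap A\neq\emptyset$ for all $C\in V$. If $A\in V$ then by (2) $A\cap C\in V$ is unbounded, so the condition holds; conversely if $A\notin V$ the dichotomy gives $X\setminus A\in V$, and $C:=X\setminus A$ yields $C\cap A=\emptyset$. This proves (b). For (c), $\bar A\in\overline{BA_X}$ is clopen and contains $A$, so $cl_{\bar X}(A)\subseteq\bar A$; the reverse inclusion is (b) together with $A\subseteq cl_{\bar X}(A)$.

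Part (d). Here ``covers $Ends(X)$'' means every end contains some $A_i$—equivalently, using (b)/(c) for external ends and the identification $x\leftrightarrow\mathcal F_x$ for internal ones, that the $\bar A_i$ cover all points at infinity of $\bar X$. I would argue by contradiction: if $X\setminus\bigcup_{i\in F}A_i$ is unbounded for every finite $F\subseteq I$, then $\{X\setminus A_i:i\in I\}$ is a family in $BA_X$ all of whose finite intersections $\bigcap_{i\in F}(X\setminus A_i)=X\setminus\bigcup_{i\in F}A_i$ are unbounded; by Zorn's Lemma it extends to a maximal such family, i.e. an end $V$ with $X\setminus A_i\in V$ for all $i$. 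The dichotomy then forces $A_i\notin V$ for every $i$, contradicting the covering hypothesis. The main obstacle I anticipate is in part (a): ruling out an external end that collides with an interior point at infinity $x$, which is exactly what identifying $\mathcal F_x$ as the unique—and internal—end through $x$ accomplishes. Once properties (1)--(3) and this identification are in place, the remaining verifications are routine.
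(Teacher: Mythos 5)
Your proof is correct and follows essentially the same route as the paper's: the ultrafilter-style dichotomy for ends, the identification of the basic neighborhoods of an external end $V$ as the sets $\bar C$ with $C\in V$, and the finite-intersection-property argument by contradiction for part ($d$). The only difference is one of detail: you make explicit the maximality consequences (closure under supersets and finite intersections, the dichotomy $A\in V$ xor $X\setminus A\in V$) and the Hausdorff verification at points at infinity of $X$ via the internal end $\mathcal{F}_x$, all of which the paper's terser proof of ($a$) leaves implicit.
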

\begin{proof}
($a$) Notice that $\overline {A_1\cup A_2}=\bar A_1\cup \bar A_2$ and $\overline {A_1\cap A_2}=\bar A_1\cap \bar A_2$ for all $A_1,A_2\in BA_X$. Hence, $\overline{X\setminus A}= \overline {X}\setminus \overline {A}$.
Moreover, $\bar B=B$, for all $B\in S_X$.\\
($b$)  Let $A\subset X$ and $V\in ExtEnds(X)$. Assume that $A\in V$ and let $C\in BA_X$ such that $\bar C$ is a neighborhood of $V$, then $A,C\in V$ and so $C\cap A\ne \emptyset$. Thus, $V\in cl_{\bar X}(A)$. Conversely, assume that $V\in cl_{\bar X}(A)$ and $A\notin V$. Choose $C\in V$ such that $C\cap A=\emptyset$. Hence, $\bar C$ is a neighborhood of $V$ that misses $A$, a contradiction.\\
($c$) Let $A\in BA_X$. Clearly $cl_{\bar X}(A)\subset\bar A$. If $V\in \bar A\cap ExtEnds(X)$, then $A\in V$, and hence by part ($b$), $V\in cl_{\bar X}(A)$.\\
($d$) Let $\mathcal{F}$ be the collection of all finite subsets of $I$. Seeking contradiction assume that for any $F\in \mathcal{F}$, $A_{F}=X\setminus \bigcup\limits_{i\in F}A_i$ is unbounded. The collection $\{A_{F}:F\in \mathcal{F}\}$ is contained in some end $V\in Ends(X)$. It cannot be internal, so it is external.
Hence, $\{A_{F}:F\in \mathcal{F}\}\subset V\in cl(A_j)$ for some $j\in I$ which implies that $A_j\in V$ and $X\setminus A_j\in V$, a contradiction. 
\end{proof}

\begin{Theorem}
Given a scaled Hausdorff Boolean algebra $(X,S_X,BA_X)$, the scaled Boolean algebra $(\bar X,S_X,\overline{BA_X})$ is the unique compactification at infinity of $(X,S_X,BA_X)$. Moreover, the space of ends $Ends(X)$ is compact and totally disconnected.
\end{Theorem}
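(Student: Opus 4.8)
The plan is to separate the statement into four claims and dispatch them in order: that $(\bar X,S_X,\overline{BA_X})$ \emph{is} a compactification at infinity, that it is the \emph{only} one up to isomorphism, that $Ends(X)$ is compact, and that it is totally disconnected. Throughout I will use that, in the Hausdorff case, the points at infinity of $\bar X$ are exactly the elements of $Ends(X)$: the external ends appear as the added points $\bar X\setminus X=ExtEnds(X)$, while the internal ends appear as the points at infinity of $X$ itself, each internal end $V$ having $\bigcap V=\{x\}$ for a single point at infinity $x$ by Observation \ref{ObsAboutInternalEnds} together with the Hausdorff hypothesis. For existence, Lemma \ref{BasicPropOfAlgOfEnds}($a$) already supplies that $(\bar X,S_X,\overline{BA_X})$ is a scaled Hausdorff Boolean algebra, so I only need conditions 1 and 2 of Definition \ref{DefCompactificationAtInfOfScBA} plus compactness at infinity. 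Conditions 1 and 2 are immediate: since $\bar A\cap X=A$, both the scale $S_X$ and the Boolean algebra $BA_X$ restrict correctly, and if $\bar A$ meets $\bar X\setminus X$ at some $V$ then $A\in V$ is unbounded, so $A=\bar A\cap X\neq\emptyset$. Compactness at infinity is exactly Lemma \ref{BasicPropOfAlgOfEnds}($d$): a cover of the points at infinity of $\bar X$ by elements $\bar A_i=cl_{\bar X}(A_i)$ (part ($c$)) yields a finite $F$ with $X\setminus\bigcup_{i\in F}A_i\subset B$ for some $B\in S_X$, and since $\bar B=B$ and complements are preserved, $\bar X\setminus\bigcup_{i\in F}\bar A_i=\overline{X\setminus\bigcup_{i\in F}A_i}\subset B$ is bounded.

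For uniqueness, let $(\tilde X,S_X,\widetilde{BA_X})$ be any compactification at infinity. Proposition \ref{ObsInducedExtEnds} gives an injection $x\mapsto V_x$ from $\tilde X\setminus X$ into $ExtEnds(X)$, and I would extend it by the identity on $X$ to a candidate bijection $\tilde X\to\bar X$. The hard part will be surjectivity: given $V\in ExtEnds(X)$, I must produce $x\in\tilde X\setminus X$ with $V_x=V$. My approach is a compactness argument inside $\tilde X$. The family $\{cl_{\tilde X}(A):A\in V\}$ has the finite intersection property, because $cl_{\tilde X}(A_1\cap\dots\cap A_n)=cl_{\tilde X}(A_1)\cap\dots\cap cl_{\tilde X}(A_n)$ is the closure of the unbounded, hence nonempty, set $A_1\cap\dots\cap A_n$. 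If the total intersection were empty, the complements $\{\tilde X\setminus cl_{\tilde X}(A):A\in V\}$ would cover all points at infinity of $\tilde X$ by elements of $\widetilde{BA_X}$, and compactness at infinity would give finitely many with $A_1\cap\dots\cap A_n\subset B$ bounded, contradicting that $V$ is an end. Hence the intersection is nonempty; since $cl_{\tilde X}(A)\cap X=A$ and $V$ is external (so $\bigcap_{A\in V}A=\emptyset$), any point $x$ of the intersection lies in $\tilde X\setminus X$ and satisfies $V\subseteq V_x$, whence $V=V_x$ by maximality. Finally I would verify the bijection is an isomorphism of scaled Boolean algebras fixing $X$: each $\tilde A\in\widetilde{BA_X}$ equals $cl_{\tilde X}(A)$ for $A=\tilde A\cap X$, and $x\in cl_{\tilde X}(A)\iff A\in V_x\iff V_x\in\bar A$, so the bijection carries $\tilde A$ to $\bar A$, matching $\widetilde{BA_X}$ with $\overline{BA_X}$; the scales agree since the map is the identity on $X$.

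It remains to topologize $Ends(X)$ as the subspace of $\bar X$ consisting of its points at infinity and to establish its two topological properties. Compactness is again Lemma \ref{BasicPropOfAlgOfEnds}($d$): refining an arbitrary open cover to basic clopen traces $\bar A_i\cap Ends(X)$, part ($d$) produces a finite $F$ with $X\setminus\bigcup_{i\in F}A_i$ bounded, and as above the complement $\overline{X\setminus\bigcup_{i\in F}A_i}$ of $\bigcup_{i\in F}\bar A_i$ is contained in a bounded subset of $X$, hence misses $Ends(X)$, so these finitely many sets already cover $Ends(X)$. Total disconnectedness follows from the fact that $\overline{BA_X}$ is a basis of clopen sets for $\bar X$ by Lemma \ref{BasicPropOfAlgOfEnds}($a$), so its traces form a clopen basis on $Ends(X)$; combined with the Hausdorff property — each point at infinity is the intersection of the clopen sets containing it — any two distinct ends are separated by a clopen set, forcing every connected component to be a singleton. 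Thus $Ends(X)$ is a compact, totally disconnected (indeed Stone) space. I expect the surjectivity step in the uniqueness argument to be the only genuinely delicate point; the remaining steps are essentially bookkeeping on top of Lemma \ref{BasicPropOfAlgOfEnds} and Proposition \ref{ObsInducedExtEnds}.
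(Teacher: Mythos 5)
Your proposal is correct, and for the parts the paper actually argues --- that $(\bar X,S_X,\overline{BA_X})$ is compact at infinity, that it is a compactification at infinity, and that $Ends(X)$ is compact with a clopen basis --- you follow essentially the same route, resting everything on Lemma \ref{BasicPropOfAlgOfEnds}($c$),($d$). The genuine difference is the uniqueness claim: the paper's proof never addresses it, whereas you supply a complete argument. Your mechanism --- injectivity of $x\mapsto V_x$ from Proposition \ref{ObsInducedExtEnds}, then surjectivity by showing $\{cl_{\tilde X}(A):A\in V\}$ has the finite intersection property (via $cl(A_1\cap A_2)=cl(A_1)\cap cl(A_2)$ and unboundedness of finite intersections in an end) and that an empty total intersection would let compactness at infinity of $\tilde X$ trap a finite intersection of elements of $V$ inside a bounded set --- is exactly the kind of argument the theorem statement needs, and the final identification $\tilde A=cl_{\tilde X}(\tilde A\cap X)\mapsto\overline{\tilde A\cap X}$ correctly matches the two Boolean algebras. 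The one point to be careful about is your opening identification of the points at infinity of $\bar X$ with $Ends(X)$: external ends are literally the added points, but a point at infinity of $X$ itself need not be the intersection of an \emph{internal} end (the maximal extension of $\{A\in BA_X: x\in A\}$ may pick up sets missing $x$ and become external); fortunately nothing in your argument depends on this identification beyond what Lemma \ref{BasicPropOfAlgOfEnds}($d$) already delivers, so it is a cosmetic rather than substantive issue. Net effect: your write-up proves strictly more of the stated theorem than the paper's own proof does.
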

\begin{proof}
Given a family $\{cl(A_i)\}_{i\in I}$ of all elements of $\overline{BA_X}$ that covers all points at infinity of $(\bar X,S_X,\overline{BA_X})$, then it must cover $Ends(X)$. By part ($d$) of \ref{BasicPropOfAlgOfEnds}, there is a finite $F\subset I$ such that $X\setminus(\bigcup\limits_{i\in F} A_i)$ is bounded. Since $Ends(X)\subset \bigcup\limits_{i\in F} cl(A_i)$, $\bar X\setminus(\bigcup\limits_{i\in F} cl(A_i))$ is bounded. This shows that $Ends(X)$ is compact and $(\bar X,S_X,\overline{BA_X})$ is  compact at infinity. \ref{BasicPropOfAlgOfEnds} tells us that $(\bar X,S_X,\overline{BA_X})$ is indeed a compactification at infinity and hence $Ends(X)$ is Hausdorff. The family $\{cl(A)\cap Ends(X):A\in BA_X\}$ is a basis for $Ends(X)$ consisting of clopen subsets.
\end{proof}

\subsection{Freundenthal compactification at infinity}
In this part we present a general theory of Freundenthal ends.
\begin{Definition}\label{DefFreundenthalScaledSpace}
A \textbf{Freundenthal scaled space} is a triple $(X,S_X,C_X)$,
where $S_X$ is a scale covering $X$ and $C_X$ is a connectivity structure, with the property that for any bounded set $B$ of $X$ there are only finitely many unbounded components of $X\setminus B$ and the union of remaining components of $X\setminus B$ is bounded.

If $BA_X$ is the Boolean algebra induced by the connectivity structure $C_X$, the  compactification at infinity $(\bar X,S_X,\overline{BA_X})$ is called the \textbf{Freundenthal compactification at infinity} of $(X,S_X,BA_X)$.
\end{Definition}

\begin{Proposition}\label{FreundenthalEndsCorollary}
If a triple $(X,S_X,C_X)$ is a Freundenthal scaled space and $BA_X$
is the induced Boolean algebra, then for each end $V$ of $BA_X$ and each
bounded set $B$ of $X$ there is a unique unbounded component $C$ of $X\setminus B$
belonging to $V$. Moreover, if $X=\bigcup\limits_{n=1}^\infty B_n$, where $(B_n)_{n\in\N}$ is an increasing sequence, 
and $B_n$ is a basis of bounded subsets of $X$, then ends of $BA_X$
are in one-to-one correspondence with decreasing sequences $C_n$,
where $C_n$ is an unbounded component of $X\setminus B_n$.
\end{Proposition}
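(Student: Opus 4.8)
The plan is to prove the first assertion (for a fixed end and a fixed bounded set) first, and then bootstrap it to the bijection by isolating a \emph{dichotomy} that decides membership of an arbitrary algebra element in an end purely from its traces on the $C_n$.

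First I would settle the pointwise statement. Fix an end $V$ and a bounded $B$. By the Freundenthal property $X\setminus B$ has finitely many unbounded components $C_1,\dots,C_k$, each of which lies in $BA_X$, and $X\setminus\bigcup_{j=1}^k C_j$ is bounded. \textbf{Uniqueness} is immediate: distinct components are disjoint, so two of them cannot both lie in $V$, for their intersection would be empty, hence bounded, contradicting that finite intersections of elements of $V$ are unbounded. For \textbf{existence} I would argue by contradiction. If no $C_j$ belongs to $V$, then maximality of $V$ supplies, for each $j$, a finite $\mathcal F_j\subset V$ with $C_j\cap\bigcap\mathcal F_j$ bounded. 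Setting $A:=\bigcap_{j=1}^k\bigcap\mathcal F_j$, we get a finite intersection of elements of $V$, hence an unbounded member of $V$, with $A\cap C_j$ bounded for every $j$. But $A=(A\setminus\bigcup_j C_j)\cup\bigcup_j(A\cap C_j)$ is then a finite union of bounded sets, so $A$ is bounded, a contradiction. The monotonicity $C_{n+1}\subset C_n$ in the second part follows because $B_n\subset B_{n+1}$ forces $C_{n+1}$ to be a connected subset of $X\setminus B_n$, and $C_{n+1}\cap C_n$ is unbounded by the finite-intersection property, so $C_{n+1}$ lies in the component $C_n$ of $X\setminus B_n$.

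The technical heart of the second part is the following dichotomy. Given $A\in BA_X$, write $A\equiv\bigcup_{j}D_j$ mod $S_X$ with the $D_j$ components of $X\setminus B$ for some bounded $B$, and choose $n_0$ with $B\subset B_{n_0}$ (possible since $\{B_n\}$ is a basis of bounded sets). For $n\ge n_0$ the connected set $C_n\subset X\setminus B$ lies in a unique component $D$ of $X\setminus B$; if $D$ is one of the $D_j$, then $C_n\setminus A$ is bounded, whereas if $D$ is not among the $D_j$, then $C_n\subset X\setminus\bigcup_j D_j\equiv X\setminus A$ (using the complement lemma), so $A\cap C_n$ is bounded. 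These two alternatives are mutually exclusive, and because $(C_n)$ is decreasing they persist for all $n\ge n_0$. Hence for each $A\in BA_X$ exactly one of the conditions ``$A\cap C_n$ is unbounded for all $n$'' or ``$A\cap C_n$ is bounded for all large $n$'' holds.

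Finally I would set up the two maps. Let $\Phi(V):=(C_n)_n$, where $C_n$ is the unique unbounded component of $X\setminus B_n$ in $V$ from the first part; this is a decreasing sequence. Conversely, given a decreasing sequence $(C_n)$ of unbounded components of $X\setminus B_n$, set $\Psi\big((C_n)\big):=\{A\in BA_X:A\cap C_n\text{ is unbounded for all }n\}$. I would check $\Psi((C_n))$ is an end: its elements are unbounded; it has the finite-intersection property because for $A_1,\dots,A_m$ in it the dichotomy gives $C_n\setminus A_\ell$ bounded for large $n$, whence $C_n\setminus\bigcap_\ell A_\ell$ is bounded and $\bigcap_\ell A_\ell$ is unbounded; and it is maximal, since if $A\notin\Psi((C_n))$ then by the dichotomy $A\cap C_N$ is bounded for some $N$ while $C_N\in\Psi((C_n))$, so $A$ cannot be adjoined. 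Then $\Phi\circ\Psi=\mathrm{id}$ by the uniqueness clause of the first part, and $\Psi\circ\Phi=\mathrm{id}$ because $V\subset\Psi(\Phi(V))$ by the finite-intersection property and both are maximal, forcing equality. The step I expect to be the main obstacle is the combination of the existence half of the first assertion with the dichotomy lemma: both rely on converting the Freundenthal finiteness condition and the maximality of $V$ into a boundedness conclusion, and getting the complement bookkeeping exactly right in Case~2 of the dichotomy is where the care is needed.
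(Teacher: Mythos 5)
Your proposal is correct and follows essentially the same route as the paper: existence and uniqueness of the component in $V$ via the Freundenthal finiteness condition and a contradiction with the finite-intersection property, then the bijection with decreasing sequences of unbounded components. The only difference is one of detail — the paper obtains sets $A_i\in V$ actually disjoint from the $C_i$ and dismisses the bijection with ``from the above,'' whereas you work with bounded intersections and supply the dichotomy lemma and the explicit maps $\Phi$, $\Psi$ that the paper leaves implicit; your write-up is the more complete of the two.
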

\begin{proof}
Let $V$ be an end of $BA_X$, $B$ be a bounded subset of $X$, and $C_1,\dots, C_n$ be the unbounded components of $X\setminus B$. Clearly, each $C_i$ is contained in some end $V_i$ and that not two distinct components of $C_i$'s are in the same end.\\
Seeking contradiction, assume that $C_i\notin V$ for all $1\leq i\leq n$, and for each $1\leq i\leq n$, choose $A_i\in V$ such that $C_i\cap A_i=\emptyset$. Notice that $X\setminus \bigcup\limits_{i=1}^{n}C_i$ is bounded, $\bigcap\limits_{i=1}^{n}A_i\in V$ and $\bigcap\limits_{i=1}^{n}A_i\subset X\setminus \bigcup\limits_{i=1}^{n}C_i$, a contradiction.\\

Suppose $X=\bigcup\limits_{n=1}^\infty B_n$, where $B_n\subset B_{n+1}$ for all $n\in\N$ and $B_n$ is a basis of bounded subsets of $X$ (every bounded subset is contained in some $B_n$). From the above, every decreasing sequence $(C_n)_{n\in\N}$ of unbounded components of $X\setminus B_n$ is contained in exactly one end and every end contains exactly one decreasing sequence $(C_n)_{n\in\N}$ of unbounded components of $X\setminus B_n$.
\end{proof}

\begin{Lemma}\label{ExhaustingSequenceOfConnectedCompactWithConnecInterior}
Let $X$ be a Freudenthal space (see \ref{DefFreundenthalspace}). $X$ admits an exhausting sequence $(K_n)_{n\in \mathbb{N}}$ of compact subsets such that both $K_n$ and $int(K_n)$ are connected. 
\end{Lemma}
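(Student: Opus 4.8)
The plan is to construct the exhausting sequence inductively, upgrading an arbitrary exhausting sequence (which exists by $\sigma$-compactness and local compactness) into one whose members and their interiors are connected. Since $X$ is a Freudenthal space, it is connected, locally connected, locally compact, Hausdorff, and $\sigma$-compact. Local compactness plus $\sigma$-compactness guarantee the existence of \emph{some} exhausting sequence $(L_n)$ of compact sets with $L_n\subset \mathrm{int}(L_{n+1})$ and $\bigcup L_n=X$; the work is entirely in fixing up the connectivity of the members and their interiors.

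First I would record the two facts that do the heavy lifting. (1) In a connected, locally connected, locally compact Hausdorff space, every compact set $K$ is contained in a compact set $K'$ with connected interior: cover $K$ by finitely many connected open sets with compact closure (local connectedness and local compactness), and then connect these finitely many pieces by finitely many paths or, more carefully, by finitely many connected open "bridges," using that $X$ itself is connected so any two of the pieces can be joined by a chain of connected open sets; the union of the pieces and the bridges is a connected open set $U$ with compact closure, and I take $K':=\mathrm{cl}(U)$. (2) The closure of a connected set is connected, so if $U$ is a connected open set then $K':=\mathrm{cl}(U)$ is a connected compact set whose interior contains the connected set $U$. The subtle point is that I need $\mathrm{int}(K')$ itself connected, not merely that it contains a connected dense-ish piece; I would arrange this by choosing $U$ to be a connected open set that is \emph{regular} in the sense $U=\mathrm{int}(\mathrm{cl}(U))$, or by absorbing $\mathrm{int}(\mathrm{cl}(U))$ at the next inductive stage, so that local connectedness forces $\mathrm{int}(K')$ to be connected.

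With these tools the induction runs as follows. Start from an arbitrary exhausting sequence $(L_n)$. Having built $K_1\subset\cdots\subset K_n$ with each $K_j$ and $\mathrm{int}(K_j)$ connected and $K_j\subset\mathrm{int}(K_{j+1})$, I choose $K_{n+1}$ to be a compact set with connected interior, obtained via fact (1), large enough that $K_{n+1}\supset K_n\cup L_{n+1}$ and $K_n\subset\mathrm{int}(K_{n+1})$; taking $K_{n+1}=\mathrm{cl}(U)$ for a connected regular open $U$ constructed to contain $K_n\cup L_{n+1}$ makes $K_{n+1}$ connected as well, by fact (2) and local connectedness. Because $L_{n+1}\subset K_{n+1}$ for every $n$, the resulting sequence still exhausts $X$, and the nesting $K_n\subset\mathrm{int}(K_{n+1})$ is built in at each step.

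The main obstacle is ensuring that $\mathrm{int}(K_{n+1})$ is connected rather than just that $K_{n+1}$ is connected: passing to the closure of a connected open set is harmless for connectivity of $K_{n+1}$, but the interior can in principle acquire extra components or lose connectivity at the boundary. I expect the crux of the argument to be a careful use of local connectedness to guarantee that the connected open set $U$ I build can be taken with $\mathrm{int}(\mathrm{cl}(U))$ connected; concretely, I would connect the finitely many basic connected open pieces by connected open bridges (possible since $X$ is connected and locally connected, so the relation "joined by a chain of connected open sets" is both open and closed, hence all of $X$), take $U$ to be their union, and verify that $\mathrm{int}(\mathrm{cl}(U))\supset U$ is connected because it is an open connected-by-chains set. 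This is exactly the place where all four hypotheses on a Freudenthal space are simultaneously needed, so I would treat it as the technical heart of the lemma and prove the helper statement (1)–(2) as a standalone claim before running the induction.
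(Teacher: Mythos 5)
Your proposal follows essentially the same route as the paper: build an increasing sequence of connected, relatively compact open sets $U_n$ with $\mathrm{cl}(U_n)\subset U_{n+1}$ (using finitely many connected relatively compact open sets, which exist by local compactness and local connectedness, to pass from one stage to the next), set $K_n:=\mathrm{cl}(U_n)$, and observe that $\bigcup U_n$ is clopen, hence all of $X$ by connectedness. The one place where you stop short is exactly the point you flag as the technical heart: the connectedness of $\mathrm{int}(K_n)=\mathrm{int}(\mathrm{cl}(U_n))$. Neither regular openness of $U_n$ nor local connectedness is needed here; the resolution is the elementary sandwich fact that any set $B$ with $C\subset B\subset \mathrm{cl}(C)$ and $C$ connected is itself connected. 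Since $U_n\subset \mathrm{int}(\mathrm{cl}(U_n))\subset \mathrm{cl}(U_n)$ and $U_n$ is connected, $\mathrm{int}(K_n)$ is connected with no further work, and this is precisely what the paper's displayed chain of inclusions $U_n\subset \mathrm{int}(K_n)\subset K_n\subset U_{n+1}$ is invoking. One further simplification available to you: in the inductive step you do not need open ``bridges'' between the covering pieces, because you may require each piece to meet the connected set $\mathrm{cl}(U_n)$, which already forces their union to be connected (bridges, or a chain argument, are only needed to get the very first nonempty connected piece, and there a single basic open set suffices). With the sandwich lemma inserted, your argument is complete and matches the paper's.
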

\begin{proof}
Since $X$ is $\sigma$-compact, it can be written as a countable union of compact subsets, say $X:=\bigcup\limits_{n\in\mathbb{N}} L_n$. As $X$ is locally compact and locally connected, each $L_n$ can be covered by finitely many open connected and relatively compact subsets. Therefore $X$ is a countable union of open connected relatively compact subsets, say $X:=\bigcup\limits_{n\in\mathbb{N}} O_n$. Now, we inductively construct an increasing sequence $(U_n)_{n\in \mathbb{N}\cup\{0\}}$ of open connected relatively compact subsets satisfying $cl_{X}(U_n)\subset U_{n+1}$ for all $n\in \mathbb{N}\cup\{0\}$, and $X:=\bigcup\limits_{n\in\mathbb{N}} U_n$. Put $U_0:=\emptyset$, and assume that $U_1,\ldots,U_n$ are open connected relatively compact subsets such that $cl_{X}(U_i)\subset U_{i+1}$ for all $1\leq i<n$. Since $cl_{X}(U_n)$ is compact, there exist $O_{n_1},\ldots,O_{n_m}$ open connected relatively compact subsets such that $cl_{X}(U_n)\subset \bigcup\limits_{i=1}^{m}O_{n_i}$ and $cl_{X}(U_n)\cap O_{n_i}\neq\emptyset$ for all $1\leq i\leq m$. Put $U_{n+1}:=\bigcup\limits_{i=1}^{m}O_{n_i}$; since $X$ is connected, $cl_{X}(U_n)$ is properly contained in $U_{n+1}$. Moreover, $U_{n+1}$ is open connected relatively compact subset. Finally, since $cl_{X}(\bigcup\limits_{n\in\mathbb{N}} U_n)=\bigcup\limits_{n\in\mathbb{N}} cl_{X}(U_n)=\bigcup\limits_{n\in\mathbb{N}} U_n$, and $X$ is connected, so $X=\bigcup\limits_{n\in\mathbb{N}} U_n$.

Now, put $K_{n}:=cl_{X}(U_n)$ for all $n\in\N$. Notice that for any open subset $U\subset X$, one has $U\subset int(cl_X(U))$. Notice that: $$U_n\subset int(cl_X(U_n))=int(K_n)\subset K_n=cl_X(U_n)\subset U_{n+1}\subset int(cl_X(U_{n+1}))=int(K_{n+1}),$$ and therefore, $int(K_n)$ is connected for all $n\in\N$, and $(K_n)_{n\in \mathbb{N}}$ is an exhausting sequence consists of compact connected subsets. 
\end{proof}

\begin{Theorem}
Let $X$ be a Freudenthal space. There exists a scale $S_X$ and a connectivity structure $C_X$ such that the ends of $(X,S_X,C_X)$ are in one-to-one correspondence with the Freundenthal ends of $X$.
\end{Theorem}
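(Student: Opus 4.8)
The plan is to build the scaled structure directly from a well-behaved exhaustion and then read off the correspondence from Proposition \ref{FreundenthalEndsCorollary}. First I would apply Lemma \ref{ExhaustingSequenceOfConnectedCompactWithConnecInterior} to fix an exhausting sequence $(K_n)_{n\in\N}$ of compact connected sets with connected interiors and $K_n\subseteq \mathrm{int}(K_{n+1})$. I then define the scale $S_X:=\{A\subseteq X: A\subseteq K_n \text{ for some } n\}$, i.e. the relatively compact subsets; this is a bornology, it covers $X$ (singletons are relatively compact since $X$ is locally compact), and $(K_n)$ is an increasing basis of bounded sets, because the compact closure of any bounded set is covered by finitely many of the open sets $\mathrm{int}(K_n)$. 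For the connectivity structure I take $C_X$ to be the family of topologically connected subsets of $X$; this is a connectivity structure whose $C_X$-components of a subset are exactly its topological components. The induced Boolean algebra $BA_X$ of Proposition \ref{BAInducedByConnectivity} will then be the algebra of ends, once the hypothesis of that proposition is verified below.

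The heart of the argument, and the step I expect to be the main obstacle, is to show that $(X,S_X,C_X)$ is a Freundenthal scaled space in the sense of Definition \ref{DefFreundenthalScaledSpace}: for every bounded $B$, the set $X\setminus B$ has only finitely many unbounded components and the union of the remaining components is bounded. This is precisely where local compactness and local connectedness enter, the subtlety being that a bounded $B$ need not be closed.

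To carry this out I would fix a bounded $B$, choose $m$ with $cl_X(B)\subseteq V:=\mathrm{int}(K_m)$, and set $S:=cl_X(V)\setminus V$, a compact ``shell'' disjoint from $cl_X(B)$ and hence contained in the open set $X\setminus cl_X(B)$. The key local observation is that every component $C$ of $X\setminus B$ satisfies $\partial C\subseteq cl_X(B)$: if a frontier point of $C$ lay in the open set $X\setminus cl_X(B)$, local connectedness would supply a connected neighborhood of it inside $X\setminus B$, forcing the point into $C$. Hence $\partial C\neq\emptyset$ (else $C$ would be clopen, so $C=X$) yields a point of $C$ in $V$, and if $C$ is unbounded then $C\not\subseteq cl_X(V)$, so the connected set $C$ must meet the shell $S$. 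Were there infinitely many distinct unbounded components, choosing one point of each in the compact set $S$ would give an accumulation point $q\in S\subseteq X\setminus cl_X(B)$; a connected neighborhood of $q$ inside the open set $X\setminus cl_X(B)$ lies in a single component yet meets infinitely many of the chosen components, forcing them to coincide, a contradiction. The identical accumulation argument shows that only finitely many bounded components can meet $X\setminus cl_X(V)$, while every other bounded component lies in the compact set $cl_X(V)$; therefore the union of all bounded components is contained in a finite union of bounded sets and is bounded.

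Finally I would extract the correspondence. Since $(X,S_X,C_X)$ is now a Freundenthal scaled space and $(K_n)$ is an increasing basis of bounded sets, Proposition \ref{FreundenthalEndsCorollary} identifies the ends of $BA_X$ with the decreasing sequences $(C_n)$ in which $C_n$ is an unbounded $C_X$-component of $X\setminus K_n$. Because $C_X$-components are the topological components, $(K_n)$ is an exhausting sequence of compacta, and every term of a Freudenthal end (Definition \ref{DefFreundenthalEnds}) is automatically unbounded (a decreasing sequence of nonempty components through the exhaustion can never be trapped in a fixed $K_j$, since then it would meet $K_j\cap(X\setminus K_j)=\emptyset$), these decreasing sequences are exactly the Freudenthal ends of $X$ computed with the exhaustion $(K_n)$. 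As the Freudenthal end set does not depend on the chosen exhaustion, this produces the desired one-to-one correspondence with $Ends(X)$.
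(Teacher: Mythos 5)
Your proof is correct, but it departs from the paper's at the key choice of connectivity structure. The paper also takes $S_X$ to be the relatively compact subsets and starts from the same exhaustion $(K_n)$ supplied by Lemma \ref{ExhaustingSequenceOfConnectedCompactWithConnecInterior}, but instead of topological connectedness it takes $C_X$ to be the connectivity structure generated by the components of the ``annuli'' $U_n:=int(K_{n+2})\setminus K_n$, so that $C_X$-connectedness means chainability through these open, connected, relatively compact pieces; the correspondence with the Freudenthal ends is then essentially asserted without further verification. Your choice of $C_X$ as the family of topologically connected sets is the more natural one, and the price you pay --- and duly pay --- is the verification that $(X,S_X,C_X)$ is a Freundenthal scaled space in the sense of Definition \ref{DefFreundenthalScaledSpace}, i.e.\ that for every bounded $B$ (not necessarily closed) the set $X\setminus B$ has only finitely many unbounded components and the bounded ones have bounded union. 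Your compact-shell argument (the shell $S=cl_X(V)\setminus V$ with $cl_X(B)\subset V=int(K_m)$), combined with the observation $\partial C\subset cl_X(B)$ coming from local connectedness, is the classical argument and is sound; so is the closing step that the Freudenthal ends computed from $(K_n)$ automatically consist of unbounded components and that the end set is independent of the exhaustion, after which Proposition \ref{FreundenthalEndsCorollary} finishes the job. In short, you prove strictly more than the paper writes down: the paper's annular connectivity structure lets it lean implicitly on the openness and relative compactness of the generating sets, whereas your route makes the finiteness-of-unbounded-components lemma explicit, which is the substantive content either way.
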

\begin{proof}
Let $S_X$ be the family of all relatively compact subsets of $X$, and $K_0:=\emptyset$. By \ref{ExhaustingSequenceOfConnectedCompactWithConnecInterior}, we can find an exhausting sequence $(K_n)_{n\in \mathbb{N}\cup\{0\}}$ of compact subsets such that both $K_n$ and $int(K_n)$ are connected. Without loss of generality, we may assume that every component of $X\setminus K_1$ is unbounded component. Put $U_n:=int(K_{n+2})\setminus K_n$ and let $C_X$ to be the connectivity structure generated by $\mathcal{U}:=\{U\subset X:U \text{ is a component of } U_n, \text{ for some }n\in\N\}$, i.e. $C\in C_X$ if and only if $C$ is a singleton or every two points $x,y$ of $C$ can be joined by a chain $C_i\in \mathcal{U}$, $1\leq i\leq n$, i.e. $x\in C_1$, $y\in C_n$,
and $C_i\cap C_{i+1}\ne\emptyset$ for each $i < n$. Since $X$ is locally connected, every $U\in\mathcal{U}$ is open connected and relatively compact subset. The ends of $(X,S_X,BA_X)$, where $BA_X$ is the Boolean algebra induced by the connectivity structure $C_X$, are in one-to-one correspondence with the Freundenthal end of $X$.
\end{proof}

\section{Scaled linear spaces}
In this section we introduce analogs of eigenvectors in the context of scaled Boolean algebras.
\begin{Definition}
Given a scaled Boolean algebra $(X,S_X,BA_X)$, a function $f:BA_X \to BA_X $ is a \textbf{$S_X$-linear operator} if:\\
$1.$ $f(C\cup D)\equiv f(C)\cup f(D)$ mod $S_X$
for any $C,D\in BA_X $.\\
$2.$ $f(B)$ is bounded if $B$ is bounded.\\
$3.$ $f(X)\equiv X$ mod $S_X$.
\end{Definition}

\begin{Definition}
A \textbf{scaled linear space} is a quadruple $(X,S_X,BA_X,\mathcal{F})$, where $(X,S_X,BA_X)$ is a scaled Boolean algebra with no internal ends and $\mathcal{F}$ is a non-empty family of $S_X$-linear operators on $BA_X$. When $BA_X:=2^X$ is the powerset Boolean algebra, we write $(X,S_X,\mathcal{F})$ for simplicity. 
\end{Definition}

It is well-known that, for a symmetric 2x2 matrix $M$, if $v$ is an eigenvector of $M$, then so is any $w\ne \vec{0}$ perpendicular to $v$. The following is an analog of that phenomenon.
\begin{Definition}
Given a $S_X$-linear operator $f$, 
$f$ is \textbf{symmetric} if, for any $A\in BA_X$, $f(A)\equiv A$ mod $S_X$ implies $f(X\setminus A)\equiv X\setminus A$ mod $S_X$.

Given a family $\mathcal{F}$ of $S_X$-linear operators, $\mathcal{F}$ is \textbf{symmetric}
if for any $A\in BA_X$, $f(A)\equiv A$ mod $S_X$ for all $f\in \mathcal{F}$ implies
$f(X\setminus A)\equiv X\setminus A$ mod $S_X$ for all $f\in \mathcal{F}$.

Given a family $\mathcal{F}$ of $S_X$-linear operators, an element $A$ of $BA_X$ is an \textbf{eigenset} of $\mathcal{F}$ if $f(A)\equiv A$ mod $S_X$ and $f(X\setminus A)\equiv X\setminus A$ mod $S_X$, for all $f\in\mathcal{F}$. 

\end{Definition}

\begin{Proposition} If $C$ and $D$ are eigensets of $\mathcal{F}$, then so are $C\cup D, D\setminus C$ and $C\cap D$.
\end{Proposition}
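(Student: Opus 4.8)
The plan is to separate the purely formal content from the single step that genuinely uses the eigenset hypothesis. Two facts are immediate. First, eigensets are closed under complementation: the defining condition ``$f(A)\equiv A$ mod $S_X$ and $f(X\setminus A)\equiv X\setminus A$ mod $S_X$ for all $f\in\mathcal{F}$'' is symmetric in $A$ and $X\setminus A$, so $X\setminus C$ and $X\setminus D$ are again eigensets. Second, linearity disposes of unions: if $f(C)\equiv C$ and $f(D)\equiv D$ mod $S_X$, then property $1$ of an $S_X$-linear operator gives $f(C\cup D)\equiv f(C)\cup f(D)\equiv C\cup D$ mod $S_X$, since $\equiv$ mod $S_X$ is transitive and compatible with unions.

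Granting these two facts, I would reduce the whole statement to one claim: for eigensets $C,D$ and every $f\in\mathcal{F}$, one has $f(C\cap D)\equiv C\cap D$ mod $S_X$. Indeed, $C\cap D$ is then an eigenset, because its complement $(X\setminus C)\cup(X\setminus D)$ is covered by the union remark applied to the eigensets $X\setminus C$ and $X\setminus D$. Dually, $C\cup D$ is an eigenset: its positive part is the union remark, and its complement $(X\setminus C)\cap(X\setminus D)$ is the intersection claim applied to $X\setminus C$ and $X\setminus D$. Finally $D\setminus C=D\cap(X\setminus C)$ is the intersection of the eigensets $D$ and $X\setminus C$, hence an eigenset by the same claim.

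To prove the intersection claim I would partition $X$ into the four Boolean combinations $P_1:=C\cap D$, $P_2:=C\setminus D$, $P_3:=D\setminus C$, $P_4:=X\setminus(C\cup D)$, all of which lie in $BA_X$, and abbreviate $Q_i:=f(P_i)$. Feeding the four eigenset equalities $f(C)\equiv C$, $f(D)\equiv D$, $f(X\setminus C)\equiv X\setminus C$, $f(X\setminus D)\equiv X\setminus D$ (all mod $S_X$) into property $1$ produces the equivalences $Q_1\cup Q_2\equiv P_1\cup P_2$, $Q_1\cup Q_3\equiv P_1\cup P_3$, $Q_3\cup Q_4\equiv P_3\cup P_4$ and $Q_2\cup Q_4\equiv P_2\cup P_4$. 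The first two show that $Q_1$ sits, modulo a bounded set, inside $(P_1\cup P_2)\cap(P_1\cup P_3)=P_1$, so $Q_1\setminus P_1$ is bounded; the first and fourth likewise give $Q_2\setminus P_2$ bounded. For the reverse inclusion, the first equivalence shows $P_1\setminus Q_1$ sits, modulo a bounded set, inside $Q_2$, while $Q_2\setminus P_2$ bounded together with $P_1\cap P_2=\emptyset$ force $P_1\cap Q_2$ to be bounded; hence $P_1\setminus Q_1$ is bounded. Combining the two inclusions, $Q_1\equiv P_1$, that is $f(C\cap D)\equiv C\cap D$ mod $S_X$.

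I expect the intersection claim to be the one real obstacle. An $S_X$-linear operator is required to respect only finite unions, not intersections or complements, so $f(C\cap D)\equiv C\cap D$ cannot be read off from property $1$ alone; what makes the partition bookkeeping close is precisely the extra eigenset information about the complements $X\setminus C$ and $X\setminus D$. All the error terms stay bounded because $S_X$ is closed under finite unions, so the finitely many bounded sets accumulated along the way combine to a single bounded set.
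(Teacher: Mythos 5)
Your proof is correct, and it is in fact more careful than the paper's own. The paper's argument is three lines: linearity gives $f(C\cup D)\equiv f(C)\cup f(D)\equiv C\cup D$; since the definition of eigenset is symmetric in $A$ and $X\setminus A$, the sets $X\setminus C$ and $X\setminus D$ are eigensets, ``hence their union and its complement $C\cap D$ are eigensets''; finally $D\setminus C=D\cap(X\setminus C)$. Read literally, the middle step is where the real work is hidden: to certify $(X\setminus C)\cup(X\setminus D)$ as an eigenset one must also check $f$ on its complement, which is exactly $C\cap D$ --- the very set being deduced --- and property $1$ of an $S_X$-linear operator says nothing about intersections. You identified this as the one genuine obstacle and supplied the missing lemma $f(C\cap D)\equiv C\cap D$ via the four-block partition $P_1,\dots,P_4$: the four equivalences coming from $C$, $D$, $X\setminus C$, $X\setminus D$ do force both $Q_1\setminus P_1$ and $P_1\setminus Q_1$ to be bounded, and your bookkeeping there is sound (all the $P_i$ lie in $BA_X$, and finitely many bounded errors accumulate to a bounded set since $S_X$ is closed under finite unions). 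Your outer reductions --- complement-closure from the symmetry of the definition, unions from linearity, everything else from the intersection claim --- follow the same skeleton as the paper; what you add is the one step the paper elides, and in exchange for the extra length you get an argument that manifestly needs no symmetry hypothesis on $\mathcal{F}$.
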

\begin{proof}
$f(C\cup D)\equiv f(C)\cup f(D)\equiv C\cup D$ mod $S_X$ for any $f\in \mathcal{F}$.

According to our assumptions both $X\setminus C$ and $X\setminus D$ are eigensets of $\mathcal{F}$. Hence their union and its complement $C\cap D$ are eigensets of $\mathcal{F}$. Finally, $D\setminus C=D\cap (X\setminus C)$.
\end{proof}

\begin{Corollary}
Given a scaled linear space $(X,S_X,BA_X,\mathcal{F})$, the family of eigensets of $\mathcal{F}$ induces a scaled Boolean algebra $(X,S_X,BA_{\mathcal{F}})$.
\end{Corollary}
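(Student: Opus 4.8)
The plan is to verify the two requirements in Definition \ref{SBAandCompactAtInfinity}: that $BA_{\mathcal F}$ (the family of eigensets of $\mathcal F$) is a Boolean algebra of subsets of $X$, and that $S_X\subset BA_{\mathcal F}$. The preceding Proposition already supplies closure under the operations $\cup$, $\cap$ and $\setminus$, so what remains is to check closure under complementation, that the units $\emptyset$ and $X$ are eigensets, and that every bounded set is an eigenset; the last of these is the only point that genuinely requires the operator axioms and is where I expect the real (if modest) work to lie.

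First I would observe that the defining condition of an eigenset, namely $f(A)\equiv A$ and $f(X\setminus A)\equiv X\setminus A$ mod $S_X$ for all $f\in\mathcal F$, is manifestly symmetric under $A\mapsto X\setminus A$. Hence the complement of an eigenset is again an eigenset, giving closure under complementation for free. Combined with the preceding Proposition, this shows that $BA_{\mathcal F}$ is closed under all the Boolean operations.

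Next I would handle the units. For $\emptyset$: since $\emptyset$ is bounded, axiom $2$ gives that $f(\emptyset)$ is bounded, hence $f(\emptyset)\equiv\emptyset$ mod $S_X$; and axiom $3$ gives $f(X)\equiv X$ mod $S_X$. Thus $\emptyset$ is an eigenset, and by the complementation step so is $X$. This also settles half of the last requirement.

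The main step is to show $S_X\subset BA_{\mathcal F}$. Let $B$ be bounded. From axiom $2$, $f(B)$ is bounded, so $f(B)\equiv\emptyset\equiv B$ mod $S_X$. For the complement I would write $X=(X\setminus B)\cup B$ and apply axiom $1$ to obtain $f(X)\equiv f(X\setminus B)\cup f(B)$ mod $S_X$; since $f(B)$ is bounded it is absorbed in the equivalence, and since $f(X)\equiv X$ this yields $f(X\setminus B)\equiv X\equiv X\setminus B$ mod $S_X$. Hence $B$ is an eigenset. The only care needed throughout is the bookkeeping of the relation $\equiv$ mod $S_X$ — that bounded sets are equivalent to $\emptyset$, that $\equiv$ is transitive, and that adjoining a bounded set preserves the equivalence class — all of which are immediate from the definition. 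I would also remark that this particular statement does not seem to invoke the hypothesis that $(X,S_X,BA_X)$ has no internal ends; that assumption is part of the ambient definition of a scaled linear space but is not used here.
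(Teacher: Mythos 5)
Your proof is correct and follows the only natural route: the paper itself offers no proof of this Corollary, treating it as immediate from the preceding Proposition, and your argument simply supplies the routine verifications (complementation via the symmetry of the eigenset definition, the units, and $S_X\subset BA_{\mathcal F}$ via axioms 2 and 3) that the authors leave implicit. Your closing remark that the ``no internal ends'' hypothesis is not used here is also accurate.
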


\begin{Definition}
Given a scaled linear space $(X,S_X,BA_X,\mathcal{F})$, an \textbf{eigenset at infinity} of $\mathcal{F}$ is an end of $(X,S_X,BA_{\mathcal{F}})$.
\end{Definition}

\begin{Definition}
Given a scaled linear space $(X,S_X,BA_X,\mathcal{F})$, and $Y\subset X$. A scaled linear space $(Y,B_Y,BA_Y,\mathcal{F}_Y)$ is a \textbf{scaled linear subspace} of $(X,S_X,\mathcal{F})$ if:\\
$1.$ $(Y,B_Y,BA_Y)$ is a scaled sub-Boolean algebra of $(X,S_X,BA_X)$.\\
$2.$ $(Y,B_Y,BA_{\mathcal{F}_Y})$ is a scaled sub-Boolean algebra of $(X,S_X,BA_{\mathcal{F}_X})$.\\
$3.$ Every element $f\in\mathcal{F}_Y$ admits an extensions $f_X\in\mathcal{F}$ 
such that $f(A)= f_X(A)\cap Y$ for each $A\subset Y$.
\end{Definition}

\begin{Definition}
A scaled linear space $(X,S_X,BA_X,\mathcal{F})$ is \textbf{compact at infinity} if it has no eigensets at infinity. Equivalently, the induced scaled Boolean algebra $(X,S_X,BA_{\mathcal{F}})$ is compact at infinity.
\end{Definition}

\begin{Observation}
A scaled Boolean algebra $(X,S_X,BA_X)$ with no internal ends is compact at infinity if and only if the scaled Boolean algebra $(X,S_X,BA_{\mathcal{F}})$ is compact at infinity for any family $\mathcal{F}$ of $S_X$-linear operators.
\end{Observation}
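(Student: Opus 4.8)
The plan is to prove the two implications separately, leaning on two elementary facts. First, whether a point $x\in X$ is a point at infinity depends only on the scale $S_X$ (namely on whether $\{x\}$ is unbounded), so $(X,S_X,BA_X)$ and $(X,S_X,BA_{\mathcal{F}})$ have exactly the same points at infinity. Second, $BA_{\mathcal{F}}$ is always a sub-Boolean algebra of $BA_X$, since every eigenset of $\mathcal{F}$ is by definition an element of $BA_X$.

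For the forward implication I would assume $(X,S_X,BA_X)$ is compact at infinity and fix an arbitrary family $\mathcal{F}$ of $S_X$-linear operators. Let $\{A_i\}_{i\in I}\subset BA_{\mathcal{F}}$ be a family covering all points at infinity. Each $A_i$ also lies in $BA_X$, and the set of points at infinity is the same for both algebras, so $\{A_i\}_{i\in I}$ is a cover of the points at infinity of $(X,S_X,BA_X)$ by elements of $BA_X$. Compactness at infinity of $BA_X$ then yields a finite $F\subset I$ with $X\setminus\bigcup_{i\in F}A_i$ bounded, and this finite subfamily still consists of eigensets. Hence $(X,S_X,BA_{\mathcal{F}})$ is compact at infinity. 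The content of this direction is simply that compactness at infinity is inherited by every sub-Boolean algebra carrying the same scale.

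For the backward implication the key device is the identity operator $\mathrm{id}$ on $BA_X$. I would first verify that $\mathrm{id}$ is an $S_X$-linear operator: conditions 1--3 hold trivially, because the identity preserves unions on the nose, fixes every bounded set, and fixes $X$. The standing hypothesis that $(X,S_X,BA_X)$ has no internal ends guarantees that $(X,S_X,BA_X,\{\mathrm{id}\})$ is a legitimate scaled linear space. Now every $A\in BA_X$ satisfies $\mathrm{id}(A)\equiv A$ mod $S_X$ and $\mathrm{id}(X\setminus A)\equiv X\setminus A$ mod $S_X$, so every element of $BA_X$ is an eigenset of $\{\mathrm{id}\}$; that is, $BA_{\{\mathrm{id}\}}=BA_X$. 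Specializing the hypothesis to the particular family $\mathcal{F}=\{\mathrm{id}\}$ then gives that $(X,S_X,BA_{\{\mathrm{id}\}})=(X,S_X,BA_X)$ is compact at infinity, which is exactly what we want.

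I do not expect a genuine obstacle here, which is consistent with the result being phrased as an Observation: both directions reduce to unwinding the definitions. The only step requiring a moment of thought is the backward direction, where one must exhibit a single family $\mathcal{F}$ whose eigensets exhaust $BA_X$; the identity operator accomplishes this, so no more elaborate construction is needed.
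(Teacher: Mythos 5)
Your proposal is correct and follows essentially the same route as the paper: the forward direction is the observation that compactness at infinity passes to any sub-Boolean algebra over the same scale (since $BA_{\mathcal{F}}\subset BA_X$ and the points at infinity depend only on $S_X$), and the backward direction specializes to $\mathcal{F}=\{\mathrm{id}_X\}$, for which every element of $BA_X$ is an eigenset. The paper's proof is just a two-line compression of exactly these two steps.
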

\begin{proof}
Assume that $(X,S_X,BA_X)$ is compact at infinity and $\mathcal{F}$ is any family  of $S_X$-linear operators. Since $BA_{\mathcal{F}}\subset BA_X$, $(X,S_X,BA_{\mathcal{F}})$ must be compact at infinity. Conversely, take $\mathcal{F}:=\{id_X\}$.
\end{proof}

\begin{Example}\label{GroupActionsExample}
Suppose a group $G$ acts on a set $X$. Define $S_X$ as the family of all finite subsets of $X$, $BA_X$ as all subsets of $X$.
We define $ \mathcal{F}$ as the family $f_g, g\in G$, where
$f_g(A):=A\cdot g$.

$\mathcal{F}$ is is a family of $S_X$-linear operators that are symmetric and its eigensets consist of all almost invariant subsets of $X$.
\end{Example}

\begin{proof}
Recall $A$ is almost invariant if the symmetric difference $A\Delta (A\cdot g)$ is finite for each $g\in G$. 
Since $(A^c\Delta (A^c\cdot g^{-1}))\cdot g=(A\cdot g\setminus A)\cup (A\cdot g^{-1}\setminus A)\cdot g$ for all $A\subset X$ and all $g\in G$,
$A$ is almost invariant if and only if $A^c$ is almost invariant.
Consequently, any almost invariant subset $A$ of $X$ is an eigenset of $f_g$ for all $g\in G$.

Suppose $A$ is an eigenset of $\mathcal{F}$ but $A^c\Delta (A^c\cdot g)$ is not finite for some $g\in G$. Without loss of generality, assume $D:=A^c\cdot g\setminus A^c$ is not finite. Notice $D\subset A$
and $D\cdot g^{-1}\subset A^c$,
so no element of $D$ can belong to $A\cap (A\cdot g)$ resulting in $D\subset A\Delta (A\cdot g)$, a contradiction. Thus, each $f_g$ is symmetric. 
\end{proof}

\begin{Example}
Consider a scaled space $(X,S_X,C_X)$ equipped with a connectivity structure such that for each bounded set $B$ the union of bounded $C_X$-components
of $X\setminus B$ is bounded. Let $BA_X$ be the Boolean algebra induced by $C_X$.
Given $B\in S_X$ and given $A\subset X$, put:$$\mathcal{U}_B^A:=\{C\subset X\setminus B:C\text{ is a component of }X\setminus B\text{ and } A\cap C\text{ is unbounded }\};$$ and define:
\begin{equation*}
f_B(A)=\begin{cases}
         A\quad &\text{if} \, A\in BA_X \\
         
         st(A,\mathcal{U}_B^A) \quad &\text{if} \, A\notin  BA_X.
     \end{cases}
\end{equation*}

$\mathcal{F}:=\{f_B\}_{B\in S_X}$ is a family of $S_X$-linear operators and its eigensets
are exactly elements of $BA_X$.
\end{Example}
\begin{proof}
Suppose $B, B'\in S_X$.\\
\textbf{Claim 1}: If $A$ is equivalent mod $S_X$ to the $C$ union of some unbounded components
of $X\setminus B$ and $B\subset B'$, then $A$ is equivalent mod $S_X$ to a union of some unbounded components
of $X\setminus B'$.\\
\textbf{Proof of Claim 1:} Each unbounded component of $X\setminus B'$
is contained in a unique unbounded component of $X\setminus B$.
Let $C'$ be the union of all unbounded components of $X\setminus B'$
contained in $C$. Notice $A\equiv C'$ mod $S_X$. 

\textbf{Claim 2}: $f_B(A_1\cup A_2)\equiv f_B(A_1)\cup f_B(A_2)$ mod $S_X$.\\
\textbf{Proof of Claim 2:} The only interesting case is that of $f_B(A_1)=A_1$
and $A_2$ not being equivalent mod $S_X$ to a union of unbounded components of any $X\setminus B'$. Pick $B'\in S_X$ containing $B$
such that $A_1$ is equivalent to a union of some unbounded components of
$X\setminus B'$. Notice that $f_B(A_1)\cup f_B(A_2)$
is equivalent to the union of unbounded components of $X\setminus B'$
that intersect $A_1\cup A_2$ non-trivially. The same can be said about
 $f_B(A_1\cup A_2)$.

Notice $f_B(D)=\emptyset$ if $D$ is bounded. Hence, each $f_B$ is $S_X$-linear. By the way the family $\mathcal{F}$ is defined, $BA_X=BA_{\mathcal{F}}$.
\end{proof}

\begin{Example}\label{LSExampleOfSLS}
Suppose $X$ is a large scale space. Define $S_X$ as the family of all bounded subsets of $X$.
Given a uniformly bounded cover $\mathcal{U}$ of $X$ and given $A\subset X$ define $f_{\mathcal{U}}(A)$ as the star of $A$ with respect to $\mathcal{U}$.

$ \mathcal{F}$ is a symmetric family of $S_X$-linear operators, its eigensets consist of coarse clopen subsets of $X$ and its eigensets at infinity consist of coarse ends of $X$.
\end{Example}

\begin{proof}
Recall that a large scale space is a set $X$ equipped with a family of covers
(called \textbf{uniformly bounded covers}) $\mathcal{U}$ that is closed under refinement and under taking stars of uniformly bounded covers.

Coarse open subsets $A$ of $X$ (see \cite{MRD}) are those
for which $st(A,\mathcal{U})\cap st(X\setminus A,\mathcal{U})$ is bounded for each uniformly bounded cover $\mathcal{U}$. That is the same as $f_{\mathcal{U}}(A)\equiv A$ for each $\mathcal{U}$.
\end{proof}

\begin{Example}
Given a locally compact topological group $G$, we define $ \mathcal{F}$ as the family $f_K$, $K\subset G$ is compact, as follows:
$f_K(A)=A\cdot K$.

$B_G$ consists of all pre-compact open subsets of $G$.

$ \mathcal{F}$ is symmetric and its eigensets at infinity consist of ends of $X$.
\end{Example}
\begin{proof}
Similar to Example \ref{GroupActionsExample}. 
\end{proof}

\begin{Definition}
Given a scaled linear space $(X,S_X,\mathcal{F})$ and its ends $Ends(X)$, define $(\bar X,S_X,\overline{\mathcal{F}})$ as follows:\\
$1.$ $\bar X:=X\cup Ends(X)$,\\
$2.$ Given $f\in \mathcal{F}$ define an extension $\bar f:2^{\bar X}\to 2^{\bar X}$ as follows:\\
\begin{equation*}
\bar f(A)=\begin{cases}
         f(A)\cup\{V\in Ends(X):A\cap D \text{ is unbounded, for all } D\in V\}\quad &\text{if} \, A\subset X \\
         \{V\in Ends(X):\forall D\in V,\exists V_D\in A \text{ such that } D\in V_D\}\quad &\text{if} \, A\subset Ends(X) \\
         \bar f(A\cap X)\cup\bar f(A\cap Ends(X))  \quad &\text{if} \, A\subset\bar X,\\
     \end{cases}
\end{equation*}
define $\overline{\mathcal{F}}:=\{\bar f:f\in \mathcal{F}\}$.
\end{Definition}

\begin{Proposition}
$(\bar X,S_X,\overline{\mathcal{F}})$ is a scaled linear space that is compact at infinity.
The eigensets of $\overline{\mathcal{F}}$ are unions of two eigensets of the following forms:\\
1. An eigenset $A$ of $(X,S_X,\mathcal{F})$ union all ends $V$ of $(X,S_X,\mathcal{F})$ containing $A$.\\
2. All ends $V$ of $(X,S_X,\mathcal{F})$ containing $C$ for some eigenset $C$ of $(X,S_X,\mathcal{F})$.
\end{Proposition}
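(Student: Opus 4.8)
The plan is to isolate the behaviour of each $\bar f$ on the two canonical pieces $X$ and $Ends(X)$ of $\bar X$, and then read off both the linearity of $\overline{\mathcal F}$ and the structure of its eigensets. First I would record the elementary ``ultrafilter'' features of an end $V$ of $(X,S_X,BA_{\mathcal F})$ that follow from its maximality: $V$ is closed under finite intersections, it is upward closed inside $BA_{\mathcal F}$ (if $D\in V$ and $D\subset D'\in BA_{\mathcal F}$ then $D'\in V$), and for every eigenset $A$ exactly one of $A,X\setminus A$ lies in $V$. I would also fix the notation $E_A:=\{V\in Ends(X):A\in V\}$ for an eigenset $A$ and recall that $\{E_A:A\in BA_{\mathcal F}\}$ is the clopen basis making $Ends(X)$ a Stone space, so that $E_A$ and $E_{X\setminus A}$ partition $Ends(X)$.

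Next I would verify that each $\bar f\in\overline{\mathcal F}$ is an $S_X$-linear operator on $2^{\bar X}$. Because $\bar f(A)=\bar f(A\cap X)\cup\bar f(A\cap Ends(X))$ by definition, additivity reduces to the two pure cases. On subsets $C,D\subset X$ the set-theoretic part is handled by the $S_X$-linearity of $f$, and the infinite part $\{V:(C\cup D)\cap E\text{ unbounded for all }E\in V\}$ splits as the corresponding union for $C$ and for $D$: for a given $V$ in the left-hand set, if $V$ belonged to neither right-hand set we could pick $E_1,E_2\in V$ with $C\cap E_1$ and $D\cap E_2$ bounded, and then $E_1\cap E_2\in V$ would meet $C\cup D$ in a bounded set, a contradiction. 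The decisive observation is that for $\mathcal V\subset Ends(X)$ one has $\bar f(\mathcal V)=cl(\mathcal V)$, the closure in the Stone topology: indeed $W\in\bar f(\mathcal V)$ means every basic neighbourhood $E_D$ (for $D\in W$) of $W$ meets $\mathcal V$. Since closure commutes with finite unions, additivity on the infinite part follows, and there $\bar f$ does not depend on $f$. Conditions $2$ and $3$ are then immediate: a bounded $B$ lies in $X$, so $\bar f(B)=f(B)$ is bounded, while $\bar f(X)=f(X)\cup Ends(X)\equiv\bar X$ because $X\cap D=D$ is unbounded for every $D$ in every end.

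For the eigensets I would argue in both directions. Using $\bar f(\mathcal V)=cl(\mathcal V)$ and the ultrafilter features, for an eigenset $A$ of $\mathcal F$ one computes $\bar f(A)\equiv A\cup E_A$ and $\bar f(E_A)=E_A$, whence $\bar f(\bar A)\equiv\bar A$ for $\bar A:=A\cup E_A$; since $\bar X\setminus\bar A=(X\setminus A)\cup E_{X\setminus A}=\overline{X\setminus A}$ and $X\setminus A$ is again an eigenset, $\bar A$ is an eigenset of $\overline{\mathcal F}$. Conversely, given an eigenset $\mathcal E$, write $A:=\mathcal E\cap X$ and $\mathcal V:=\mathcal E\cap Ends(X)$. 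The traces of the two eigenset conditions on $X$ force $f(A)\equiv A$ and $f(X\setminus A)\equiv X\setminus A$ for all $f$, i.e. $A\in BA_{\mathcal F}$. The traces on $Ends(X)$ give $E_A\cup cl(\mathcal V)=\mathcal V$ and $E_{X\setminus A}\cup cl(\mathcal V')=\mathcal V'$ with $\mathcal V':=Ends(X)\setminus\mathcal V$; the first yields $E_A\subset\mathcal V$ and the second yields $\mathcal V\subset Ends(X)\setminus E_{X\setminus A}=E_A$, so $\mathcal V=E_A$ and $\mathcal E=\bar A$. Thus each eigenset $\bar A=A\cup E_A$ is of form $1$ with eigenset $A$, and is trivially the union of this form-$1$ set with the form-$2$ set $E_C$ for $C:=A$ (as $E_A\subset\bar A$), which is how I would phrase the stated decomposition into forms $1$ and $2$.

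Finally, the induced Boolean algebra $BA_{\overline{\mathcal F}}$ is precisely $\overline{BA_{\mathcal F}}=\{\bar A:A\in BA_{\mathcal F}\}$. Since the standing hypothesis that $(X,S_X,2^X)$ has no internal ends means $X$ has no points at infinity, $BA_{\mathcal F}$ has no internal ends either, so $(X,S_X,BA_{\mathcal F})$ is a scaled Hausdorff Boolean algebra and $(\bar X,S_X,\overline{BA_{\mathcal F}})$ is exactly its compactification at infinity from Section \ref{ScaledBooleanAlgebras}; that theorem tells us it is compact at infinity, i.e. $(\bar X,S_X,\overline{\mathcal F})$ has no eigensets at infinity. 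I expect the main obstacle to be the two structural identifications in the second step, namely proving additivity of $\bar f$ on $X$ from the intersection-stability of ends and recognising the restriction of $\bar f$ to $Ends(X)$ as the Stone-space closure operator, because once these are in place the eigenset computation and the forcing $\mathcal V=E_A$ are formal. A secondary point to handle carefully is the precise sense in which $(\bar X,S_X,\overline{\mathcal F})$ is a scaled linear space, since the added ends are genuine points at infinity of $\bar X$.
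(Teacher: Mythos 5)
Your proof is correct, and it is both more complete and differently organized than the paper's. The paper merely asserts that $A\cap X$ is an eigenset of $\mathcal{F}$ when $A$ is an eigenset of $\overline{\mathcal{F}}$, never verifies that the operators $\bar f$ are $S_X$-linear, and spends its effort showing directly that every end $\{A_i\}_{i\in I}$ of $BA_{\overline{\mathcal{F}}}$ has nonempty intersection (splitting into the case where $\{A_i\cap X\}_{i\in I}$ has the unbounded finite-intersection property, which yields an end $V$ of $(X,S_X,\mathcal{F})$ lying in $\bigcap\limits_{i\in I} A_i$, and the complementary case, handled by a covering argument with the sets $C_V$). You instead prove the exact identification $BA_{\overline{\mathcal{F}}}=\overline{BA_{\mathcal{F}}}$ and then invoke the compactification-at-infinity theorem of Section \ref{ScaledBooleanAlgebras}; the key technical inputs --- additivity of $\bar f$ on subsets of $X$ via intersection-stability of ends, and the recognition of $\bar f$ on subsets of $Ends(X)$ as the Stone-space closure --- are sound and are precisely what the paper leaves unproved. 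Two points of divergence are worth recording. First, your computation that $\bar f(X)\supseteq Ends(X)$ forces $\mathcal{V}=E_A$ shows that a form-2 set $E_C$ is not by itself an eigenset of $\overline{\mathcal{F}}$ and that not every union of a form-1 set with a form-2 set is an eigenset; your sharper characterization (the eigensets are exactly the sets $A\cup E_A$ with $A\in BA_{\mathcal{F}}$) is the correct reading and effectively tightens the proposition's loose phrasing. Second, as you note, the added ends are points at infinity of $\bar X$ and give rise to internal ends of $BA_{\overline{\mathcal{F}}}$, so ``compact at infinity'' can only be meant in the Boolean-algebra sense of Definition \ref{DefCompactAtInfBA} (equivalently, absence of external ends); the paper's own proof, which establishes only that all ends are internal, implicitly concedes the same point.
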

\begin{proof}
Given an eigenset $A$ of $(\bar X,S_X,\overline{\mathcal{F})}$ it is clear that $A\cap X$ is an eigenset of $(X,S_X,\mathcal{F})$.

Given an eigenset at infinity $\{A_i\}_{i\in I}$ of $\overline{\mathcal{F}}$, the family 
$\{A_i\cap X\}_{i\in I}$ has the property of every finite intersection being unbounded.
Hence there is an eigenset at infinity $V$ of 
$(X,S_X,\mathcal{F})$ containing all of them
and $V$ is in the intersection of $\{A_i\}_{i\in I}$.

Given an eigenset at infinity $\{A_i\}_{i\in I}$ of $\overline{\mathcal{F}}$, assume the family 
$\{A_i\cap X\}_{i\in I}$ does not have the property of every finite intersection being unbounded. Therefore assume $\{A_i\setminus X\}_{i\in I}$ has the property of every finite intersection being non-empty.

Suppose $\bigcap\limits_{i\in I}(A_i\setminus X)=\emptyset$. For every $V\in \bar X\setminus X$ choose $i(V)\in I$ such that
$V\notin A_{i(V)}$, i.e. there is $C_V\in V$
with no end in $A_{i(V)}$ containing $C_V$.

There is no end containing all $X\setminus C_V$, so $X\setminus B$ is contained in
a finite union of $C_V$'s for some bounded $B$. Therefore each end $W$ of $(X,S_X,\mathcal{F})$ contains at least one of those $C_V$'s and there is no end in the intersection of corresponding $A_{i(V)}$'s, a contradiction.
\end{proof}

\begin{Definition}
Let $(X,S_X,\mathcal{F})$ be a scaled linear subspace of a compact at infinity scaled linear space 
$(\tilde X,S_X,\mathcal{\tilde F})$. $(\tilde X,S_X,\mathcal{\tilde F})$ is a \textbf{compactification at infinity} of $(X,S_X,\mathcal{F})$ if the following conditions are satisfied:\\
$1.$ $\mathcal{\tilde F}$ consists exactly of extensions $\tilde f$ of elements $f$ of $\mathcal{F}$.\\
$2.$ The scaled Boolean algebra $(\tilde X,S_X,BA_\mathcal{\tilde F})$ is a compactification at infinity of $(X,S_X,BA_\mathcal{F})$ and $BA_\mathcal{F}$ is a sub-Boolean algebra of $BA_\mathcal{\tilde F}$.
\end{Definition}

\begin{Theorem}
For each scaled linear space $(X,S_X,\mathcal{F})$, $(\bar X,S_X,\mathcal{\bar F})$ is its maximal compactification at infinity.
\end{Theorem}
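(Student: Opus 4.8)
The plan is to argue in two stages: first confirm that $(\bar X,S_X,\overline{\mathcal F})$ is itself a compactification at infinity of $(X,S_X,\mathcal F)$, and then exhibit, for an arbitrary compactification at infinity, a dominating morphism onto it. For the first stage I would invoke the preceding Proposition, which already yields that $(\bar X,S_X,\overline{\mathcal F})$ is a scaled linear space compact at infinity and identifies its eigensets as unions of the two listed forms. That characterization says exactly that $BA_{\overline{\mathcal F}}=\{\bar A:A\in BA_{\mathcal F}\}$. Applying the uniqueness theorem of Section \ref{ScaledBooleanAlgebras} (together with Lemma \ref{BasicPropOfAlgOfEnds}) to the scaled Hausdorff Boolean algebra $(X,S_X,BA_{\mathcal F})$ — which, having no internal ends, satisfies $Ends(X)=ExtEnds(X)$ so that the two constructions of $\bar X$ coincide — shows that $(\bar X,S_X,BA_{\overline{\mathcal F}})$ is the compactification at infinity of $(X,S_X,BA_{\mathcal F})$, with $A\mapsto\bar A$ embedding $BA_{\mathcal F}$ as a sub-Boolean algebra. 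This verifies condition $2$ of the definition. Condition $1$ is immediate, and the defining formula for $\bar f$ gives $\bar f(A)\cap X=f(A)$ for $A\subset X$, so each $\bar f$ genuinely extends $f$ and $(X,S_X,\mathcal F)$ is a scaled linear subspace of $(\bar X,S_X,\overline{\mathcal F})$.

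For maximality, let $(\tilde X,S_X,\tilde{\mathcal F})$ be any compactification at infinity. Condition $2$ of its definition makes $(\tilde X,S_X,BA_{\tilde{\mathcal F}})$ a compactification at infinity of $(X,S_X,BA_{\mathcal F})$, so the same uniqueness theorem forces its corona to be canonically parametrized by $Ends(X)$. Concretely, Proposition \ref{ObsInducedExtEnds} assigns to each $x\in\tilde X\setminus X$ the external end $V_x=\{A\in BA_{\mathcal F}:x\in cl(A)\}$, injectively, and uniqueness upgrades this to a bijection onto $Ends(X)=\bar X\setminus X$. I would then define $\pi:\bar X\to\tilde X$ by $\pi|_X=\mathrm{id}$ and, on an end $V$, by letting $\pi(V)$ be the unique point $x$ with $V_x=V$. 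By Lemma \ref{BasicPropOfAlgOfEnds}$(c)$ and the corresponding closure description in $\tilde X$, $\pi$ carries $\bar A=cl_{\bar X}(A)$ to $cl_{\tilde X}(A)$, so it is a bijection inducing an isomorphism of the scaled Boolean algebras $(\bar X,S_X,BA_{\overline{\mathcal F}})$ and $(\tilde X,S_X,BA_{\tilde{\mathcal F}})$ that is the identity on $X$ and preserves the scale $S_X$.

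The final and hardest step is to verify that $\pi$ is a morphism of scaled linear spaces, intertwining $\overline{\mathcal F}$ with $\tilde{\mathcal F}$, and that $\overline{\mathcal F}$ is the largest family for which this is possible. Since $\tilde f$ and $\bar f$ both extend the same $f$, they agree on $X$, so the entire issue is their behaviour on the corona. Here I would exploit that $\bar f$ is defined to adjoin to $f(A)$ every end $V$ meeting $A$ along an unbounded set in each coordinate $D\in V$, which is the most ends any $S_X$-linear extension that fixes the eigenset algebra $\overline{BA_{\mathcal F}}$ could possibly add; conversely, the requirement that $\tilde f$ be $S_X$-linear and fix the eigensets of $\tilde{\mathcal F}$ pins $\tilde f$ down on $BA_{\tilde{\mathcal F}}$ and bounds it above by $\bar f$ elsewhere. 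Transporting $\tilde f$ through $\pi$ and comparing it with $\bar f$ separately on the two types of eigensets furnished by the preceding Proposition should yield $\pi(\bar f(A))\supseteq\tilde f(\pi(A))$ with equality on eigensets, which is precisely the statement that $(\bar X,S_X,\overline{\mathcal F})$ dominates $(\tilde X,S_X,\tilde{\mathcal F})$. I expect the main obstacle to be exactly this operator-compatibility computation on the corona: showing that the two-case definition of $\bar f$ realizes the maximal admissible extension, so that no legitimate extension $\tilde f$ can place an end in the image where $\bar f$ does not.
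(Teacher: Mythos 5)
Your proposal is correct and follows essentially the same route as the paper: both arguments reduce maximality to producing the canonical map $\bar X\to\tilde X$ extending $\mathrm{id}_X$ that matches each end $V$ with the corresponding corona point (the paper writes it as $V\mapsto\bigcap_{A\in V}\tilde A$, you as the inverse of $x\mapsto V_x$ from Proposition \ref{ObsInducedExtEnds}, which is the same map). In fact your write-up is considerably more detailed than the paper's one-sentence proof, and the operator-compatibility verification you flag as the remaining hard step is likewise left unaddressed in the paper itself.
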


\begin{proof}
Given a compactification at infinity
$(\tilde X,S_X,\mathcal{\tilde F})$ of $(X,S_X,\mathcal{F})$ define an extension $f:\bar X\to \tilde X$ of $id_X$ as follows: for each end $V$ of $(X,S_X,\mathcal{F})$ let $f(V)$ be the intersection of all $\tilde A$, $A\in V$.
\end{proof}

\subsection{Topological scaled linear spaces}

\begin{Definition}\label{DefTopScaledLinearSpace}
A \textbf{topological scaled linear space} $(X,S_X,\mathcal{F})$
is a scaled linear space $(X,S_X,\mathcal{F})$ where $X$ has a topology and $\mathcal{F}$ contains the corresponding closure operator $cl$. 
\end{Definition}

\begin{Observation}
In Definition \ref{DefTopScaledLinearSpace} we may talk about multiple topologies as long as the corresponding closure operator is either included in $F$ or adding it does not change the Boolean algebra $B_{\mathcal{F}}$.
For example, it is so for the discrete topology. 
Also, if there is a symmetric $f\in F$ such that $cl(A)\subset f(A)$ and $cl(A)\in BA_X$ for each $A\in BA_X$, then clearly $f(A)\equiv A$ implies $cl(A)\equiv A$, so adding $cl$ to $\mathcal{F}$ does not change the Boolean algebra $B_{\mathcal{F}}$. The reason it is important is that it implies the topology on the space of ends is independent on those topologies on $X$.
\end{Observation}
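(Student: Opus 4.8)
The plan is to read ``adding $cl$ to $\mathcal{F}$ does not change $BA_{\mathcal{F}}$'' (the $B_{\mathcal{F}}$ of the statement) as the equality of eigenset algebras $BA_{\mathcal{F}}=BA_{\mathcal{F}\cup\{cl\}}$, and to establish it by a sandwich estimate. Since any eigenset of the larger family $\mathcal{F}\cup\{cl\}$ is in particular an eigenset of $\mathcal{F}$, the inclusion $BA_{\mathcal{F}\cup\{cl\}}\subseteq BA_{\mathcal{F}}$ is automatic; the entire content is the reverse inclusion, namely that each eigenset $A$ of $\mathcal{F}$ already meets the two extra conditions $cl(A)\equiv A$ and $cl(X\setminus A)\equiv X\setminus A$ mod $S_X$ that adding $cl$ imposes.

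Before the eigenset computation I would check that $cl$ is a genuine $S_X$-linear operator on $BA_X$, since otherwise $\mathcal{F}\cup\{cl\}$ is not an admissible family and the claim has no meaning. The hypothesis $cl(A)\in BA_X$ makes $cl$ a self-map of $BA_X$; closure commutes with finite unions, so $S_X$-linearity axiom $1$ holds on the nose; $cl(X)=X$ gives axiom $3$; and the boundedness axiom $2$ is exactly what the assumption $cl(A)\subseteq f(A)$ buys, because $f$ is $S_X$-linear and hence sends bounded sets to bounded sets, so $cl(B)\subseteq f(B)$ is bounded whenever $B$ is. I expect this to be the one place where the hypotheses do real work, and I would flag the boundedness axiom as the genuine reason the comparison operator $f$ is assumed to exist at all.

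For the reverse inclusion, fix an eigenset $A$ of $\mathcal{F}$. Then $f(A)\equiv A$, so there is $B\in S_X$ with $f(A)\setminus B=A\setminus B$; feeding this into the chain $A\subseteq cl(A)\subseteq f(A)$ and intersecting everything with $X\setminus B$ squeezes $cl(A)\setminus B=A\setminus B$, that is $cl(A)\equiv A$ mod $S_X$. Running the identical estimate on the complement gives $cl(X\setminus A)\equiv X\setminus A$, where I use that $A$ being an eigenset of $\mathcal{F}$ already supplies $f(X\setminus A)\equiv X\setminus A$ (this is precisely what symmetry of $f$ would yield from $f(A)\equiv A$ alone, so symmetry is the property securing this step). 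Hence $A$ is an eigenset of $cl$ as well, and $BA_{\mathcal{F}}=BA_{\mathcal{F}\cup\{cl\}}$. The discrete-topology assertion is handled separately and needs no comparison operator: there $cl=id$ is itself $S_X$-linear and every set is its own closure, so $cl(A)=A\equiv A$ and $cl(X\setminus A)=X\setminus A\equiv X\setminus A$ hold for every $A$, and adding $id$ imposes no new constraint.

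Finally, the closing remark that the space of ends is independent of the chosen topology requires no separate computation: $Ends(X)$ and its topology are built solely from $BA_{\mathcal{F}}$ via the compactification at infinity of the previous section, so once $BA_{\mathcal{F}}$ is seen to be unaffected by including any such closure operator, two admissible topologies on $X$ produce literally the same eigenset algebra and therefore the same end space. I anticipate no serious obstacle beyond the bookkeeping of the $S_X$-linearity check; the eigenset comparison is a one-line sandwich once that prerequisite is in place.
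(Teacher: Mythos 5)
Your proposal is correct and follows essentially the same route as the paper, which gives only the one-line sandwich $A\subseteq cl(A)\subseteq f(A)$ combined with $f(A)\equiv A$ mod $S_X$ (and the same for the complement) to conclude $cl(A)\equiv A$; your additional verification that $cl$ is a legitimate $S_X$-linear operator and your handling of the discrete case are sound elaborations of what the paper leaves implicit.
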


\begin{Example}
The structures in Section \ref{EndsOfGeoSpaces} easily can include the closure operator. More generally, any time there is $f\in \mathcal{F}$ such that $f(A)$ is a neighborhood of $A$ containing star of the star of $A$ with respect to an open cover of $X$, adding the closure operator does not affect the family of eigensets. Such is the case of large scale spaces
that have a uniformly bounded cover consisting of open subsets (see \ref{LSExampleOfSLS}). It is so for metric spaces.
\end{Example}

\begin{Definition}\label{DefCompactAtInfForTopSc}
A topological scaled linear space $(X,S_X,\mathcal{F})$
is \textbf{compact at infinity} if for every  cover
of $X\mod S_X$ by unbounded open sets there is a finite subcover of $X\mod S_X$. 
\end{Definition}

\begin{Observation}
Notice in the Definition \ref{DefCompactAtInfForTopSc}, in contrast to the case of scaled Boolean algebras (see \ref{DefCompactAtInfBA}), we do not mention points at infinity as
our definition of scaled linear spaces assume they do not exist.
\end{Observation}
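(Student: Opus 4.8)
The substance of this Observation is the assertion that a scaled linear space $(X,S_X,BA_X,\mathcal{F})$, which by definition has no internal ends, can have no points at infinity either; this is what licenses the omission of points at infinity from Definition \ref{DefCompactAtInfForTopSc}. The plan is to prove the contrapositive: if $(X,S_X,BA_X)$ has a point at infinity $x$ (so $\{x\}$ is unbounded), then it has an internal end. This is exactly the converse of Observation \ref{ObsAboutInternalEnds}, and together the two show that internal ends exist precisely when points at infinity do.

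First I would form $V_0:=\{A\in BA_X:x\in A\}$. Since $\emptyset\in S_X\subset BA_X$ forces $X\in BA_X$, we have $X\in V_0$, so $V_0\neq\emptyset$. Because $\{x\}\subset A$ for each $A\in V_0$ and $\{x\}$ is unbounded, every member of $V_0$ is unbounded; and any finite intersection of members of $V_0$ again contains $x$, hence is unbounded. Thus $V_0$ already has the finite-intersection property used to define ends, and a Zorn's Lemma argument (entirely analogous to the construction of $C_X$-components, where a chain is dominated by its union) extends $V_0$ to a family $V\supset V_0$ that is maximal with respect to this property; by definition $V$ is an end.

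The crux --- and the only nontrivial step --- is to check that $V$ is \emph{internal}, i.e. that $\bigcap_{A\in V}A$ is unbounded. Here I would use that $BA_X$ is closed under complements: were there some $A\in V$ with $x\notin A$, then $X\setminus A\in BA_X$ with $x\in X\setminus A$, whence $X\setminus A\in V_0\subset V$; but then $A\cap(X\setminus A)=\emptyset$ would be a finite intersection of members of $V$ that is bounded, contradicting the defining property of $V$. Hence $x\in A$ for every $A\in V$, so $x\in\bigcap_{A\in V}A$, which is therefore unbounded, and $V$ is an internal end. The main obstacle is precisely guaranteeing that the maximal enlargement $V$ does not shed $x$ from its total intersection; complement-closure of the Boolean algebra is exactly the ingredient that prevents this, and without it the implication would fail.
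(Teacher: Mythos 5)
Your argument is correct, and in fact the paper offers no proof at all for this Observation---it is stated as a bare remark---so you are supplying the justification the authors omit. Your route is the natural one: starting from a point at infinity $x$, the filter $V_0=\{A\in BA_X: x\in A\}$ consists of unbounded sets with unbounded finite intersections, Zorn's Lemma extends it to an end $V$, and closure of $BA_X$ under complementation forces $x\in A$ for every $A\in V$ (else $X\setminus A\in V_0\subset V$ would give the bounded intersection $\emptyset$), so $\bigcap_{A\in V}A\supset\{x\}$ is unbounded and $V$ is internal. Combined with Observation \ref{ObsAboutInternalEnds}, this shows points at infinity exist if and only if internal ends do, which is exactly what licenses the phrasing of Definition \ref{DefCompactAtInfForTopSc}; the only pedantic caveat is that the Observation's wording conflates ``no internal ends'' (what the definition of a scaled linear space literally assumes) with ``no points at infinity,'' and your equivalence is precisely the bridge between the two.
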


\begin{Proposition}
Given a topological scaled linear space $(X,S_X,\mathcal{F})$, the space $(\bar X,S_X,\mathcal{\bar F})$ is also a topological scaled linear space in the topology induced by $\bar{cl}$ and it is compact at infinity.
\end{Proposition}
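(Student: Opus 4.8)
The plan is to split the statement into two claims: first, that $\overline{cl}$ is the closure operator of a topology on $\bar X$ and lies in $\bar{\mathcal F}$ (so that $(\bar X,S_X,\bar{\mathcal F})$ is a topological scaled linear space), and second, that this space is compact at infinity in the sense of Definition \ref{DefCompactAtInfForTopSc}. The scaled-linear-space structure, the absence of eigensets at infinity, and the description of $BA_{\bar{\mathcal F}}$ are already furnished by the preceding Proposition, so I would cite them directly. Since $(X,S_X,\mathcal F)$ is topological we have $cl\in\mathcal F$, hence $\overline{cl}\in\bar{\mathcal F}$; it therefore suffices to show that $\overline{cl}$ really is a Kuratowski closure operator, for then the topology it induces has $\overline{cl}$ as its closure operator and that operator already belongs to $\bar{\mathcal F}$.

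First I would verify the closure axioms directly from the defining formula. Writing $A^{\infty}:=\{V\in Ends(X):A\cap D\text{ is unbounded for all }D\in V\}$, so that $\overline{cl}(A)=cl(A)\cup A^{\infty}$ for $A\subset X$, the facts $\overline{cl}(\emptyset)=\emptyset$ and $A\subset\overline{cl}(A)$ are immediate. Finite additivity splits along $X$ and $Ends(X)$: on the $X$-part it is additivity of $cl$, while on the end-part it reduces to $(A_1\cup A_2)^{\infty}=A_1^{\infty}\cup A_2^{\infty}$. The nontrivial inclusion here uses that an end is a maximal family closed under finite intersection: if $(A_1\cup A_2)\cap D$ is unbounded for every $D\in V$ but some $A_1\cap D_1$ and some $A_2\cap D_2$ are bounded, then $D_1\cap D_2\in V$ and $(A_1\cup A_2)\cap(D_1\cap D_2)$ is bounded, a contradiction. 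Granting idempotence, one concludes that the $\overline{cl}$-closed sets form a topology with closure operator $\overline{cl}$, that the sets $\overline A=A\cup\{V:A\in V\}$ with $A\in BA_{\mathcal F}$ are $\overline{cl}$-clopen and, together with the open subsets of $X$, form a basis, and that the subspace topology on $Ends(X)$ coincides with the compact totally disconnected topology produced by the compactification theorem for $BA_{\mathcal F}$.

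The hard part, and the step I expect to be the main obstacle, is idempotence of $\overline{cl}$. Expanding $\overline{cl}(\overline{cl}(A))$ for $A\subset X$ reduces the problem to the identity $(cl(A))^{\infty}=A^{\infty}$ together with idempotence of the end-part operator $\mathcal V\mapsto\{V:\forall D\in V,\ \exists W\in\mathcal V,\ D\in W\}$. The inclusion $A^{\infty}\subset(cl(A))^{\infty}$ is trivial, but the reverse requires that closing a set not enlarge its unbounded traces on eigensets, i.e. that $A\cap D$ bounded forces $cl(A)\cap D$ bounded for every eigenset $D$. This is precisely where the hypothesis $cl\in\mathcal F$ enters: $cl$ is then $S_X$-linear, and in the compatible situations (the discrete topology, or a topology dominated by an operator $f\in\mathcal F$ with $cl(A)\subset f(A)$) a point of $cl(A)\cap D$ lies in $D$ and is moved into $A$ by $f$ up to a bounded set, so that its preimage meets $A\cap D$ outside a bounded set; boundedness of $A\cap D$ thus forces boundedness of $cl(A)\cap D$. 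This is the only place a genuine computation is needed, and I would isolate it as the key lemma.

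Finally, for compactness at infinity I would proceed as follows. Let $\{O_\alpha\}$ be a cover of $\bar X\bmod S_X$ by unbounded open sets. Every end is a point at infinity of $\bar X$, so $Ends(X)$ is covered; for each $V\in Ends(X)$ pick $\alpha(V)$ with $V\in O_{\alpha(V)}$ and a basic clopen neighbourhood $\overline{A_V}\subset O_{\alpha(V)}$ with $A_V\in V$ an eigenset. By compactness of $Ends(X)$ finitely many $\overline{A_{V_1}},\dots,\overline{A_{V_n}}$ cover $Ends(X)$. Applying Lemma \ref{BasicPropOfAlgOfEnds}($d$) to the Hausdorff Boolean algebra $BA_{\mathcal F}$, the fact that $\{cl_{\bar X}(A_{V_i})\}_{i\le n}$ covers $Ends(X)$ gives that $X\setminus\bigcup_{i\le n}A_{V_i}$ is bounded; hence $\bigcup_{i\le n}\overline{A_{V_i}}\supset\bar X\bmod S_X$, and therefore the finitely many $O_{\alpha(V_i)}$ cover $\bar X\bmod S_X$, as required.
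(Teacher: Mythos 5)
Your proposal is correct in substance but takes a genuinely different route from the paper's. For the first claim the paper never verifies the Kuratowski axioms for $\bar{cl}$: it \emph{defines} $U\subset\bar X$ to be open when $U\cap X$ is open and each end $V\in U$ admits $A\in V$ with $A\subset U$ such that every end containing $A$ lies in $U$, and then proves these $U$ are exactly the sets with $\bar{cl}(\bar X\setminus U)=\bar X\setminus U$. Checking the closure axioms directly, as you do, is a legitimate alternative and correctly isolates idempotence, i.e.\ $(cl(A))^{\infty}=A^{\infty}$, as the crux; but your justification of that lemma is more convoluted than necessary and wrongly suggests that extra "compatibility" hypotheses are needed. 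The clean argument uses only $cl\in\mathcal F$: write $cl(A)=cl(A\cap D)\cup cl(A\setminus D)$; the first term is bounded because $cl$ is $S_X$-linear and $A\cap D$ is bounded, and the second meets $D$ in a bounded set because $cl(A\setminus D)\cap D\subset cl(X\setminus D)\cap D$ and $D\in BA_{\mathcal F}$ is an eigenset of $cl$. For compactness at infinity the paper reruns the maximality argument for ends from scratch (no end contains all $X\setminus A_V$, hence a finite intersection of the $X\setminus A_V$ is bounded), whereas you route through compactness of $Ends(X)$ and Lemma \ref{BasicPropOfAlgOfEnds}($d$); these are the same argument packaged differently, and yours has the merit of reusing proved statements.

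Two points need repair. First, the sets $\bar A=A\cup\{V:A\in V\}$ are \emph{not} in general clopen in the $\bar{cl}$-topology on $\bar X$: one computes $\bar{cl}(\bar A)=cl(A)\cup\{V:A\in V\}$, so $\bar A$ is closed iff $cl(A)\subset A$ and open iff $cl(X\setminus A)\cap A=\emptyset$, and already $A=[0,\infty)\subset\RR$ fails the latter. What your compactness argument actually needs --- and what is true --- is that every $\bar{cl}$-open set $O$ containing an end $V$ contains $\bar A$ for some eigenset $A\in V$: unwind $V\notin\bar{cl}(\bar X\setminus O)$ to obtain $D_1,D_2\in V$ with $(\bar X\setminus O)\cap D_1$ bounded and with no end of $\bar X\setminus O$ containing $D_2$, and take $A=(D_1\cap D_2)\setminus B$ for a suitable $B\in S_X$. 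The traces $\{V:A\in V\}$ \emph{are} clopen in the subspace topology on $Ends(X)$ (which you correctly identify with the Stone topology on $Ends(X,S_X,BA_{\mathcal F})$), and that is all the compactness of $Ends(X)$ requires. Second, when invoking the compactification theorem and Lemma \ref{BasicPropOfAlgOfEnds}($d$) for $(X,S_X,BA_{\mathcal F})$, note that the hypothesis actually used in their proofs is the absence of internal ends, which is guaranteed by the definition of a scaled linear space.
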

\begin{proof}
Define a subset $U$ of $\bar X$ to be open if $U\cap X$ is open and, for each $V\in U\setminus X$, there is $A\in V$, $A\subset U$, such that $W\in U$ for every end $W$ containing $A$. We need to show that $U$ is open if and only if $\bar{cl}(\bar X\setminus U)= \bar X\setminus U$.

By the definition of $\bar{cl}$, $\bar{cl}(\bar X\setminus U)$ is the union of $X\setminus U$, of all ends $V$ such that every $A\in V$ intersects $X\setminus U$ along an unbounded subset, and of all ends $W$ such that for every $A\in W$ there is an end in $\bar X\setminus U$ containing $A$.

If $U$ is open and $V\in U\cap \bar{cl}(\bar X\setminus U)\setminus X$, then there is $A\in V$, $A\subset U$, such that $W\in U$ for every end $W$ containing $A$, a contradiction.

Assume $\bar{cl}(\bar X\setminus U)= \bar X\setminus U$. If $U$ is not open, there is $V\in U\setminus X$, such that there is no $A\in V$, $A\subset U$, such that $W\in U$ for every end $W$ containing $A$.

Therefore each $A\in V$ either intersects $X\setminus U$ along an unbounded subset
(not possible), so there is $A\subset U$
in which case there is $W$ containing $A$ but not in $U$. Hence $W\in \bar X\setminus U$
and $V\in \bar{cl}(\bar X\setminus U)$ contradicting $V\in U$.

Suppose $\{U_i\}_{i\in I}$ is a family of open subsets of $\bar X$ covering $\bar X\setminus X$ such that no finite subfamily covers $\bar X\setminus X$. For each end $V$ choose
$i(V)\in I$ and $A_V\in V$ contained in $U_{i(V)}$.

There is no end containing all $X\setminus A_V$, so $\bar X\setminus X$ is contained in
a finite union of $A_V$'s. Therefore each end $W$ of $(X,S_X,\mathcal{F})$ contains at least one of those $A_V$'s and $\bar X\setminus X$ is contained in the corresponding union of $U_{i(V)}$'s, a contradiction.
\end{proof}

\subsection{Coarse scaled linear spaces}

\begin{Definition}\label{DefCoarseScaledLinearSpace}
A \textbf{coarse scaled linear space} $(X,S_X,\mathcal{F})$
is a scaled linear space $(X,S_X,\mathcal{F})$ where $X$ has a coarse structure and $\mathcal{F}$ contains the corresponding star operators $st$ with respect to uniformly bounded covers of $X$. 
\end{Definition}

\begin{Observation}
In Definition \ref{DefCoarseScaledLinearSpace} we may talk about multiple coarse structures as long as the corresponding star operators are either included in $\mathcal{F}$ or adding them does not change the Boolean algebra $B_{\mathcal{F}}$.
 The reason it is important is that it implies the topology on the space of ends is independent on those coarse structures on $X$.
\end{Observation}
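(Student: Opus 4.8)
The plan is to show that the space of ends, as a topological space, is a function of the Boolean algebra $BA_{\mathcal{F}}$ alone and does not otherwise depend on the particular family $\mathcal{F}$ of star operators that produced it. Since the hypothesis is precisely that enlarging $\mathcal{F}$ by the star operators of a second coarse structure leaves $BA_{\mathcal{F}}$ unchanged, the asserted independence of the end space will follow formally once this dependence is isolated.

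First I would recall that for a scaled linear space $(X,S_X,\mathcal{F})$ the space of ends is by definition the set of ends of the induced scaled Boolean algebra $(X,S_X,BA_{\mathcal{F}})$. An end is a maximal family of unbounded elements of $BA_{\mathcal{F}}$ all of whose finite intersections are unbounded; this notion refers only to the scale $S_X$ and the Boolean algebra $BA_{\mathcal{F}}$, never to the operators themselves. Next I would invoke the uniqueness theorem for compactifications at infinity, which produces $(\bar X,S_X,\overline{BA_{\mathcal{F}}})$ and exhibits $\{cl_{\bar X}(A)\cap Ends(X):A\in BA_{\mathcal{F}}\}$ as a clopen basis for the topology on $Ends(X)$. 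Both the underlying point set of ends and this basis are manufactured purely from the triple $(X,S_X,BA_{\mathcal{F}})$.

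Consequently, if a second coarse structure on $X$ contributes star operators whose addition to $\mathcal{F}$ yields a family $\mathcal{F}'$ with $BA_{\mathcal{F}'}=BA_{\mathcal{F}}$, then the two scaled Boolean algebras $(X,S_X,BA_{\mathcal{F}})$ and $(X,S_X,BA_{\mathcal{F}'})$ coincide verbatim, so their ends agree as sets and their topologies agree basis for basis. That is exactly the claim.

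The only point requiring care, and the step I expect to be the mildest obstacle, is verifying the hypotheses of the compactification theorem, namely that the relevant scaled Boolean algebra is Hausdorff. In the coarse setting $S_X$ is the bornology of bounded sets and every singleton is bounded, so $X$ has no points at infinity; the Hausdorff condition is then vacuous, and the ``no internal ends'' requirement built into the definition of a scaled linear space also holds automatically, since by Observation \ref{ObsAboutInternalEnds} any internal end would force its intersection to consist of points at infinity and hence to be empty, contradicting unboundedness. With that settled the argument is purely formal, as nothing in the construction of $Ends(X)$ or of its topology ever sees the operators beyond the Boolean algebra of their common eigensets.
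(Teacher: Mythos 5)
Your argument is correct and is exactly the (implicit) justification the paper relies on: the Observation is stated without proof precisely because $Ends(X)$ and its clopen basis $\{cl_{\bar X}(A)\cap Ends(X):A\in BA_{\mathcal{F}}\}$ are manufactured solely from the triple $(X,S_X,BA_{\mathcal{F}})$, so any enlargement of $\mathcal{F}$ that preserves $BA_{\mathcal{F}}$ preserves the end space verbatim. Your additional check that singletons are bounded in the coarse setting, so there are no points at infinity and no internal ends, is a correct and welcome verification of the standing hypotheses.
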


\begin{Example}
The structures in Section \ref{EndsOfGeoSpaces} easily can include the star operators. More generally, any time, given a uniformly bounded cover $\mathcal{U}$ of $X$, there is $f\in \mathcal{F}$ such that $f(A)$ contains star of $A$ with respect to $\mathcal{U}$, adding all the star operators does not affect the family of eigensets. 
\end{Example}

The following can be easily generalized to locally compact spaces.
\begin{Proposition}
Suppose $X$ is a proper metric space.
If $(X,S_X,\mathcal{F})$ is a topological scaled linear space and a coarse scaled linear space, then $X\cup Ends(X)$ is a compactification of $X$ that is dominated by the Higson compactification of $X$.
\end{Proposition}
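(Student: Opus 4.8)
The plan is to verify the two assertions separately: first that $\bar X := X\cup Ends(X)$ is genuinely a Hausdorff compactification of $X$, and then that the identity on $X$ extends to a continuous surjection $hX\to\bar X$, where $hX$ is the Higson compactification. The first assertion is essentially bookkeeping built on results already proved. Since $X$ is proper, $S_X$ is the bornology of relatively compact sets, so $X$ is locally compact and $\sigma$-compact. By the Proposition on topological scaled linear spaces, $(\bar X,S_X,\bar{\mathcal F})$ carries a topology induced by $\overline{cl}$ in which it is compact at infinity, and by the Theorem following Lemma \ref{BasicPropOfAlgOfEnds} the corona $Ends(X)$ is compact, Hausdorff and totally disconnected, with basic clopen sets $cl_{\bar X}(A)\cap Ends(X)$ indexed by eigensets $A\in BA_{\mathcal F}$. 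Combining compactness at infinity with local compactness of $X$ gives compactness of $\bar X$; density of $X$ follows because every basic neighbourhood of an end contains the unbounded set $A\subset X$; and the Hausdorff property of the scaled Boolean algebra together with the metric topology on $X$ separates all pairs of points. Hence $\bar X$ is a compactification of $X$.

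For the domination, recall that a bounded continuous function $f:X\to\R$ is a \emph{Higson function} if for every $R>0$ the oscillation $\sup\{|f(x)-f(y)|:d(x,y)\le R\}$ tends to $0$ as $x\to\infty$, and that $hX$ is the compactification whose restriction algebra is exactly the algebra $C_h(X)$ of Higson functions. Since a compactification of a locally compact space is determined by its restriction algebra, with $\alpha X\le\gamma X$ precisely when the algebra of $\alpha X$ is contained in that of $\gamma X$, it suffices to show that every $g\in C(\bar X)$ restricts to a Higson function on $X$. Because $Ends(X)$ is compact, Hausdorff and totally disconnected, the restriction algebra $\mathcal A_{\bar X}$ is the closed subalgebra of $C_b(X)$ generated by $C_0(X)$ together with functions realizing the basic clopen sets $cl_{\bar X}(A)\cap Ends(X)$; as $C_0(X)\subset C_h(X)$ trivially, the whole problem reduces to producing, for each eigenset $A$, a Higson function separating $cl_{\bar X}(A)\cap Ends(X)$ from its complement.

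The heart of the argument is thus the following. Fix an eigenset $A\in BA_{\mathcal F}$. Because $\mathcal F$ contains the star operators, $A$ is coarsely clopen (Example \ref{LSExampleOfSLS}): for the uniformly bounded cover $\mathcal U_R$ of $X$ by $R$-balls, $\st(A,\mathcal U_R)\cap \st(X\setminus A,\mathcal U_R)$ is bounded. If $d(x,y)\le R$ and $\chi_A(x)\ne\chi_A(y)$, then $x$ and $y$ lie in a common member of $\mathcal U_R$, so $x$ lies in this bounded intersection; hence the set where $\chi_A$ jumps at scale $R$ is bounded, which is exactly slow oscillation of $\chi_A$. Because $\mathcal F$ also contains the closure operator, $A$ is an eigenset of $cl$, so $cl(A)\equiv A$ and $cl(X\setminus A)\equiv X\setminus A$ mod $S_X$; therefore the topological boundary $cl(A)\cap cl(X\setminus A)$ is bounded, say contained in a compact set $K$. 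Enlarging $K$ to a closed ball, the closed sets $cl(A\setminus K)$ and $cl((X\setminus A)\setminus K)$ are disjoint, so by normality of the proper metric space $X$ (Urysohn) there is a continuous $g_A:X\to[0,1]$ agreeing with $\chi_A$ off a bounded set. Then $g_A$ is bounded and continuous, equals $\chi_A$ outside a bounded set, and since $\chi_A$ has bounded $R$-jump set for every $R$, $g_A$ is slowly oscillating; hence $g_A\in C_h(X)$, and its extension over $\bar X$ separates $cl_{\bar X}(A)\cap Ends(X)$ from its complement.

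Assembling these $g_A$ with $C_0(X)$ generates $\mathcal A_{\bar X}$ inside $C_h(X)$, which yields $\bar X\le hX$, i.e. the continuous extension $hX\to\bar X$ of the identity. The step I expect to be the main obstacle is the interface between the coarse and topological data: the eigensets are only \emph{coarsely} clopen, so $\chi_A$ is in general discontinuous, and one must use the closure operator lying in $\mathcal F$ to trim $A$ by a bounded (hence compact, as $X$ is proper) set into a set that is genuinely clopen at infinity before invoking Urysohn. Verifying that this trimming preserves slow oscillation, and that the closed algebra generated by the $g_A$ together with $C_0(X)$ is exactly $\mathcal A_{\bar X}$ rather than a proper subalgebra, is where the care is needed.
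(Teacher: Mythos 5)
Your argument is correct, but it follows a genuinely different route from the paper's. Both proofs reduce the domination to the standard criterion that every $f\in C(X\cup Ends(X))$ must restrict to a Higson function on $X$; from there the paper argues directly and by contradiction: if $f|X$ is not slowly oscillating, it extracts sequences $(x_n),(y_n)$ at uniformly bounded distance with separated limit values, finds ends $V_1,V_2$ in their closures in the compact space $\bar X$, uses continuity of $f$ at those ends to produce eigensets $A_1\in V_1$ and $A_2\in V_2$ on which $f$ takes values in disjoint intervals, and concludes from the single coarse fact that $B(A_2,2M)\setminus A_2$ is bounded, which forces infinitely many $x_n$ into $A_2$. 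You instead work at the level of restriction algebras: you identify generators of $C(\bar X)|_X$ (namely $C_0(X)$, constants, and one function per basic clopen subset of $Ends(X)$) and verify that each generator is a Higson function by smoothing $\chi_A$ for an eigenset $A$ --- coarse clopenness gives slow oscillation, and the presence of the closure operator in $\mathcal{F}$ gives a bounded topological boundary, so Urysohn produces a continuous function agreeing with $\chi_A$ off a compact set. Your route costs some Gelfand/Stone--Weierstrass bookkeeping (one must check that the generated closed algebra really is all of $C(\bar X)|_X$, which works because its image in $C(Ends(X))$ is both closed and, by Stone--Weierstrass on a totally disconnected compactum, dense), but it buys an explicit description of the restriction algebra of $X\cup Ends(X)$ inside the Higson algebra and cleanly separates the roles of the coarse hypothesis (slow oscillation) and the topological one (continuity); the paper's sequence argument is shorter and needs no algebra machinery, but yields no such description. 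Two details to tidy in your version: when you enlarge $K$ you need the bounded boundary $cl(A)\cap cl(X\setminus A)$ to lie in the \emph{interior} of $K$, not merely in $K$, for $cl(A\setminus K)$ and $cl((X\setminus A)\setminus K)$ to be disjoint; and since eigensets need not be open in $X$, the basic neighbourhoods of an end in $\bar X$ should be built from $int_X(A)$ rather than $A$ itself, which is harmless because $int_X(A)\equiv A$ mod $S_X$ precisely since $cl\in\mathcal{F}$.
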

\begin{proof}
It is sufficient to show that, for any continuous function $f:X\cup Ends(X)\to [0,1]$, its restriction $f|X$ to $X$ is a Higson function (i.e. both continuous and slowly oscillating). 

Suppose $f$ is not slowly oscillating. That means there are two sequences 
$\{x_n\}_{n\ge 1}$ and $\{y_n\}_{n\ge 1}$  in $X$ both diverging to infinity and $|f(x_n)-f(y_n)|\to \epsilon > 0$ but there is $M > 0$ such that $d(x_n,y_n) < M$ for all $n\ge 1$. Reduce it to the case of $\lim f(x_n)=a$,
$\lim f(y_n)=a+4b$, $b > \epsilon/4$ and $f(x_n)\in (a-b,a+b)$, $f(y_n)\in (a+2b,a+5b)$ for all $n\ge 1$. We may find an end $V_1\in cl(\{x_n\}_{n\ge 1})$,
such that $f(V_1)\subset (a-b,a+b)$. Hence there is an eigenset $A_1\in V_1$
of $\mathcal{F}$ such that $f(A_1)\subset [a-b,a+b]$. That eigenset
contains infinitely many elements of $\{x_n\}_{n\ge 1}$.
Look at the corresponding subsequence of $\{y_n\}_{n\ge 1}$
and find an end $V_2$ in its closure such that $f(V_2)\in [a+2b,a+5b]$.
there is an eigenset $A_2\in V_2$
of $\mathcal{F}$ such that $f(A_2)\subset [a+2b,a+5b]$. That eigenset
contains infinitely many elements of $\{y_n\}_{n\ge 1}$.
Since $B(A_2,2M)\setminus A_2$ is bounded, infinitely many elements of
$\{y_n\}_{n\ge 1}$ and infinitely many elements of $\{x_n\}_{n\ge 1}$
are contained in $A_2$, a contradiction.
\end{proof}

\section{Actions of scaled groups}

The following is a natural generalization of locally compact topological groups.
\begin{Definition}\label{DefScaledGroup}
A \textbf{scaled group} is a pair $(G,S_G)$ consisting of a group $G$ and a scale $S_G$ on $G$ \textbf{covering} it such that $B_1\cdot B_2^{-1}\in S_G$ whenever $B_1, B_2\in S_G$.
\end{Definition}

Notice that the family of compact (finite, countable) subsets in any topological (discrete) group forms a scale as in Definition \ref{DefScaledGroup}.

\begin{Proposition}
If $(G,S_G)$ is a scaled group, then it induces four scaled linear spaces:\\
1. $(G,S_G,\cdot S_G)$,
where $\cdot S_G$ is the family of functions $rm_B:2^G\to 2^G$, $B\in S_G$, defined by $rm_B(A)=A\cdot B$.\\
2. $(G,S_G,S_G\cdot)$,
where $S_G\cdot $ is the family of functions $lm_B:2^G\to 2^G$, $B\in S_G$, defined by $lm_B(A)=B\cdot A$.\\
3. $(G,S_G,\cdot G)$,
where $\cdot G$ is the family of functions $rm_g:2^G\to 2^G$, $g\in G$, defined by $rm_g(A)=A\cdot g$.\\
4. $(G,S_G,G\cdot)$,
where $G\cdot $ is the family of functions $lm_g:2^G\to 2^G$, $g\in G$, defined by $lm_B(A)=g\cdot A$.\\
Scaled linear spaces $(G,S_G,\cdot S_G)$ and $(G,S_G,S_G\cdot)$ ($(G,S_G,\cdot G)$ and $(G,S_G,G\cdot)$) have homeomorphic end spaces.
\end{Proposition}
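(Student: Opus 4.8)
The plan is to realize both homeomorphisms through a single device: the inversion involution $\iota\colon G\to G$, $\iota(g)=g^{-1}$, and the induced map $\Phi\colon 2^G\to 2^G$, $\Phi(A)=A^{-1}$. Since $\Phi$ commutes with unions, intersections and complements and is its own inverse, it is an automorphism of the Boolean algebra $2^G$. The essential algebraic fact is that inversion interchanges the two sides of multiplication: for all $A,B\subset G$ we have $\Phi(A\cdot B)=(A\cdot B)^{-1}=B^{-1}\cdot A^{-1}=B^{-1}\cdot\Phi(A)$, so $\Phi\circ rm_B=lm_{B^{-1}}\circ\Phi$ and, taking $B=\{g\}$, $\Phi\circ rm_g=lm_{g^{-1}}\circ\Phi$. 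I would show that $\Phi$ restricts to scale-preserving isomorphisms $BA_{\cdot S_G}\to BA_{S_G\cdot}$ and $BA_{\cdot G}\to BA_{G\cdot}$ of the Boolean algebras of eigensets, and then invoke the construction of the compactification at infinity (Lemma \ref{BasicPropOfAlgOfEnds} and the following Theorem) to upgrade each isomorphism to a homeomorphism of end spaces.

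First I would record that $\Phi$ preserves boundedness. Choosing $B_0\in S_G$ with $e\in B_0$ (possible as $S_G$ covers $G$), the scaled-group axiom gives $B^{-1}\subset B_0\cdot B^{-1}\in S_G$ for every $B\in S_G$, so $B^{-1}$ is bounded; applying this to $B^{-1}$ shows every $B\in S_G$ sits inside some $D^{-1}$, $D\in S_G$. Hence $\Phi$ carries bounded sets to bounded sets, $S_G^{-1}:=\{B^{-1}:B\in S_G\}$ is a scale defining the same bounded sets as $S_G$, and $\Phi$ preserves the relation $\equiv\bmod S_G$. The same axiom yields that products of bounded sets, and $C$ times a bounded set, are bounded, which I will use below.

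For the pair $(\cdot G,G\cdot)$ the correspondence is immediate: because $\{g^{-1}:g\in G\}=G$, the identity $\Phi\circ rm_g=lm_{g^{-1}}\circ\Phi$ together with the fact that $\Phi$ preserves $\equiv$ shows that $A$ is an eigenset of $\cdot G$ exactly when $A^{-1}$ is an eigenset of $G\cdot$; that these are symmetric $S_G$-linear operators is the content of Example \ref{GroupActionsExample} applied to left and right translations. For the pair $(\cdot S_G,S_G\cdot)$ the same conjugation shows that $A$ is an eigenset of $\cdot S_G$ iff $A^{-1}$ is an eigenset of $\{lm_C:C\in S_G^{-1}\}$, so the remaining---and main---point is the lemma that left-multiplication eigensets depend only on the bornology of bounded sets, i.e. the families $S_G\cdot$ and $\{lm_C:C\in S_G^{-1}\}$ have the same eigensets. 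This is where the real work lies. I would prove that $C\cdot A\equiv A$ for every bounded $C$ once $B\cdot A\equiv A$ holds for all $B\in S_G$: the inclusion $C\cdot A\setminus A\subset B\cdot A\setminus A$ (for $C\subset B\in S_G$) handles one direction, while for the reverse I use the containment $A\setminus C\cdot A\subset C\cdot\bigl(C^{-1}A\cap A^c\bigr)$ --- valid because if $a\in A\setminus C\cdot A$ and $c\in C$ then $c^{-1}a\in C^{-1}A\cap A^c$ --- together with the facts that $C^{-1}A\cap A^c$ is bounded (it sits inside $B'A\setminus A$ for any $B'\in S_G$ containing the bounded set $C^{-1}$) and that $C$ times a bounded set is bounded.

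Finally I would assemble the conclusion. Having shown $\Phi(A)=A^{-1}$ is a bijection between the eigensets of $\cdot S_G$ and those of $S_G\cdot$ (resp. of $\cdot G$ and $G\cdot$) respecting $\cup,\cap,{}^c$ and boundedness, $\Phi$ is an isomorphism of scaled Boolean algebras $(G,S_G,BA_{\cdot S_G})\to(G,S_G,BA_{S_G\cdot})$, and likewise for the element-translation pair. Since the end space of a scaled linear space is $Ends(G,S_G,BA_{\mathcal{F}})$, whose topology has the clopen basis $\{cl(A)\cap Ends:A\in BA_{\mathcal{F}}\}$ produced in Lemma \ref{BasicPropOfAlgOfEnds} and the subsequent Theorem, a scale-preserving Boolean isomorphism sends ends to ends and basic clopen sets to basic clopen sets, hence induces a homeomorphism of coronas. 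I expect the containment lemma for left-multiplication eigensets (the step $A\setminus C\cdot A\subset C\cdot(C^{-1}A\cap A^c)$ and boundedness of the relevant products) to be the only genuine obstacle; everything else is formal once $\Phi$ is seen to be a scale-preserving Boolean automorphism interchanging the two translation families.
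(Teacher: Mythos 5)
Your proposal is correct and takes essentially the same route as the paper: the inversion $A\mapsto A^{-1}$, which conjugates right translations into left translations ($\,(A\cdot B)^{-1}=B^{-1}\cdot A^{-1}$) and induces a boundedness-preserving isomorphism of the eigenset Boolean algebras, hence a homeomorphism of end spaces. The paper's own proof is a three-line version of exactly this; your auxiliary lemma that left-multiplication eigensets depend only on the bornology (needed because $S_G^{-1}$ need not literally equal $S_G$) carefully fills a step the paper passes over in silence.
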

\begin{proof}
Notice that for all $A,B,C\in 2^G$, then $(A\cup C)\cdot B=A\cdot B\cup C\cdot B$ and $B\cdot (A\cup C)=B\cdot A\cup B\cdot C$.

$A$ is an eigenset of $ S_G\cdot$ if and only if, for each $B\in S_G$
there is $K\in S_G$ such that $A\cdot B\cup K=A\cup K$ which is equivalent to $B^{-1}\cdot A^{-1}\cup K^{-1}=A^{-1}\cup K^{-1}$.
That means $A\to A^{-1}$ establishes homeomorphism of end spaces.
\end{proof}

\begin{Definition}\label{DefActionOfSGroups}
An \textbf{action of a scaled group} $(G,S_G)$ on a scaled space $(X,S_X)$
is a group action $(g,x)\to g\cdot x$ such that $B\cdot K$ is bounded in $X$ whenever $B\in S_G$ and $K\in S_X$. It is 
\textbf{cobounded} if there is $K_0\in S_X$ such that $X=G\cdot K_0$.
The action is \textbf{proper} if for every $K\in S_X$ the set $\{g\in G \mid (g\cdot K)\cap K\ne\emptyset\}$ (called the \textbf{stabilizer} of $K$) is bounded in $G$.
\end{Definition}

\begin{Observation}
Notice that any action of a scaled group $(G,S_G)$ on a scaled space $(X,S_X)$ induces two scaled linear spaces:\\
1. One consisting of $m_g:2^X\to 2^X$ ($g\in G$), $m_g(A):=g\cdot A$ is denoted by $(X,S_X,G\cdot)$.\\
2. The other consisting of $m_B:2^X\to 2^X$ ($B\in S_G$, $B\ne\emptyset$), $m_B(A):=B\cdot A$ is denoted by $(X,S_X,S_G\cdot)$.\\
If $S_G$ consists of all finite subsets of $G$, then the two linear scaled spaces have the same eigensets, hence the same end spaces.
\end{Observation}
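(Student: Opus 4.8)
The plan is to check first that $G\cdot=\{m_g\}_{g\in G}$ and $S_G\cdot=\{m_B\}_{B\in S_G,\,B\ne\emptyset}$ are genuine families of $S_X$-linear operators, and then to prove that when $S_G$ is the scale of all finite subsets of $G$ the two families have literally the same eigensets. The linearity axioms are routine. Additivity holds exactly, since $g\cdot(C\cup D)=(g\cdot C)\cup(g\cdot D)$ and $B\cdot(C\cup D)=(B\cdot C)\cup(B\cdot D)$. For the boundedness axiom, if $K\in S_X$ then $B\cdot K\in S_X$ by Definition \ref{DefActionOfSGroups}, and $g\cdot K\subset B\cdot K$ whenever $g\in B\in S_G$, such a $B$ existing because $S_G$ covers $G$. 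The third axiom holds with equality: $m_g(X)=g\cdot X=X$ because $x\mapsto g\cdot x$ is a bijection of $X$, and for nonempty $B$ one has $m_B(X)=B\cdot X=X$ since $B\cdot X\supset g\cdot X=X$ for any $g\in B$. Finally, the requirement that $(X,S_X,2^X)$ have no internal ends holds as soon as $S_X$ covers $X$: then no singleton is unbounded, so there are no points at infinity and, by Observation \ref{ObsAboutInternalEnds}, no internal ends.

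For the equality of eigensets, one inclusion is immediate. Every singleton $\{g\}$ is a nonempty finite set, hence an element of $S_G$ with $m_{\{g\}}=m_g$, so any eigenset of $S_G\cdot$ is at once an eigenset of $G\cdot$.

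The reverse inclusion is the heart of the matter. Suppose $A$ is an eigenset of $G\cdot$, so that for every $g\in G$ both $(g\cdot A)\Delta A$ and $(g\cdot(X\setminus A))\Delta(X\setminus A)$ are bounded. Fix a nonempty $B=\{g_1,\dots,g_n\}\in S_G$ and write $B\cdot A=\bigcup_{i=1}^n g_i\cdot A$. Then $(B\cdot A)\setminus A\subset\bigcup_{i=1}^n\big((g_i\cdot A)\setminus A\big)$ is a finite union of bounded sets, hence bounded because $S_X$ is closed under finite unions; and $A\setminus(B\cdot A)\subset A\setminus(g_1\cdot A)$ is bounded. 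Thus $B\cdot A\equiv A$ mod $S_X$, and applying the identical argument to $X\setminus A$ gives $B\cdot(X\setminus A)\equiv X\setminus A$ mod $S_X$. Hence $A$ is an eigenset of $S_G\cdot$. Since the two families now have the same eigensets, the induced Boolean algebras $BA_{G\cdot}$ and $BA_{S_G\cdot}$ coincide as subfamilies of $2^X$; consequently the scaled Boolean algebras $(X,S_X,BA_{G\cdot})$ and $(X,S_X,BA_{S_G\cdot})$ are identical, and so are their ends together with the clopen topology on them, which yields the asserted identification of end spaces.

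The step I expect to demand the most care is the reverse inclusion, specifically the control of $A\setminus(B\cdot A)$. It is essential here that $B$ be finite, so that $(B\cdot A)\setminus A$ is a finite---and therefore bounded---union of the bounded sets $(g_i\cdot A)\setminus A$, and that $B$ be nonempty, so that $A\setminus(B\cdot A)$ is dominated by the single bounded set $A\setminus(g_1\cdot A)$. Dropping the hypothesis that $S_G$ consists of finite subsets would break exactly this passage from per-translate control to control of the whole product $B\cdot A$, which is why the two end spaces need not agree for a general scale on $G$.
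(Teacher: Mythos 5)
Your proof is correct and takes essentially the same route as the paper's: the whole content is that for a finite nonempty $B=\{g_1,\dots,g_n\}$ the symmetric difference $(B\cdot A)\Delta A$ is controlled by the finitely many bounded sets $(g_i\cdot A)\Delta A$, which is exactly the paper's one-line argument. Your write-up is in fact a bit more careful, since you separately dominate $A\setminus(B\cdot A)$ by the single bounded set $A\setminus(g_1\cdot A)$ instead of asserting outright that the whole symmetric difference is a finite union of bounded sets, and you also verify the $S_X$-linearity axioms that the paper leaves implicit.
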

\begin{proof}
$A$ being an eigenset of $(X,S_X,G\cdot)$ means that for each $g\in G$ the set $(g\cdot A)\Delta A$ is bounded in $X$.
Therefore, if $B\subset G$ is finite and non-empty,
then $(B\cdot A)\Delta A$ is a finite union of bounded subsets of $X$, hence it is bounded.
\end{proof}

\begin{Proposition}
Let $H$ be a subgroup of a scaled group $(G,S_G)$
and $S_H:=S_G\cap H$. \\
1. $(H,S_H)$ is a scaled group.\\
2. The natural actions $(h,x)\to h\cdot x$ and $(h,x)\to x\cdot h^{-1}$ of $(H,S_H)$ on $(G,S_G)$ are proper. Moreover, if $H$ is of bounded index, then the actions are cobounded.
\end{Proposition}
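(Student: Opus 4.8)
The plan is to take $S_H=\{B\cap H:B\in S_G\}$, the trace scale making $(H,S_H)$ the scaled subspace of $(G,S_G)$ carried by $H$. For part~1 I would first verify that this is a scale covering $H$: it contains $\emptyset=\emptyset\cap H$; it is closed under finite unions since $(B_1\cap H)\cup(B_2\cap H)=(B_1\cup B_2)\cap H$; and it covers $H$ because every $h\in H\subseteq G=\bigcup S_G$ lies in some $B\cap H$. The defining product axiom of Definition~\ref{DefScaledGroup} is the only substantive point: given $B_1\cap H,B_2\cap H\in S_H$, the fact that $H$ is a subgroup yields $(B_1\cap H)(B_2\cap H)^{-1}\subseteq(B_1B_2^{-1})\cap H$, and the right-hand side lies in $S_H$ because $B_1B_2^{-1}\in S_G$. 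Thus this product is controlled by an element of $S_H$; this single containment is where the subgroup hypothesis and the scaled-group axiom of $G$ are used in tandem.

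Next I would check that both $(h,x)\mapsto h\cdot x$ and $(h,x)\mapsto x\cdot h^{-1}$ are actions of $(H,S_H)$ on the scaled space $(G,S_G)$ in the sense of Definition~\ref{DefActionOfSGroups}, i.e.\ that the relevant products are bounded. Writing $B=B_0\cap H\subseteq B_0$ with $B_0\in S_G$, the right action produces $KB^{-1}\subseteq KB_0^{-1}\in S_G$ directly from the product axiom. The left action produces $BK\subseteq B_0K$, and here the only nuisance is that $S_G$ need not be closed under inversion; I would dispose of this by fixing $B_e\in S_G$ with $e\in B_e$, noting $K^{-1}\subseteq B_eK^{-1}=:N\in S_G$, hence $K\subseteq N^{-1}$ and $B_0K\subseteq B_0N^{-1}\in S_G$. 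So both actions satisfy the boundedness requirement.

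The heart of part~2 is properness. For the left action and $K\in S_G$, the condition $(h\cdot K)\cap K\neq\emptyset$ means $hk_1=k_2$ for some $k_1,k_2\in K$, i.e.\ $h=k_2k_1^{-1}\in KK^{-1}$; hence the stabilizer of $K$ sits inside $(KK^{-1})\cap H$, which belongs to $S_H$ since $KK^{-1}\in S_G$. Thus the stabilizer is bounded in $H$ and the action is proper. For the right action the same reckoning gives $h\in K^{-1}K$, so the stabilizer lies in $(K^{-1}K)\cap H$; since $K^{-1}K$ is bounded in $G$ by the product estimate above, its trace on $H$ is bounded in $H$, and properness follows again.

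Finally, coboundedness under bounded index is essentially a restatement. Bounded index furnishes a bounded transversal $K_0\in S_G$ with $G=HK_0$ (respectively $G=K_0H$), and these are precisely the assertions that the $H$-orbit of $K_0$ under the left (respectively right) action exhausts $G$; i.e.\ the actions are cobounded. I expect the main obstacle to be the bookkeeping forced by the fact that $S_H$ is only the trace scale and $S_G$ need not be a bornology: every product that arises must be shown to lie inside a genuine scale element rather than merely inside $H$, and the group axiom for $(H,S_H)$ holds through the containment $(B_1\cap H)(B_2\cap H)^{-1}\subseteq(B_1B_2^{-1})\cap H$ rather than an equality. Keeping the distinction between \emph{bounded} and \emph{belongs to the scale} straight is the one delicate thread running through the whole argument.
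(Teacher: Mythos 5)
Your argument is correct and follows essentially the same route as the paper, whose entire proof of part 2 is the observation that the stabilizer of $K$ lies in $H\cap(K\cdot K^{-1})$ and that bounded index is literally the coboundedness statement (part 1 is dismissed as obvious). You are in fact more careful than the paper about the asymmetry caused by $S_G$ not being closed under inversion and about the distinction between ``bounded'' and ``belongs to the trace scale $S_H$'' in the axiom $(B_1\cap H)(B_2\cap H)^{-1}\subseteq (B_1B_2^{-1})\cap H$, which is the one point the paper's ``is obvious'' glosses over.
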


\begin{proof}
1. Is obvious.\\
2.
Recall that $H$ is of bounded index if $G=H\cdot B$, for some bounded subset $B$ of $G$. Therefore, the action is cobounded. Notice the stabilizer of $B$ is contained in $H\cap (B\cdot B^{-1})$, so it is bounded in $H$ and the action is proper.
\end{proof}

We follow a description of coarse spaces (quite often our terminology is that of \textbf{large scale spaces}) as in \cite{DH}. It is equivalent to Roe's definition of those spaces in \cite{Roe lectures}.

Recall that a \textbf{star} $st(x,\mathcal{U})$ of $x\in X$ with respect to a family $\mathcal{U}$ of subsets of $X$ is defined as the union of $U\in \mathcal{U}$ containing $x$. If $A\subset X$, then $st(A,\mathcal{U}):=\bigcup\limits_{x\in A}st(x,\mathcal{U})$. Given two families $\mathcal{U},\mathcal{V}$ of subsets of $X$,
$st(\mathcal{U},\mathcal{V})$ is defined as the family $st(A,\mathcal{V})$, $A\in \mathcal{U}$.

\begin{Definition}
A \textbf{large scale space} is a set $X$ equipped with a family $\mathbb{LSS}$ of covers (called \textbf{uniformly bounded} covers) satisfying the following two conditions:\\
1. 
$st(\mathcal{U},\mathcal{V})\in \mathbb{LSS}$ if $\mathcal{U},\mathcal{V}\in \mathbb{LSS}$.\\
2. If $\mathcal{U}\in \mathbb{LSS}$ and every element of $\mathcal{V}$ is contained in some element of $\mathcal{U}$, then $\mathcal{V}\in \mathbb{LSS}$.

Sets which are contained in an element of $\mathcal{U}\in \mathbb{LSS}$ are called \textbf{bounded}.
Thus, each large scale structure on $X$ induces a natural scale on $X$.
\end{Definition}

The following is in some sense dual to the famous \v Svarc-Milnor Lemma (see \cite{BDM1} and \cite{MRD} for other generalizations of the classical version of that lemma). Proposition \ref{ScaledActionsInduceLargeScaleStructure} establishes existence of certain large scale structures and our version of the \v Svarc-Milnor Lemms (see
\ref{Svarc-MilnorForScaledGroups}) establishes uniqueness of such a structure.
\begin{Proposition}\label{ScaledActionsInduceLargeScaleStructure}
If a scaled group $(G,S_G)$ acts properly and coboundedly on a scaled space $(X,S_X)$, then it induces a large scale structure on $X$ that is coarsely equivalent to that on $G$.
\end{Proposition}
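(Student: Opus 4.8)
The plan is to build the large scale structure on $X$ out of the orbit structure of the action and then exhibit an explicit coarse equivalence with $G$. Throughout I will use the \emph{left-invariant} large scale structure on $(G,S_G)$, whose uniformly bounded covers are exactly those refining $\mathcal{U}_B^G:=\{gB:g\in G\}$ for some $B\in S_G$. This is a legitimate large scale structure because the axiom $B_1B_2^{-1}\in S_G$, together with $S_G$ covering $G$, forces $S_G$ to be closed (up to enlargement) under products and inverses, and this bounds $\st(\mathcal{U}_B^G,\mathcal{U}_{B'}^G)$ by some $\mathcal{U}_{B''}^G$.

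First I would define the candidate structure on $X$. For $K\in S_X$ set $\mathcal{V}_K:=\{g\cdot K:g\in G\}$ and declare a cover of $X$ uniformly bounded if and only if it refines some $\mathcal{V}_K$. Two facts make this a large scale structure whose bounded sets are exactly $S_X$: coboundedness ($X=G\cdot K_0$) guarantees that $\mathcal{V}_{K\cup K_0}$ genuinely covers $X$, while the action axiom ($B\cdot K$ bounded for $B\in S_G$, $K\in S_X$) shows each $g\cdot K$ is $S_X$-bounded, and $K\subset e\cdot K$ gives the converse. The substantive point is closure under stars: if $g'K'$ meets $gK$, then $h:=g^{-1}g'$ satisfies $hK'\cap K\neq\emptyset$, so with $K'':=K\cup K'$ the element $h$ lies in the stabilizer $S:=\{h:hK''\cap K''\neq\emptyset\}$, which is \textbf{bounded by properness}. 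Hence $\st(gK,\mathcal{V}_{K'})\subset g\cdot(S\cdot K')$, and $S\cdot K'$ is $S_X$-bounded, so $\st(\mathcal{V}_K,\mathcal{V}_{K'})$ refines $\mathcal{V}_{S\cdot K'}$.

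Next I would produce the coarse equivalence. Fix $x_0\in K_0$ and let $\phi:G\to X$, $\phi(g):=g\cdot x_0$; using coboundedness choose for each $x$ some $g_x$ with $x\in g_x\cdot K_0$ and set $\psi:X\to G$, $\psi(x):=g_x$. I would check that $\phi$ is large scale continuous via $\phi(gB)\subset g\cdot(B\cdot K_0)$ with $B\cdot K_0\in S_X$, so $\phi(\mathcal{U}_B^G)$ refines $\mathcal{V}_{B\cdot K_0}$. For $\psi$, if $y\in gK$ then $g_y^{-1}g\in T:=\{h:hK\cap K_0\neq\emptyset\}\in S_G$ (again properness), whence $\psi(gK)\subset g\cdot T^{-1}$ and $\psi(\mathcal{V}_K)$ refines $\mathcal{U}_{T^{-1}}^G$. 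Finally, properness applied to $K_0$ shows both composites are uniformly close to the identity: for $\psi\circ\phi$, the relation $g\cdot x_0\in g_{gx_0}\cdot K_0$ gives $g_{gx_0}^{-1}g\in\mathrm{Stab}(K_0):=\{h:hK_0\cap K_0\neq\emptyset\}\in S_G$, so $g$ and $\psi\phi(g)$ lie in a common element of $\mathcal{U}_{\mathrm{Stab}(K_0)}^G$; for $\phi\circ\psi$, both $x$ and $\phi\psi(x)=g_x\cdot x_0$ lie in $g_x\cdot K_0\in\mathcal{V}_{K_0}$.

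The main obstacle is conceptual rather than computational: one must select the \emph{correct} generating covers. The naive choice $\{B\cdot x:x\in X\}$ fails, because pushing it along the left orbit map introduces conjugates $gBg^{-1}$ that are not uniformly bounded; using instead the $G$-translates $\{g\cdot K\}$ of a fixed $S_X$-bounded set removes the conjugation and lets properness bound every overlap set that arises. Once the structure is set up this way, every verification reduces to replacing an overlap condition $hK_1\cap K_2\neq\emptyset$ by membership in a stabilizer and invoking Definition \ref{DefActionOfSGroups}, and I expect these steps to be routine.
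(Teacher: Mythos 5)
Your proposal is correct and follows essentially the same route as the paper: both generate the large scale structure on $X$ by the translate covers $\{g\cdot K\}_{g\in G}$ of a bounded set, bound the stars by converting the overlap condition into membership in a stabilizer (bounded by properness), and obtain the coarse equivalence via the orbit map $g\mapsto g\cdot x_0$ together with coboundedness. The only cosmetic difference is that you exhibit an explicit coarse inverse $\psi$, whereas the paper argues that the orbit map is a coarse equivalence onto the coarsely dense subset $G\cdot x_0$.
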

\begin{proof}
Consider refinements of covers of $X$ of the form $C_B:=\{g\cdot B\}_{g\in G}$
for some $B\in S_X$ (hence $G\cdot B=X$). To see that $st(C_B,C_B)$ is such a refinement,
look at all $g\in G$ such that for some fixed $h\in G$, $(g\cdot B)\cap (h\cdot B)\ne\emptyset$. What that means is that $h^{-1}\cdot g\in B'$,
where $B'$ is the stabilizer of $B$. Hence $g\in h\cdot B'$
and the the cover $st(C_B,C_B)$ refines $C_{B'\cdot B}$. 
Indeed, given $h\in G$, the union of all $g\cdot B$ intersecting
$h\cdot B$ is contained in $h\cdot B'\cdot B$.

As $G$ acts on itself, the above recipe says that the large scale structure on $G$ consists of refinements of covers of $G$ of the form $C_B:=\{g\cdot B\}_{g\in G}$
for some $B\in S_G$. Pick $x_0\in B_0$, where $G\cdot B_0=X$, and consider $f:G\to X$, $f(g):=g\cdot x_0$. $f$ sends uniformly bounded covers of $G$ to uniformly bounded covers of $G\cdot x_0$ and $f^{-1}$ does the same. Indeed, $f(g\cdot B)$,
$B\in S_G$, is equal to $g\cdot B\cdot x_0$.
Conversely, if $f(g)\in h\cdot B$, $B\in S_X$ containing $x_0$,
then $g\cdot x_0\in h\cdot B$ and $h^{-1}\cdot g$ belongs to the stabilizer $B'$ of $B$. Thus, $f^{-1}(C_B)$ refines $C_{B'}$.

Moreover,
$st(G\cdot x_0,C_{B_0})=X$, so the inclusion $G\cdot x_0\to X$ is a coarse equivalence.
\end{proof}

\begin{Remark}\label{ScaledVsLargeScaleGroups}
In view of \ref{ScaledActionsInduceLargeScaleStructure} every scaled group $(G,S_G)$ has a natural large scale structure. In \cite{MRD} such groups are called \textbf{large scale groups}.
\end{Remark}

\begin{Theorem} \label{Svarc-MilnorForScaledGroups} 
Given a scaled group $(G,S_G)$ consider the large scale structure $\mathcal{LSS}$ on $G$ induced by refinements of the covers $C_B:=\{g\cdot B\}_{g\in G}$, $B\in S_G$. 
Suppose $(G,S_G)$ acts by uniform coarse equivalences on a large scale space $X$. If the action is proper and cobounded, then for each $x_0\in X$ the map $g\to g\cdot x_0$ is a coarse equivalence from $(G,\mathcal{LSS})$ to $X$.
\end{Theorem}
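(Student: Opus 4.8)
The plan is to verify directly that the orbit map $f:G\to X$, $f(g)=g\cdot x_0$, is a coarse equivalence by checking the three standard ingredients: that $f$ is large scale continuous (bornologous), that its image $G\cdot x_0$ is coarsely dense in $X$, and that $f$ admits a bornologous coarse inverse. It is convenient to choose $x_0\in K_0$, where $K_0\in S_X$ is a bounded set with $X=G\cdot K_0$ provided by coboundedness. Throughout, the hypothesis that $(G,S_G)$ acts by \emph{uniform} coarse equivalences will be used in the following form: for every uniformly bounded cover $\mathcal U$ of $X$ there is a uniformly bounded cover $\mathcal V$ of $X$ such that $\{g\cdot U:U\in\mathcal U\}$ refines $\mathcal V$ for \emph{all} $g\in G$ simultaneously; in particular, the translates $g\cdot L$ of a single bounded set $L$ stay uniformly bounded as $g$ ranges over $G$.

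First I would establish that $f$ is bornologous. Every uniformly bounded cover of $(G,\mathcal{LSS})$ refines some $C_B=\{g\cdot B\}_{g\in G}$ with $B\in S_G$, so it suffices to show that $f(C_B)=\{g\cdot(B\cdot x_0)\}_{g\in G}$ refines a uniformly bounded cover of $X$. Since $B\in S_G$ and $\{x_0\}$ is bounded, the action axiom gives that $B\cdot x_0$ is bounded in $X$, and the uniformity of the action then makes $\{g\cdot(B\cdot x_0)\}_{g\in G}$ uniformly bounded. The same two facts applied to $K_0$ show that $\{g\cdot K_0\}_{g\in G}$ is a uniformly bounded cover of $X$ (it covers $X$ by coboundedness); since $x_0\in K_0$ forces $g\cdot x_0\in g\cdot K_0$, the orbit $G\cdot x_0$ meets every member of this cover and is therefore coarsely dense, giving coarse surjectivity.

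Next I would build a coarse inverse $h:X\to G$ by selecting, for each $x\in X$, some $g$ with $x\in g\cdot K_0$ and setting $h(x)=g$. Here properness enters decisively: if $x\in g_1\cdot K_0\cap g_2\cdot K_0$, then $g_2^{-1}g_1$ carries a point of $K_0$ into $K_0$, so $g_2^{-1}g_1$ lies in the (bounded) stabilizer of $K_0$, which bounds the ambiguity in the choice of $h(x)$ and shows that the two compositions $f\circ h$ and $h\circ f$ are close to the respective identities. To see that $h$ is itself bornologous one takes a uniformly bounded pair $\{x,y\}$ in $X$, writes $x=h(x)\cdot k$, $y=h(y)\cdot k'$ with $k,k'\in K_0$, and applies the map $z\mapsto h(x)^{-1}\cdot z$: uniformity of the action keeps $\{h(x)^{-1}x,h(x)^{-1}y\}$ inside a fixed bounded set $L$, whence the group element $h(x)^{-1}h(y)$ sends the point $k'\in K_0\cup L$ into $K_0\cup L$ and so lies in the bounded stabilizer of $K_0\cup L$. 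This places $h(x)$ and $h(y)$ in a common translate of a bounded subset of $G$, i.e. close in $\mathcal{LSS}$.

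The main obstacle is the correct and repeated exploitation of the word \emph{uniform} in ``uniform coarse equivalences'': all three steps hinge on the uniformity that lets translates of a fixed bounded set (and images of bounded pairs under the maps $z\mapsto g^{-1}\cdot z$) stay uniformly bounded independently of $g$, and it is precisely this uniformity, combined with properness, that forces the a priori large scale structure on $X$ to agree coarsely with the one induced by the action as in Proposition \ref{ScaledActionsInduceLargeScaleStructure}. Once that uniform control is in place, the closeness and refinement estimates are routine, and assembling them yields that $f$ is a coarse equivalence.
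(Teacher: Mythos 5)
Your proof is correct, but it takes a genuinely different route from the paper's. The paper does not construct a coarse inverse at all: it shows that the uniformity of the action forces every uniformly bounded cover $\mathcal{U}$ of $X$ to refine a cover of the form $\{g\cdot W\}_{g\in G}$ with $W=\st(B_0,\mathcal{V})$ bounded, concludes that the given large scale structure on $X$ coincides with the one induced by the action, and then simply invokes Proposition \ref{ScaledActionsInduceLargeScaleStructure}, where the orbit map was already shown to be a coarse equivalence for that induced structure. You instead verify the coarse equivalence from first principles: bornologousness of $f$ via the uniform boundedness of $\{g\cdot(B\cdot x_0)\}_{g\in G}$, coarse density of the orbit via $C_{K_0}$, and an explicit coarse inverse $h$ whose well-definedness up to closeness and whose bornologousness both reduce, via properness, to boundedness of stabilizers of $K_0$ and of $K_0\cup\st(K_0,\mathcal{V})$. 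The paper's reduction is shorter given the earlier proposition and makes transparent the ``uniqueness of the structure'' interpretation announced before the theorem; your argument is self-contained, does not rely on the two structures being literally identical, and isolates exactly where uniformity and properness are used, at the cost of some bookkeeping with the choice function $h$. Both arguments are sound; the only point worth tightening in yours is the phrase ``uniformity of the action keeps $\{h(x)^{-1}x,h(x)^{-1}y\}$ inside a fixed bounded set $L$'' --- you should say explicitly that $h(x)^{-1}\cdot U$ lies in an element of $\mathcal{V}$ meeting $K_0$, hence in $L:=\st(K_0,\mathcal{V})$, so that $L$ is independent of $U$, $x$ and $y$.
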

\begin{proof}
Choose $B_0\in S_X$, $S_X$ being the family of all bounded subsets of $X$, such that $G\cdot B_0=X$
$(G,S_G)$ acts by uniform coarse equivalences on a large scale space $X$ means that, for each uniformly bounded cover $\mathcal{U}$ of $X$,
there is a uniformly bounded cover $\mathcal{V}$ of $X$ so that each $g\cdot \mathcal{U}$, $g\in G$, refines $\mathcal{V}$.
Each $U\in \mathcal{U}$ contains $x_U\in U$ belonging to $g_U\cdot B_0$ for some $g_U\in G$. Therefore
$g_U^{-1}\cdot U$ is contained in an element $V(U)$ of $\mathcal{V}$
and intersecting $B_0$. Hence, $ g_U^{-1}\cdot U\subset W:=st(B_0,V)$ and $U\subset g_U\cdot W\in S_X$.
That means the large scale structure on $X$ is identical to the one described in \ref{ScaledActionsInduceLargeScaleStructure}.
Apply \ref{ScaledActionsInduceLargeScaleStructure}.
\end{proof}

The following result mixes right and left multiplication actions of a group $G$ on itself. It says that the two ways of defining end spaces on a scaled groups are identical.
\begin{Proposition}\label{CClopenEqualEigenset}
Given a scaled group $(G,S_G)$ consider the large scale structure $\mathcal{LSS}$ on $G$ induced by refinements of the covers $C_B:=\{B\cdot g\}_{g\in G}$, $B\in S_G$. $A$ is coarsely clopen in $\mathcal{LSS}$ if and only if $A$ is an eigenset of $S_G\cdot$.
Therefore, the end space of $(G,S_G,S_G\cdot)$ is identical
with the end space of the coarse space $(G,\mathcal{LSS})$.
\end{Proposition}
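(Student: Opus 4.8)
The plan is to reduce both notions to a single concrete condition phrased in terms of left translates. First I would note that the covers $C_B=\{B\cdot g\}_{g\in G}$, $B\in S_G$, are cofinal among the uniformly bounded covers of $\mathcal{LSS}$, and that passing to a refinement only shrinks stars; hence coarse clopenness of $A$ may be tested on the $C_B$ alone. A direct computation gives $\st(A,C_B)=\bigcup\{B\cdot g:(B\cdot g)\cap A\neq\emptyset\}=(B\cdot B^{-1})\cdot A$, and likewise $\st(G\setminus A,C_B)=(B\cdot B^{-1})\cdot(G\setminus A)$. Writing $K:=B\cdot B^{-1}\in S_G$, so that the identity $e$ lies in $K$, and using the containment $K\subseteq K\cdot K^{-1}$ to go between $K$ and $K\cdot K^{-1}$, coarse clopenness of $A$ becomes the statement that $(K\cdot A)\cap(K\cdot(G\setminus A))$ is bounded for every $K\in S_G$. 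On the other side, $A$ is an eigenset of $S_G\cdot$ exactly when $B\cdot A\equiv A$ and $B\cdot(G\setminus A)\equiv G\setminus A$ mod $S_G$ for every nonempty $B\in S_G$, i.e. all four sets $B\cdot A\setminus A$, $A\setminus B\cdot A$, $B\cdot(G\setminus A)\setminus(G\setminus A)$, $(G\setminus A)\setminus B\cdot(G\setminus A)$ are bounded.

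For the implication eigenset $\Rightarrow$ coarsely clopen I would argue as follows. Fix $K\in S_G$. Applying the eigenset condition with $B=K$ gives bounded sets $D_1,D_2$ with $K\cdot A\subseteq A\cup D_1$ and $K\cdot(G\setminus A)\subseteq(G\setminus A)\cup D_2$. Then
\[
(K\cdot A)\cap(K\cdot(G\setminus A))\subseteq(A\cup D_1)\cap((G\setminus A)\cup D_2)\subseteq D_1\cup D_2,
\]
since $A\cap(G\setminus A)=\emptyset$. This is bounded, so $A$ is coarsely clopen.

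The converse is the heart of the matter. Assume $(K\cdot A)\cap(K\cdot(G\setminus A))$ is bounded for all $K\in S_G$, and fix a nonempty $B\in S_G$; choose $K\in S_G$ with $B\subseteq K$ and $e\in K$ (e.g. $K=B\cup B_0$ for some $B_0\ni e$ in $S_G$), so that $A\subseteq K\cdot A$ and $G\setminus A\subseteq K\cdot(G\setminus A)$. The containment $B\cdot A\setminus A\subseteq(K\cdot A)\cap(G\setminus A)\subseteq(K\cdot A)\cap(K\cdot(G\setminus A))$ is immediate and shows $B\cdot A\setminus A$ is bounded. The key observation is the less obvious inclusion $A\setminus B\cdot A\subseteq A\cap B\cdot(G\setminus A)$: if $a\in A$ but $a\notin B\cdot A$, then $b^{-1}a\notin A$ for every $b\in B$, so picking any $b\in B$ we get $a=b\,(b^{-1}a)\in B\cdot(G\setminus A)$. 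Hence $A\setminus B\cdot A\subseteq A\cap B\cdot(G\setminus A)\subseteq(K\cdot A)\cap(K\cdot(G\setminus A))$ is bounded as well, so $B\cdot A\equiv A$. Since the condition that $(K\cdot A)\cap(K\cdot(G\setminus A))$ be bounded is symmetric under interchanging $A$ and $G\setminus A$, the same argument applied to $G\setminus A$ yields $B\cdot(G\setminus A)\equiv G\setminus A$. Thus $A$ is an eigenset of $S_G\cdot$. I expect this inclusion $A\setminus B\cdot A\subseteq A\cap B\cdot(G\setminus A)$, together with the attendant enlargement of $B$ to a $K$ containing $e$, to be the main obstacle; everything else is bookkeeping with bounded sets.

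Finally, the equivalence just established shows that the Boolean algebra of eigensets of $S_G\cdot$ coincides with $\mathcal{CC}(G)$, the coarsely clopen subsets of $(G,\mathcal{LSS})$. Since the bounded sets of $\mathcal{LSS}$ are precisely the $S_G$-bounded sets (a fact built into the construction of $\mathcal{LSS}$: each element $B\cdot g$ of $C_B$ is $S_G$-bounded and conversely every $B_0\in S_G$ occurs as $B_0\cdot e\in C_{B_0}$), the two quotients by the ideal of bounded sets agree. Therefore the space of ends of $(G,S_G,S_G\cdot)$, defined as the ends of the scaled Boolean algebra of eigensets, is identical with the space of ends of the coarse space $(G,\mathcal{LSS})$, namely the Stone space of $\mathcal{CC}(G)$ modulo bounded sets.
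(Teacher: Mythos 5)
Your proof is correct and rests on exactly the same key computation as the paper's, namely $\st(A,C_B)=(B\cdot B^{-1})\cdot A$ (the paper states this as $\st(A,C_B)\setminus A=B\cdot B^{-1}\cdot A\setminus A$ and declares that it suffices). You simply carry out in full the bookkeeping the paper leaves implicit, including the inclusion $A\setminus B\cdot A\subset A\cap B\cdot(G\setminus A)$ and the identification of the bounded sets of $\mathcal{LSS}$ with the $S_G$-bounded sets.
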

\begin{proof}
Let us show $st(A,C_B)\setminus A=B\cdot B^{-1}\cdot A\setminus A$. That suffices to prove \ref{CClopenEqualEigenset}.
$g\in st(A,C_B)\setminus A$ if and only $g\notin A$ and there is $h\in G$ such that $g\in B\cdot h$ and $(B\cdot h)\cap A\ne\emptyset$. Thus, $h\in B^{-1}\cdot A$ and $g\in B\cdot B^{-1}\cdot A\setminus A$.
\end{proof}

In view of Propositions \ref{CClopenEqualEigenset} and \ref{ScaledActionsInduceLargeScaleStructure} the following question seems relevant:

\begin{Question}
Suppose a scaled group $(G,S_G)$ acts properly, transitively, and coboundedly on a scaled space $(X,S_X)$. Are end spaces of $(G,S_G,S_G\cdot)$ and $(X,S_X,S_G\cdot)$ homeomorphic.

What if we replace transitive actions by cobounded actions?
\end{Question}

In the remainder of this section we will rephrase results from \cite{MRD}
related to large scale groups in terms of scaled groups using Remark \ref{ScaledVsLargeScaleGroups} and Proposition \ref{CClopenEqualEigenset}.

\begin{Definition}\label{LocallyBoundedGroupDef}
A scaled group $(G,S_G)$ is \textbf{locally bounded} if for every bounded subset $B$ of $G$ the subgroup $\lbrack B\rbrack$ of $G$ generated by $B$ is bounded.
\end{Definition}

\begin{Proposition}\label{LocallyBoundedGroupsCase}\cite{MRD} 
Suppose the scale $S_G$ of the scale group $G$ has a countable basis.
If $G$ is an unbounded and locally bounded group, then its number of ends of $(G,S_G)$ is infinite.
\end{Proposition}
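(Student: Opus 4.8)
The plan is to work with the scaled linear space $(G,S_G,S_G\cdot)$, whose end space is the one referred to in the statement: by Proposition \ref{CClopenEqualEigenset} the various end spaces attached to $(G,S_G)$ agree, so ``number of ends of $(G,S_G)$'' is well defined. I would produce \emph{infinitely many pairwise disjoint unbounded eigensets} of $S_G\cdot$. Once this is done the conclusion is immediate: an unbounded eigenset $A$ has the finite-intersection property by itself, so by Zorn's Lemma it lies in some end $V_A$; and if $A,A'$ are eigensets with $A\cap A'$ bounded, then no single end can contain both, since every finite intersection along an end is unbounded. Hence disjoint unbounded eigensets force distinct ends, and infinitely many of them force infinitely many ends.

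First I would fix a convenient cofinal chain of bounded subgroups. Using that $S_G$ has a countable basis $\{B_n\}$, set $H_n:=[\,B_1\cup\dots\cup B_n\,]$. By local boundedness (Definition \ref{LocallyBoundedGroupDef}) each $H_n$ is bounded; the $H_n$ are increasing subgroups, they are cofinal in $S_G$, and $\bigcup_n H_n=G$ because $S_G$ covers $G$. Since $G$ is unbounded this chain cannot stabilize, so after passing to a subsequence I may assume $H_{n-1}\subsetneq H_n$ for all $n$ (with $H_0:=\emptyset$). Because the $H_n$ are cofinal, to check that a set is an eigenset it is enough to understand its behaviour under left multiplication by the subgroups $H_n$.

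The heart of the argument is the \emph{level function} $\ell(g):=\min\{n:g\in H_n\}$, with level sets $L_n:=H_n\setminus H_{n-1}$; each $L_n$ is bounded, and infinitely many $L_n$ are nonempty since $\bigcup_n H_n=G$ is unbounded. The key observation I would prove is that the level is preserved under left translation at high levels: if $h\in H_j$ and $\ell(g)>j$, then $\ell(hg)=\ell(g)$, because for $n\ge j$ membership $g\in H_n$ is unchanged upon multiplying by $h\in H_j\subseteq H_n$. From this it follows that for any index set $S$ the \emph{level-selected set} $A_S:=\{g:\ell(g)\in S\}=\bigcup_{n\in S}L_n$ is an eigenset: for each $j$ and each nonempty $B\subseteq H_j$, the sets $B\cdot A_S$ and $A_S$ agree on $\{g:\ell(g)>j\}$, so $B\cdot A_S\,\Delta\,A_S\subseteq H_j$ is bounded; and since $G\setminus A_S=A_{\mathbb{N}\setminus S}$ has the same form, the complementary condition holds as well. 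Moreover $A_S$ is unbounded precisely when $S$ meets infinitely many active levels. I would then split the (infinite) set of active levels into infinitely many infinite blocks $S_1,S_2,\dots$; the sets $A_{S_1},A_{S_2},\dots$ are unbounded eigensets, pairwise disjoint because $\ell(g)$ takes a single value, and the reduction of the first paragraph finishes the proof.

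The step I expect to be the main obstacle is the verification that each $A_S$ is a genuine eigenset. The delicate point is that the bounded subgroups $H_j$ need not be finite, so one cannot simply take a union over $h\in H_j$ of the boundedly many sets $h\cdot A_S\,\Delta\,A_S$; instead one must argue directly that $H_j\cdot A_S$ and $A_S$ coincide on the region $\{\ell>j\}$, which is exactly what the level-preservation lemma delivers. Establishing the chain $H_n$ and the passage to a strictly increasing cofinal subsequence is routine, as is the final disjointness-to-distinct-ends reduction.
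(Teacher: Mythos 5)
The paper offers no proof of this Proposition to compare against: it is quoted from \cite{MRD}, which is listed as ``in preparation.'' Judged on its own terms, your argument is correct and complete. The chain $H_n:=[B_1\cup\dots\cup B_n]$ of bounded subgroups (bounded by local boundedness, cofinal in $S_G$ by the countable basis, with union $G$ since the scale covers $G$), the level function $\ell$, and the level-preservation lemma ($h\in H_j$ and $\ell(g)>j$ imply $\ell(hg)=\ell(g)$, since for $m\ge j$ one has $hg\in H_m\iff g\in H_m$) are all sound; they do yield $B\cdot A_S\,\Delta\,A_S\subset H_j$ for every nonempty $B\subset H_j$, hence for every nonempty $B\in S_G$ by cofinality, and $G\setminus A_S=A_{\mathbb{N}\setminus S}$ has the same form, so each $A_S$ is a genuine eigenset of $S_G\cdot$. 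Splitting $\mathbb{N}$ into infinitely many infinite blocks gives infinitely many pairwise disjoint unbounded eigensets, and your reduction (a single unbounded eigenset extends to an end by Zorn, and two disjoint ones cannot share an end since their intersection is bounded) is exactly right. Two minor remarks: you correctly restrict to $B\ne\emptyset$, which matters because $lm_\emptyset$ is not an $S_G$-linear operator; and the opening appeal to \ref{CClopenEqualEigenset} to say ``the end spaces agree'' is slightly loose (that proposition identifies the ends of $(G,S_G,S_G\cdot)$ with the coarse ends, while the inversion $A\mapsto A^{-1}$ handles the left/right discrepancy), but since you exhibit infinitely many ends of $(G,S_G,S_G\cdot)$ directly, nothing in the proof depends on it.
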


\begin{Definition}
A scaled group $(G,S_G)$ is of \textbf{bounded geometry} if there is a  bounded set $K$ such that for every bounded subset $B$ of $G$ there are elements $g_i\in G$, $i\leq k$, so that $B\subset \bigcup\limits_{i=1}^k g_i\cdot K$.
\end{Definition}

\begin{Theorem}\label{SpecialStallingsTwoEnds}\cite{MRD}
Let $(G,S_G)$ be a boundedly generated scaled group of bounded geometry. If $(G,S_G,S_G\cdot)$ has two ends, then $G$ contains an unbounded cyclic subgroup of bounded index.
\end{Theorem}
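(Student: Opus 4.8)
The plan is to translate the hypothesis into coarse geometry and then run the classical Hopf--Freudenthal argument in the scaled setting. First I would invoke Proposition \ref{CClopenEqualEigenset}: the end space of $(G,S_G,S_G\cdot)$ coincides with the end space of the coarse space $(G,\mathcal{LSS})$, where $\mathcal{LSS}$ is generated by the covers $C_B=\{B\cdot g\}_{g\in G}$, $B\in S_G$. Bounded generation lets me fix a single bounded symmetric $B$ with $e\in B$ whose translates $C_B$ already generate $\mathcal{LSS}$, so that $(G,\mathcal{LSS})$ is coarsely geodesic; bounded geometry guarantees that every bounded set meets only boundedly many $B$-translates, making $(G,\mathcal{LSS})$ a proper, uniformly discrete coarse space on which the theory of ends behaves like that of a locally finite Cayley graph. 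In this language the hypothesis says $Ends(G,\mathcal{LSS})$ is a two-point space $\{e_+,e_-\}$.

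Second, I would choose a bounded separating set: by two-endedness there is $B_0\in S_G$ such that $G\setminus B_0$ has exactly two unbounded coarse components $C_+,C_-$, one converging to each end, while the union of the remaining components is bounded (this is the Freudenthal picture, available here because the Boolean algebra of coarsely clopen sets has a two-point Stone space). Left multiplication by $G$ permutes the two ends, giving a homomorphism $\phi\colon G\to\mathbb{Z}/2$; its kernel $G^+$ fixes each end and has index $\le 2$, hence bounded index. Since a cyclic subgroup of bounded index in $G^+$ is of bounded index in $G$, I may replace $G$ by $G^+$ and assume every element preserves $C_+$ and $C_-$ coarsely.

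Third, and this is where the real work lies, I would build a coarse displacement homomorphism. Fixing a sequence $x_0,x_1,\dots$ marching to $e_+$, I would define for $g\in G^+$ a signed translation number $\tau(g)$ measuring how far $g\cdot(-)$ shifts this sequence along the coarse line $C_-\cup B_0\cup C_+$. The crucial structural fact to establish is that the two-ended bounded-geometry space $(G,\mathcal{LSS})$ is coarsely equivalent to $\mathbb{Z}$, so that $\tau$ is well defined up to bounded error and is genuinely additive, $\tau(gh)=\tau(g)+\tau(h)$. Properness of the left action then forces $\ker\tau$ to be bounded (an element of zero displacement fixes a coarse basepoint up to bounded error, and properness bounds the set of such elements), while unboundedness of $G^+$ forces the image to be an unbounded subgroup of $\mathbb{Z}$, hence infinite cyclic.

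Finally I would pick $t\in G^+$ with $\tau(t)$ generating $\tau(G^+)\cong\mathbb{Z}$. Then $\langle t\rangle$ is unbounded and surjects onto $\tau(G^+)$, so $G^+=\langle t\rangle\cdot\ker\tau$ with $\ker\tau$ bounded; thus $\langle t\rangle$ has bounded index in $G^+$, and therefore in $G$. The main obstacle is the third step: proving that a two-ended space of bounded geometry is coarsely a line and that the resulting displacement is an honest homomorphism with bounded kernel. I expect to handle this by showing each of $C_+,C_-$ is coarsely a ray --- every point lies within bounded distance of the marching sequence, using bounded geometry to rule out ``fat'' ends --- and then verifying additivity of $\tau$ directly from the coarse-geodesic structure, with the \v Svarc--Milnor Theorem \ref{Svarc-MilnorForScaledGroups} available as an alternative route to the coarse equivalence $G\simeq\mathbb{Z}$ once the $\mathbb{Z}$-action is in hand.
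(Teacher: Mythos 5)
First, a point of reference: the paper itself gives no proof of this theorem --- it is imported from \cite{MRD} (``in preparation''), so there is no in-paper argument to compare yours against. Your outline is the classical Hopf--Stallings--Scott--Wall argument transported to the scaled setting, which is surely the intended route; the issue is that two of your steps do not work as stated. The most serious is the third step. The displacement function $\tau$ you define by measuring how far an element shifts a marching sequence along the coarse line is only a quasimorphism: even granting that $(G,\mathcal{LSS})$ is coarsely equivalent to $\mathbb{Z}$, the identity $\tau(gh)=\tau(g)+\tau(h)$ holds only up to a uniformly bounded error, so ``the image is an unbounded subgroup of $\mathbb{Z}$, pick a generator $t$'' is not available, and $\ker\tau$ is not a subgroup. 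The standard repair is to abandon the homomorphism entirely: fix an eigenset $A$ with $A$ and $G\setminus A$ unbounded (two-endedness forces every eigenset to be equivalent mod $S_G$ to one of $\emptyset, A, G\setminus A, G$), use unboundedness together with bounded geometry to produce a single element $t$ whose translation of $A$ is ``deep'' (so that $t\cdot A\subset A$ properly and the inclusions $t^{n+1}A\subset t^nA$ are uniformly separated), conclude $t$ generates an unbounded cyclic subgroup, and then obtain bounded index directly from $G=\bigcup_{n\in\ZZ}t^n(A\setminus tA)\cup(\text{bounded})$ with $A\setminus tA$ bounded. This bypasses both the additivity problem and the need to first prove $G$ is coarsely a line.

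A second, smaller error: for the scaled linear space $(G,S_G,S_G\cdot)$ the eigensets satisfy $B\cdot A\equiv A$ for all bounded $B$, so \emph{left} translation acts trivially on the ends (indeed $g\cdot A\equiv A$ for every $g$, since singletons are bounded); it is \emph{right} multiplication that permutes the two ends and yields the index-two subgroup and the translating element. The argument survives with left and right swapped throughout, but as written the homomorphism to $\mathbb{Z}/2$ and the ``displacement'' are attached to the wrong action. Finally, your reduction of bounded generation and bounded geometry to ``proper, uniformly discrete, locally finite Cayley-graph-like'' is plausible but is itself a nontrivial claim in this generality; in the repaired argument you only need the weaker facts that complements of bounded sets have boundedly many unbounded coarse components and that $A\setminus t\cdot A$ is bounded, both of which follow more directly from the hypotheses.
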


\begin{Theorem}
Let $(G,S_G)$ be a $\sigma$-bounded scaled group of bounded geometry. If $(G,S_G,S_G\cdot)$ has two ends, then $G$ is boundedly generated.
\end{Theorem}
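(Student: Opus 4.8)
The plan is to translate the hypothesis into the coarse language of Proposition~\ref{CClopenEqualEigenset}. By that result the end space of $(G,S_G,S_G\cdot)$ agrees with the end space of the large scale structure $\mathcal{LSS}$ generated by the covers $C_B=\{B\cdot g\}_{g\in G}$, so ``$(G,S_G,S_G\cdot)$ has two ends'' means precisely that the coarse space $(G,\mathcal{LSS})$ has a two-point space of ends. Here two points $x,y$ are close exactly when $y\cdot x^{-1}$ is bounded, so $\mathcal{LSS}$ is invariant under right translation and a single ``bounded step'' from $x$ to $y$ records the bounded element $y\cdot x^{-1}$. My goal is to produce one bounded set $B$ with $\langle B\rangle=G$; I will obtain it from a coarsely dense bi-infinite chain with uniformly bounded steps, whose existence is the entire content of the theorem.

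First I would isolate the purely algebraic endgame, which uses no geometry. Suppose we are given a sequence $(x_n)_{n\in\mathbb{Z}}$ in $G$, a bounded set $S$ with $s_n:=x_{n+1}\cdot x_n^{-1}\in S$ for all $n$, and a bounded set $D$ with $G=\bigcup_{n\in\mathbb{Z}}D\cdot x_n$ (coarse density in the right-invariant metric). Put $B:=S\cup S^{-1}\cup D\cup\{x_0,x_0^{-1}\}$, a finite union of bounded sets, hence bounded. For $n\ge 1$ one has $x_n=s_{n-1}\cdots s_0\cdot x_0$ and for $n\le -1$ one has $x_n=s_n^{-1}\cdots s_{-1}^{-1}\cdot x_0$, so every $x_n\in\langle B\rangle$. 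Any $g\in G$ may be written $g=d\cdot x_n$ with $d\in D\subset B$, whence $g\in\langle B\rangle$. Thus $G=\langle B\rangle$. Moreover, once the coarse equivalence with $\mathbb{Z}$ below is in hand, any bounded set maps to a bounded interval of indices, so its elements have uniformly bounded $B$-word length; hence $S_G$ is the word scale of $B$ and $G$ is boundedly generated as a scaled group.

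Next I would construct the chain. Bounded geometry makes $(G,\mathcal{LSS})$ uniformly locally finite, which I would use to verify that it is a Freundenthal scaled space: only finitely many unbounded coarse components of $G\setminus B$ can meet the boundary of a bounded $B$, and the bounded components assemble to a bounded set. Two-endedness then yields, via Proposition~\ref{FreundenthalEndsCorollary}, a symmetric bounded set $B_0$ with $e\in B_0$ for which $G\setminus B_0$ has exactly two unbounded coarse components $C_+$ and $C_-$. Using $\sigma$-boundedness, fix an increasing exhaustion $G=\bigcup_n B_n$ by bounded sets. Inside $C_+$ I would march outward: starting from $x_0$, repeatedly pick escaping points outside $B_{k}$ and join consecutive chosen points by chains in a single fixed uniformly bounded cover $\mathcal{U}$, so that each step $x_{k+1}\cdot x_k^{-1}$ lies in one bounded set depending only on the mesh of $\mathcal{U}$; coarse connectivity and unboundedness of $C_+$ make this possible. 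Performing the mirror construction in $C_-$ and re-indexing gives the bi-infinite sequence $(x_n)_{n\in\mathbb{Z}}$ with uniformly bounded steps.

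The hard part is the coarse density $G=\bigcup_n D\cdot x_n$, and this is exactly where bounded geometry is indispensable; two-endedness alone does not suffice, since a ``thick'' two-ended space need not be boundedly generated. The decisive fact is that a uniformly locally finite two-ended coarse space is coarsely equivalent to $\mathbb{Z}$: for a suitable exhaustion the annuli $B_{n+1}\setminus B_{n-1}$ have uniformly bounded coarse diameter transverse to the two ends. I would prove this by a counting argument with the fixed bounded set $K$ of the bounded-geometry hypothesis: if some $g$ lay at distance greater than $R$ from every $x_n$ for arbitrarily large $R$, then between two consecutive levels one could exhibit more and more pairwise far translates of $K$ inside a single annulus, contradicting that each annulus is covered by boundedly many translates of $K$. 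The uniform transverse bound $R$ then lets me take $D$ to be the $R$-ball of $e$, giving $G=\bigcup_n D\cdot x_n$ and closing the argument through the algebraic endgame. This is also the step that supplies the standing hypothesis of Theorem~\ref{SpecialStallingsTwoEnds}, with which the present result dovetails.
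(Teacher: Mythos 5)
The paper itself contains no proof of this theorem: it is one of the results the authors merely ``rephrase from \cite{MRD}'' (a reference listed as in preparation), so there is no internal argument to compare yours against. Judged on its own terms, your outline has a genuine gap, and it sits exactly where the whole difficulty of the statement lives.

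The gap is the production of the bi-infinite chain $(x_n)_{n\in\mathbb{Z}}$ with steps $x_{n+1}\cdot x_n^{-1}$ confined to a single bounded set $S$, together with its coarse density. First, to obtain uniformly bounded steps you join consecutive chosen points ``by chains in a single fixed uniformly bounded cover $\mathcal{U}$''; but the assertion that one fixed cover suffices to chain through all of $C_+$ is essentially the assertion that $G$ is boundedly generated. In a group that is not boundedly generated no single uniformly bounded cover connects the space, and the coarse components of $G\setminus B_0$ are only defined via chains at arbitrarily large scales, so this step is circular. Second, the density claim rests on ``a uniformly locally finite two-ended coarse space is coarsely equivalent to $\mathbb{Z}$,'' which is false for coarse spaces in general: the subspace $\{(x,y)\in\mathbb{Z}^2:\ |y|\le|x|\}$ of the integer lattice has bounded geometry and exactly two ends but quadratic growth. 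Any correct proof must therefore exploit the homogeneity of $G$ (its transitive action on itself by translations), which your sketch never invokes. Third, the counting argument misreads the definition of bounded geometry: the number $k$ of translates of $K$ needed to cover a bounded set is allowed to depend on that set, so ``each annulus is covered by boundedly many translates of $K$'' carries no bound uniform over the annuli, and the intended contradiction does not materialize. Your algebraic endgame (from a coarsely dense chain with bounded steps to a bounded generating set) is correct, but everything feeding into it is unsupported. A more promising route is by contraposition: write $G=\bigcup_n\langle B_n\rangle$ for an exhausting sequence of bounded sets and show that if every subgroup $\langle B_n\rangle$ is proper then $G$ has either one end or infinitely many ends (the case where all these subgroups are bounded is already Proposition \ref{LocallyBoundedGroupsCase}), contradicting two-endedness.
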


\section{Ends of geodesic spaces}\label{EndsOfGeoSpaces}
Given a geodesic space $X$ and $x_0\in X$ the most natural operation on a subset $A$ of $X$ is its \textbf{cone} $Cone(A,x_0)$ defined as the union of all geodesics $[x_0,x]$ joining $x_0$ and $x\in A$. There are two ways to mix it up: one is first to take a ball around $A$, then the cone. The other is to take the ball of the cone. We will show that the two processes yield the same end spaces.

\begin{Definition}\label{DefGeodesicOperators}
One family of operators, $\mathcal{F}_{cb}$ consists of $cb_r:2^X\to 2^X$, $r > 0$, defined as
$cb_r(A):=Cone(B(A,r),x_0)$.\\
The other family of operators $\mathcal{F}_{bc}$ consists of $bc_r:2^X\to 2^X$, $r > 0$, defined as
$bc_r(A):=B(Cone(A,x_0),r)$.

To show that they have the same families of eigensets with respect to
$S_X$ defined as all open bounded subsets of $X$, we introduce a combination of the above families: $g_r(A):=B(cb_r(A),r)$.
\end{Definition}

\begin{Observation}
As $A\subset  cb_r(A)\cap bc_r(A)\subset cb_r(A)\cup bc_r(A)\subset
g_r(A)$ for any $A\subset X$, any eigenset of the family $\mathcal{F}=\{g_r\}_{r > 0}$ is an eigenset
of the other two families. Our goal is to show all three families have the same eigensets and they do not depend on the basepoint $x_0$.
\end{Observation}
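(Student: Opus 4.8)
The plan is to justify the sandwich inclusions displayed in the statement and then transport the eigenset property across them using nothing more than the fact that a subset of a bounded set is bounded. Everything reduces to monotonicity of the three building blocks: $A\mapsto Cone(A,x_0)$ is monotone because enlarging the target set only adds geodesics $[x_0,x]$, the metric neighbourhood $Y\mapsto B(Y,r)$ is monotone, and $A\subset B(A,r)$, $Y\subset B(Y,r)$ always hold.

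First I would verify the chain $A\subset cb_r(A)\cap bc_r(A)\subset cb_r(A)\cup bc_r(A)\subset g_r(A)$ termwise. We have $A\subset Cone(A,x_0)\subset Cone(B(A,r),x_0)=cb_r(A)$, giving $A\subset cb_r(A)$; likewise $A\subset Cone(A,x_0)\subset B(Cone(A,x_0),r)=bc_r(A)$, giving $A\subset bc_r(A)$; the inclusion $cb_r(A)\subset B(cb_r(A),r)=g_r(A)$ is immediate; and $bc_r(A)=B(Cone(A,x_0),r)\subset B(cb_r(A),r)=g_r(A)$ because $Cone(A,x_0)\subset cb_r(A)$. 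Thus both $cb_r(A)$ and $bc_r(A)$ are wedged between $A$ and $g_r(A)$.

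Next I would compare symmetric differences, recalling that for the scale $S_X$ of bounded sets the relation $C\equiv D$ mod $S_X$ is equivalent to $C\Delta D$ being bounded. Suppose $A$ is an eigenset of $\mathcal{F}=\{g_r\}_{r>0}$ and fix $r>0$. Then $g_r(A)\equiv A$ mod $S_X$, and since $A\subset g_r(A)$ this says precisely that $g_r(A)\setminus A$ is bounded. From $A\subset cb_r(A)\subset g_r(A)$ and $A\subset bc_r(A)\subset g_r(A)$ we get $cb_r(A)\setminus A\subset g_r(A)\setminus A$ and $bc_r(A)\setminus A\subset g_r(A)\setminus A$, so both are subsets of a bounded set, hence bounded; equivalently $cb_r(A)\equiv A$ and $bc_r(A)\equiv A$ mod $S_X$.

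Finally I would run the identical argument on the complement: since $A$ is an eigenset of $\{g_r\}$ we also have $g_r(X\setminus A)\equiv X\setminus A$ mod $S_X$ for every $r$, and applying the displayed chain with $X\setminus A$ in place of $A$ yields $cb_r(X\setminus A)\equiv X\setminus A$ and $bc_r(X\setminus A)\equiv X\setminus A$ mod $S_X$. Combined with the previous step this shows $A$ is an eigenset of both $\{cb_r\}_{r>0}$ and $\{bc_r\}_{r>0}$, as claimed. I expect no genuine obstacle in this direction, since the work is carried entirely by the inclusions and the monotonicity of boundedness. The hard part lies instead in the \emph{converse} promised by the stated goal — showing every eigenset of $\mathcal{F}_{cb}$ or $\mathcal{F}_{bc}$ is already an eigenset of $\{g_r\}$, and that the resulting Boolean algebra is independent of $x_0$ — which will require the geodesic geometry to bound the ball-of-cone-of-ball $g_r(A)$ inside a single $cb_s$ or $bc_s$ up to a bounded set, rather than the purely formal monotonicity used here.
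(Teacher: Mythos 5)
Your argument is correct and matches the paper's (implicit) reasoning exactly: the Observation is stated without proof precisely because, as you show, the inclusions $A\subset cb_r(A),bc_r(A)\subset g_r(A)$ sandwich $cb_r(A)\setminus A$ and $bc_r(A)\setminus A$ inside the bounded set $g_r(A)\setminus A$, and the same applies to $X\setminus A$. You are also right that the substantive content (the converse and basepoint independence) is deferred; the paper handles it in the subsequent propositions on unions of components of $X\setminus K$.
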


\begin{Definition}
Given a pointed geodesic space $(X,x_0)$, its \textbf{geodesic eigenset}
is a subset $A$ such that $A$ (and therefore $X\setminus A$ as well) is an eigenset of $(X,S_X,\mathcal{F})$, where $S_X$ consists of all open bounded subsets of $X$ and $\mathcal{F}=\{g_r\}_{r > 0}$ is as in \ref{DefGeodesicOperators}. 
\end{Definition}

\begin{Definition}
Given a pointed geodesic space $(X,x_0)$, its \textbf{geodesic ends}
are defined as the ends of
$(X,S_X,\mathcal{F})$, where $S_X$ consists of all open bounded subsets of $X$ and $\mathcal{F}=\{g_r\}_{r > 0}$. 
\end{Definition}

\begin{Proposition}
If $X$ is a geodesic space, then for any bounded subset $K$ of $X$
any union $C$ of components of $X\setminus K$ is a geodesic eigenset of $X$.
\end{Proposition}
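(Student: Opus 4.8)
The plan is to reduce everything to a single boundedness estimate and then prove that estimate from the defining property of geodesics. First I recall that $g_r(A)=B(Cone(B(A,r),x_0),r)$ always contains $A$, so $g_r(C)\equiv C$ mod $S_X$ is equivalent to $g_r(C)\setminus C$ being bounded; moreover each $g_r$ is an $S_X$-linear operator, since $Cone(\cdot,x_0)$ and $B(\cdot,r)$ both commute with unions and send bounded sets to bounded sets (a routine check). Because $X\setminus C=C'\cup K$, where $C'$ is the union of the components of $X\setminus K$ not contained in $C$ and $K$ is bounded, the $S_X$-linearity of $g_r$ gives $g_r(X\setminus C)\equiv g_r(C')$ and $X\setminus C\equiv C'$. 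Hence it suffices to prove the single statement that $g_r(D)\setminus D$ is bounded for \emph{every} union $D$ of components of $X\setminus K$ and every $r>0$, and then apply it to both $D=C$ and $D=C'$. I fix such a $D$ and $r$, and fix $R_0$ with $K\cup\{x_0\}\subset B(x_0,R_0)$.

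Second, I would isolate the elementary geodesic lemma that drives the argument: \emph{if $p\notin D$ and $d(p,D)<\varepsilon$, then $d(p,K)<\varepsilon$}. Indeed, if $p\notin K$ then $p$ lies in a component of $X\setminus K$ other than those of $D$, and a geodesic from $p$ to a point of $D$ at distance $<\varepsilon$ is a connected set joining two distinct components of $X\setminus K$, so it must meet $K$; the meeting point is within $\varepsilon$ of $p$. Thus every point lying outside $D$ but within $\varepsilon$ of $D$ already lies in $B(K,\varepsilon)$, a bounded set.

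The heart of the proof is the claim that every point $w$ on a geodesic $\gamma\colon[0,L]\to X$ from $x_0$ to a point $y$ with $d(y,D)\le r$ satisfies $d\big(w,\,D\cup K\cup\{x_0\}\big)\le 3r+2R_0$ (up to arbitrarily small slack). To see this I look at the maximal subinterval $(a,b)\ni s_0$, where $w=\gamma(s_0)$, on which $d(\gamma(t),D\cup K\cup\{x_0\})>r$; if no such interval exists, then $w$ is already within $r$ of $D\cup K\cup\{x_0\}$. On $(a,b)$ the curve stays at distance $>r$ from $K$, hence lies in a single component of $X\setminus K$, and at distance $>r$ from $D$, so that component is not one of $D$'s. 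A limiting argument shows each endpoint $\gamma(a),\gamma(b)$ lies outside $D$ yet within $r$ of $D\cup K\cup\{x_0\}$, so by the lemma of the previous paragraph each endpoint lies within $r$ (plus slack) of $K\cup\{x_0\}$. Now comes the decisive point: $\gamma|_{[a,b]}$ is itself a geodesic, so $b-a=d(\gamma(a),\gamma(b))\le 2r+\diam(K\cup\{x_0\})\le 2r+2R_0$; consequently $d(w,\gamma(a))\le b-a$ is bounded and therefore $d(w,K\cup\{x_0\})\le 3r+2R_0$.

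Finally, I would assemble the estimate. Any $z\in g_r(D)\setminus D$ lies within $r$ of some $w$ on a geodesic $[x_0,y]$ with $d(y,D)\le r$, so by the claim $d(z,\,D\cup K\cup\{x_0\})\le 4r+2R_0$. If this distance is realized by $K$ or by $\{x_0\}$, then $z\in B(K\cup\{x_0\},4r+2R_0)$; if it is realized by $D$, then since $z\notin D$ the geodesic lemma forces $z\in B(K,4r+2R_0+\text{slack})$. Either way $z$ lies in a fixed bounded set, so $g_r(D)\setminus D$ is bounded, which completes the proof. The one genuinely delicate step is the heart claim, and within it the observation that the geodesic property bounds the length $b-a$ of an excursion into a wrong component; without the distance-realizing property of geodesics such an excursion could in principle be arbitrarily long, which is exactly what would break the eigenset property.
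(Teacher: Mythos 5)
Your proof is correct, and it reaches the boundedness of $g_r(C)\setminus C$ by a genuinely different mechanism than the paper, even though the cast of characters (a point $z$ within $r$ of some $w$ on a geodesic $[x_0,y]$ with $y\in B(C,r)$, plus the fact that a connected set missing $K$ stays in one component of $X\setminus K$) is the same. The paper exhibits the explicit bounded set $B(Cone(B(K,2r),x_0),2r)$ and shows that any point of $g_r(C)$ outside it actually \emph{lies in} $C$, by chasing it back through a chain of geodesic segments (from $x_1$ to the cone point $x_2$, along the tail $[x_2,y]$, and from $y$ to a point of $C$), each forced to avoid $K$; the key observation there is that a point of a geodesic $[x_0,y]$ whose tail meets $B(K,2r)$ already lies in $Cone(B(K,2r),x_0)$, so no length estimate is ever needed. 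You instead prove a quantitative claim about the whole cone --- every point of $Cone(B(D,r),x_0)$ is within $3r+2R_0$ of $D\cup K\cup\{x_0\}$ --- by bounding the parameter length of an excursion of the geodesic into a wrong component: its endpoints are forced near $K\cup\{x_0\}$, and the distance-realizing property of geodesics caps $b-a$. Combined with your lemma that a point off $D$ but near $D$ must be near $K$, this traps $g_r(D)\setminus D$ in a metric ball around $K\cup\{x_0\}$. Your route makes the role of the distance-minimizing property of geodesics completely explicit and isolates a reusable estimate; the paper's route is shorter and avoids all length bookkeeping by exploiting the cone structure directly. Both proofs dispose of the complement in the same way, via the observation that $X\setminus C$ is, mod $S_X$, again a union of components of $X\setminus K$.
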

\begin{proof}
Without loss of generality, we may assume that $C$ is unbounded. Given $r > 0$ suppose $x_1\in g_r(C)\setminus B(Cone(B(K,2r),x_0),2r)$. Choose $x_2\in Cone(B(C,r),x_0)$ at the distance from $x_1$ less than $r$. 
There is a geodesic $[x_0,y]$ containing $x_2$, where $y\in B(C,r)$.
Choose $z\in C$, $d(y,z) < r$. Notice that implies $y\in C$ as any geodesic
$[y,z]$ is outside of $K$. Indeed, if $t\in [y,z]\cap K$,
then $y\in B(K,r)$ and $x_2\in Cone(B(K,r),x_0)$
resulting in $x_1\in B(Cone(B(K,r),x_0),r)$, a contradiction.

Now, the sub-geodesic $[x_2,y]$ of $[x_0,y]$
is outside of $B(K,2r)$ as well (otherwise $x_1\in B(Cone(K,x_0),2r)$), so it is contained in $C$. Hence, any geodesic $[x_1,x_2]$ misses $K$,
so we can conclude $x_1\in C$.

Finally, if $B:=B(Cone(B(K,2r),x_0),2r)$, then
$C\setminus B\subset g_r(C)\setminus B\subset C$ which implies
$C\equiv g_r(C)$ mod $S_X$.

Since $(X\setminus K)\setminus C$ is also a union of some components of $X\setminus K$, $C$ is an eigenset of $(X,S_X,\mathcal{F})$.
\end{proof}

\begin{Proposition}\label{GeoEigensetIsEquiToUnionOComponents}
If $X$ is a geodesic space and $A\equiv cb_1(A)$ or $A\equiv bc_1(A)$ mod $S_X$,
then there is a bounded closed subset $K$ of $X$ such that $A$ is equivalent mod $S_X$ to
some union of components of $X\setminus K$.
\end{Proposition}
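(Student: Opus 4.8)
The plan is to extract from the hypothesis a purely local statement about how membership in $A$ propagates, and then to use local connectedness of geodesic spaces to promote that local statement to an ``all-or-nothing'' dichotomy on each component of a suitable $X\setminus K$.

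First I would reduce both hypotheses to a single local fact. Since $A\subset B(A,1)\subset cb_1(A)$ and $A\subset Cone(A,x_0)\subset bc_1(A)$, in either case $A$ is contained in the value of the operator, so $A\equiv cb_1(A)$ (respectively $A\equiv bc_1(A)$) mod $S_X$ forces the difference $cb_1(A)\setminus A$ (respectively $bc_1(A)\setminus A$) to be bounded. I would fix an open bounded $B_0\in S_X$ containing this difference. The key observation is then the following: \textbf{if $a\in A$, $d(a,b)<1$, and $b\notin B_0$, then $b\in A$.} Indeed $d(b,A)\le d(b,a)<1$ gives $b\in B(A,1)$ (respectively $b\in B(Cone(A,x_0),1)$, using $a\in Cone(A,x_0)$), hence $b\in cb_1(A)$ (respectively $b\in bc_1(A)$); since $b\notin B_0$ we conclude $b\in A$. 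This one local fact handles both cases identically.

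Next I would choose the bounded closed set. Pick $R$ with $B_0\subset B(x_0,R)$ and set $K:=cl_X(B(x_0,R))$, which is bounded, closed, and satisfies $B_0\subset K$, so that $X\setminus K$ is open and disjoint from $B_0$. I would record that a geodesic space is locally path-connected, hence locally connected: every ball $B(p,\varepsilon)$ is path-connected, because for $q\in B(p,\varepsilon)$ each geodesic $[q,p]$ stays in $B(p,\varepsilon)$ (a point $m\in[q,p]$ satisfies $d(m,p)\le d(q,p)<\varepsilon$), so any two points of the ball may be joined through $p$. Consequently the components of the open set $X\setminus K$ are themselves open and connected. Then for each such component $C$ I would show $A\cap C$ is clopen in $C$, whence $A\cap C\in\{\emptyset,C\}$. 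For openness: if $x\in A\cap C$, choose $\varepsilon<1$ with $B(x,\varepsilon)\subset C$; every $y\in B(x,\varepsilon)$ has $d(x,y)<1$ and $y\notin B_0$, so the local fact gives $y\in A$. For closedness: if $x\in C\setminus A$ and some $y\in A$ had $d(x,y)<1$, then the local fact applied with $a=y$, $b=x$ (using $x\notin B_0$) would force $x\in A$, a contradiction, so a whole ball about $x$ misses $A$. Therefore $A\setminus K$ is precisely the union of those components of $X\setminus K$ that are contained in $A$, and since $A\cap K\subset K$ is bounded we obtain $A\equiv A\setminus K$ mod $S_X$, as required.

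The main obstacle is isolating the correct local fact and checking it is symmetric enough to drive both the openness and the closedness halves of the clopen argument; once that is in hand, the only supporting point requiring care is that geodesic spaces are locally connected, so that the components of $X\setminus K$ are open and the step of fitting a small ball inside a component is legitimate.
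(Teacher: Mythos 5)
Your proof is correct and follows essentially the same route as the paper's: both isolate the same local propagation fact (a point within distance $1$ of a point of $A$ and outside a bounded set containing $cb_1(A)\setminus A$ or $bc_1(A)\setminus A$ must lie in $A$) and both take $K$ to be a closed ball around $x_0$ containing that bounded set. The only difference is in the finishing step --- the paper propagates membership along a piece-wise geodesic path inside a component by induction, while you show $A\cap C$ is clopen in the open connected component $C$ --- and your variant is, if anything, slightly more carefully justified.
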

\begin{proof}
Without loss of generality, we may assume that $A$ is unbounded. Pick $m > 0$ such that $cb_1(A)\setminus B(x_0,m)=A\setminus B(x_0,m)$
or  $bc_1(A)\setminus B(x_0,m)=A\setminus B(x_0,m)$.
Suppose $x\in A$ belongs to a component $C$ of $X\setminus cl(B(x_0,m+1))$ and assume $y\in C$. Choose a piece-wise geodesic path $P$ from $x$ to $y$ in $C$.
If $z\in P$ and $d(x,z) < 1$, then $z\in cb_1(A)$ (or $z\in bc_1(A)$), so $z\in A$
as $z\notin cl(B(x_0,m+1))$. By induction on finitely many elements of $P$, $y\in A$.
That means $A$ contains all the components of $X\setminus cl(B(x_0,m+1))$ that intersect $A$. In particular, $A$ is equivalent to the union of such components.
\end{proof}

\begin{Corollary}
If $X$ is a geodesic space, then all three families from \ref{DefGeodesicOperators} have the same eigensets, namely unions of some components of $X\setminus K$, $K$ being bounded. In particular, those eigensets do not depend on the basepoint in $X$.
\end{Corollary}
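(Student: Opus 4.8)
The plan is to prove the Corollary by a simple sandwich (squeeze) argument that glues together the three results already established in this subsection. Write $\mathcal{C}$ for the family of subsets of $X$ that are equivalent mod $S_X$ to a union of components of $X\setminus K$ for some bounded $K$, and let $E_{cb}$, $E_{bc}$, $E_g$ denote the families of eigensets of $\mathcal{F}_{cb}=\{cb_r\}$, $\mathcal{F}_{bc}=\{bc_r\}$, and $\mathcal{F}=\{g_r\}$ respectively. My goal is to show $E_g\subset E_{cb}\subset\mathcal{C}\subset E_g$ and $E_g\subset E_{bc}\subset\mathcal{C}\subset E_g$, which collapses all four families into a single one.

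First I would record the routine fact that being an eigenset of a family of $S_X$-linear operators is invariant under $\equiv$ mod $S_X$. If $A\equiv C$ mod $S_X$, then $A\setminus C$ and $C\setminus A$ are bounded, so for any $S_X$-linear $f$ we get $f(A)\equiv f(A\cap C)\equiv f(C)$ using $S_X$-linearity and the fact that $f$ sends bounded sets to bounded sets; combined with the early Lemma that $X\setminus A\equiv X\setminus C$, this shows that if $C$ is an eigenset then so is $A$. This small lemma is what lets me pass freely between a set and its equivalence class throughout.

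Next I would assemble the three inclusions. (i) The Proposition asserting that any union of components of $X\setminus K$ is a geodesic eigenset gives directly that such unions lie in $E_g$, and the invariance lemma upgrades this to $\mathcal{C}\subset E_g$. (ii) The Observation that $A\subset cb_r(A)\cap bc_r(A)\subset cb_r(A)\cup bc_r(A)\subset g_r(A)$, together with the elementary squeeze that $A\subset P\subset Q$ and $Q\equiv A$ force $P\equiv A$ (apply it to $A$ and to $X\setminus A$), yields $E_g\subset E_{cb}$ and $E_g\subset E_{bc}$. (iii) If $A\in E_{cb}$ then in particular $cb_1(A)\equiv A$ mod $S_X$, so Proposition \ref{GeoEigensetIsEquiToUnionOComponents} places $A$ in $\mathcal{C}$; running the identical argument with $bc_1$ gives $E_{bc}\subset\mathcal{C}$. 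Chaining $\mathcal{C}\subset E_g\subset E_{cb}\subset\mathcal{C}$ and $\mathcal{C}\subset E_g\subset E_{bc}\subset\mathcal{C}$ forces $E_g=E_{cb}=E_{bc}=\mathcal{C}$, which is exactly the first assertion.

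Finally, basepoint-independence is immediate once the equality is in hand: the family $\mathcal{C}$ is described purely in terms of the bounded sets of $X$ and the components of their complements, with no reference to $x_0$, whereas the three operator families are built from $Cone(\cdot,x_0)$; since the eigensets coincide with $\mathcal{C}$ for every choice of $x_0$, they cannot depend on $x_0$. I do not expect a genuine obstacle here, since the two properly geometric inputs are precisely the preceding Propositions; the only place where a careless argument could slip is the $\equiv$-invariance of the eigenset property, so I would state and invoke that lemma explicitly rather than treat it as obvious.
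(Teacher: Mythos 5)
Your proposal is correct and follows essentially the route the paper intends: the Corollary is left without an explicit proof precisely because it is the concatenation of the preceding Observation (the sandwich $A\subset cb_r(A)\cap bc_r(A)\subset g_r(A)$, giving $E_g\subset E_{cb}\cap E_{bc}$), Proposition \ref{GeoEigensetIsEquiToUnionOComponents} (giving $E_{cb},E_{bc}\subset\mathcal{C}$), and the Proposition that unions of components are geodesic eigensets (giving $\mathcal{C}\subset E_g$). Your explicit statement of the $\equiv$-invariance of the eigenset property is a worthwhile addition that the paper leaves implicit, but it does not change the argument.
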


\begin{Corollary}
If $X$ is a proper geodesic space, then its geodesic ends correspond to Freundenthal ends of $X$.
\end{Corollary}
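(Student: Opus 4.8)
The plan is to show that $X$ is a Freudenthal space in the sense of Definition \ref{DefFreundenthalspace}, and that the Boolean algebra of geodesic eigensets is the Boolean algebra induced by topological connectivity, so that Proposition \ref{FreundenthalEndsCorollary} identifies the geodesic ends with decreasing sequences of unbounded components, i.e. with the Freundenthal ends of Definition \ref{DefFreundenthalEnds}.

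First I would verify the four defining properties of a Freudenthal space. Fix a basepoint $x_0$. Since $X$ is proper, every closed ball $\bar B(x_0,n)$ is compact, so $X=\bigcup_n \bar B(x_0,n)$ is $\sigma$-compact and $X$ is locally compact; it is Hausdorff as a metric space; and it is connected and locally connected because in a geodesic space any geodesic $[x,y]$ satisfies $d(x,\gamma(t))=t$, so it stays inside $B(x,r)$ whenever $d(x,y)<r$, making balls path connected. Next, let $S_X$ be the scale of open bounded subsets and let $C_X$ be topological connectivity. I would note that ``bounded'' for $S_X$ (finite diameter) coincides with relative compactness in a proper space, so the relation $\equiv \bmod S_X$, and hence the induced Boolean algebra and its ends, are unaffected by replacing $S_X$ with any of the usual scales.

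The heart of the argument, and the step I expect to be the main obstacle, is to check that $(X,S_X,C_X)$ is a Freundenthal scaled space (Definition \ref{DefFreundenthalScaledSpace}): for each bounded $B$ there are only finitely many unbounded components of $X\setminus B$ and the union of the remaining components is bounded. Enlarging $B$, I may assume $B=\bar B(x_0,R)$. For any $w$ with $d(x_0,w)>R$, a geodesic $\gamma$ from $x_0$ to $w$ has $d(x_0,\gamma(t))=t$, so its tail on $(R,d(x_0,w)]$ lies entirely in $X\setminus B$ and hence in the component of $w$; thus every component reaching beyond $R+1$ meets the sphere $S(x_0,R+1)$ in the point $\gamma(R+1)$. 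If infinitely many distinct components met that sphere, compactness of $S(x_0,R+1)$ would give a convergent subsequence with limit in some component $D$, which is open by local connectivity; cofinally many of those points would then lie in $D$, forcing the corresponding components to coincide with $D$, a contradiction. Hence only finitely many components reach beyond $R+1$; among these are all the unbounded components and finitely many bounded ones, while every remaining component lies in the bounded annulus $\bar B(x_0,R+1)\setminus B$, so the union of all bounded components is bounded.

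Finally I would assemble the pieces. By the previous Corollary the geodesic eigensets are exactly the subsets equivalent $\bmod S_X$ to unions of components of $X\setminus K$ for bounded $K$; by Proposition \ref{BAInducedByConnectivity} (whose hypothesis is the boundedness condition just verified) this is precisely the Boolean algebra $BA_X$ induced by $C_X$, so the geodesic ends are the ends of $BA_X$. Taking $B_n:=\bar B(x_0,n)$, an increasing basis of bounded subsets, Proposition \ref{FreundenthalEndsCorollary} puts the ends of $BA_X$ in bijection with decreasing sequences $(C_n)$ of unbounded components of $X\setminus B_n$. Since $(\bar B(x_0,n))$ is an exhausting sequence in the sense of Definition \ref{DefFreundenthalEnds}, these sequences are exactly the Freundenthal ends of $X$; a routine cofinal comparison of $(\bar B(x_0,n))$ with an arbitrary exhausting sequence shows the bijection is independent of the chosen exhaustion, giving the desired correspondence.
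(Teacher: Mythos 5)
Your proof is correct and follows the route the paper clearly intends: the paper states this corollary without proof, as an immediate consequence of the preceding corollary (geodesic eigensets are exactly the sets equivalent to unions of components of complements of bounded sets) together with the Freundenthal machinery of Section 3.1, and your argument supplies exactly the missing verifications --- that a proper geodesic space is a Freudenthal space, that $(X,S_X,C_X)$ satisfies the finiteness condition of Definition \ref{DefFreundenthalScaledSpace} (your compact-sphere argument for finitely many unbounded components is the right one), and the application of Proposition \ref{FreundenthalEndsCorollary}. No gaps.
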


Given a metric space $(X,d)$, the \textbf{Gromov product} of $x$ and $y$ with respect to $a\in X$ is defined by
$$
\left< x,y\right>_a=\frac{1}{2}\big(d(x,a)+d(y,a)-d(x,y)\big).
$$

Recall that metric space $(X,d)$ is (Gromov) $ \delta-$\textbf{hyperbolic} if it satisfies the $\delta/4$-inequality:
$$
\left< x,y\right>_{a} \geq \min \{\left< x,z\right>_{a},\left< z,y \right> _{a}\}-\delta/4, \quad \forall x,y,z,a\in X.$$

$(X,d)$ is \textbf{Gromov hyperbolic} if it is $ \delta-$hyperbolic for some $\delta > 0$.

The assumptions in the next Proposition are chosen specifically so that it applies to both visual hyperbolic spaces and visual CAT(0)-spaces.

\begin{Proposition}
Suppose $X$ is a visual geodesic space (i.e. there is $x_0\in X$ such that the union of all geodesic rays emanating from $x_0$ equals $X$), $\sim$ is an equivalence relation on
the set of geodesic rays emanating from the basepoint $x_0\in X$, and $\mathcal{T}$ is a topology an the set $\partial X$ of equivalence classes induced by $\sim$. 
Given $A\subset \partial X$ define $Cone(A)$ as the union of all geodesic rays $r$ so that $[r]\in A$. Suppose the following conditions are satisfied:\\
1. If a sequence of rays $r_n$ converges pointwise to $r_0$,
then $[r_n]$ converges to $[r_0]$ in $\mathcal{T}$.\\
2. Given a sequence $[r_n]\in \partial X$, $n\ge 1$,
there is a subsequence $n(k)$ of positive integers
and rays $s_k$ converging pointwise to a ray $r$ so that $[s_k]=[r_{n(k)}]$
for each $k\ge 1$.\\
3. If $[r]$ belongs to the closure $cl(A)$ of $A\subset \partial X$, then there are exist rays $r_n$, $n\ge 0$,
such that $[r_0]=[r]$, $[r_n]\in A$ for $n\ge 1$, and $r_n$ converges pointwise to $r_0$.\\
4. Given two geodesic rays $r$ and $s$ at $x_0$, $r\sim s$
if and only if there is a sequence $x_n\in r$ diverging to infinity
and $M > 0$ such that $dist(x_n,s) < M$ for each $n\ge 1$.

If $A$ is a clopen subset of $\partial X$, then both $Cone(A)$ and $X\setminus Cone(A)$ are geodesic eigensets of $X$. 

Conversely, given $D\subset X$ such that $D$ and $X\setminus D$ are  eigensets of $(X,S_X,\mathcal{F})$, then there is a clopen subset $A$ of  $\partial X$ of $X$ such that $D\equiv Cone(A)$ mod $S_X$.
\end{Proposition}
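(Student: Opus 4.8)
The plan is to exploit the characterization of geodesic eigensets already established: by the first Proposition of this subsection together with Proposition \ref{GeoEigensetIsEquiToUnionOComponents} and its Corollary, a subset of $X$ is a geodesic eigenset precisely when it is equivalent mod $S_X$ to a union of components of $X\setminus K$ for some bounded closed $K$. The single geometric fact that makes everything uniform is that a geodesic ray $r$ emanating from $x_0$ satisfies $d(x_0,r(s))=s$; hence if $K\subset B(x_0,R)$ then $r(s)\notin K$ for every $s>R$, so the tail $r((R,\infty))$ lies in one component of $X\setminus K$. Thus ``being eventually in a set $D$'' becomes, for $D$ a union of components of $X\setminus K$, simply ``$r(s)\in D$ for all $s>R$,'' with the threshold $R$ the same for all rays. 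I will use this repeatedly.

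For the converse, let $D$ and $X\setminus D$ be eigensets; by the characterization pick a bounded closed $K\subset B(x_0,R)$ with $D$ equal, mod $S_X$, to a union of components of $X\setminus K$, and (since geodesic spaces are locally connected and $X\setminus K$ is open) these components are open. Define $A:=\{[r]\in\partial X: r(s)\in D\text{ for all }s>R\}$. Because a point $p\in D$ with $d(x_0,p)>R$ lies on a ray $t$ (visuality) whose tail through $p$ stays in the single $D$-component of $p$, one gets $D\setminus B(x_0,R)\subset Cone(A)$; conversely every ray defining a class of $A$ has its tail in $D$ and only a bounded initial segment possibly outside, so $Cone(A)\setminus B(x_0,R)\subset D$. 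Hence $D\equiv Cone(A)$ mod $S_X$. That $A$ is clopen follows from condition 3: if $[r]\in cl(A)$ (resp. $[r]\in cl(\partial X\setminus A)$) choose rays $q_n\to q_0$ pointwise with $[q_0]=[r]$ and $[q_n]\in A$ (resp. $[q_n]\notin A$); for any fixed $s>R$ the points $q_n(s)$ lie in a $D$-component (resp. in $X\setminus D$), and since $q_n(s)\to q_0(s)\in X\setminus K$ and the components are open, $q_0(s)$ lies in the same component, forcing $[r]\in A$ (resp. $[r]\notin A$). Thus both $A$ and its complement are closed, so $A$ is clopen.

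For the forward direction I will show $Cone(A)\equiv cb_1(Cone(A))$ mod $S_X$; by Proposition \ref{GeoEigensetIsEquiToUnionOComponents} this already forces $Cone(A)$ to be equivalent to a union of components of $X\setminus K$, hence a geodesic eigenset, and then $X\setminus Cone(A)$ is one as well by the definition of eigenset. Since $Cone(A)\subset cb_1(Cone(A))$ always, it suffices to prove $cb_1(Cone(A))\setminus Cone(A)$ is bounded. Suppose not and take $p_n\to\infty$ in it; then each $p_n$ lies on a geodesic $[x_0,y_n]$ with $y_n$ within $1$ of a point $w_n$ on a ray $u_n$ with $[u_n]\in A$, while $p_n\notin Cone(A)$ forces $[t_n]\notin A$ for every ray $t_n$ through $p_n$ (such a $t_n$ exists by visuality). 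Applying condition 2 (and condition 1) to the sequences $\{[u_n]\}$ and $\{[t_n]\}$ and passing to a common subsequence, I obtain limit classes $[u]$ and $[t]$ with $[u]\in cl(A)=A$ and $[t]\in cl(\partial X\setminus A)=\partial X\setminus A$, using that $A$ is closed and open respectively. The goal is then to derive $[t]=[u]$, contradicting $[t]\notin A\ni[u]$.

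The crux, and the step I expect to be the main obstacle, is producing the equivalence $t\sim u$ from condition 4. The point $p_n$ sits on the geodesic $[x_0,y_n]$ whose far endpoint $y_n$ is a bounded distance from the ray $u_n$; one must argue that the intermediate point $p_n$ itself remains within a uniformly bounded distance of $u_n$, and that this bounded-distance relation survives the passage to the limiting rays, yielding a sequence of points on $t$ that stay within a fixed $M$ of $u$ while diverging to infinity. Once that is in hand, condition 4 gives $t\sim u$, i.e. $[t]=[u]$, the desired contradiction. I expect the delicate part to be the uniform tracking of $p_n$ by $u_n$ (a comparison-geometry phenomenon that holds for both visual hyperbolic and visual CAT(0) spaces) and the diagonal argument needed to transfer it, through conditions 1 and 2, to the limit rays; conditions 1--4 are precisely engineered to carry out this transfer, but assembling them into a single uniform estimate is where the real work lies.
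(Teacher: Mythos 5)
Your forward direction contains a genuine gap, and it is one you flag yourself. You reduce, correctly, to showing that $cb_1(Cone(A))\setminus Cone(A)$ is bounded (a nice economy: one operator and one radius suffice, via Proposition \ref{GeoEigensetIsEquiToUnionOComponents} and the corollary that eigensets are exactly unions of components), you extract limit classes $[u]\in A$ and $[t]\notin A$ from a hypothetical unbounded sequence $p_n$, and then you write that producing $t\sim u$ is ``the main obstacle'' and that assembling conditions 1--4 into the needed uniform estimate ``is where the real work lies.'' That step is the entire content of the forward implication: without it there is no contradiction and nothing is proved. Concretely, what is missing is an argument that the intermediate points $p_n$ of the geodesics $[x_0,y_n]$ (whose far endpoints $y_n$ lie within $1$ of $Cone(A)$) remain within a uniformly bounded distance of rays representing classes of $A$, and that this bounded-distance relation survives passage to the pointwise limits so that condition 4 applies to $t$ and $u$; in a bare geodesic space intermediate points of geodesics with nearby endpoints need not track one another, so this cannot be waved through. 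The paper's own proof of this step proceeds by choosing, for each $x_n\in g_r(C)\setminus C$, a point $y_n\in C$ with $d(x_n,y_n)<2r$, passing to rays through the $x_n$ and the $y_n$ that converge pointwise to $l$ and $l'$, and then invoking condition 4 to conclude $l\sim l'$; terse as that is, it is precisely the argument your proposal must supply and does not.

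Your converse direction is essentially the paper's: replace $D$ mod $S_X$ by a union of components of $X\setminus K$ via Proposition \ref{GeoEigensetIsEquiToUnionOComponents}, let $A$ consist of the classes of rays whose tails lie in $D$, and deduce clopenness of $A$ from condition 3 (your use of openness of components is a clean way to finish that part). The one step you assert without proof is that $A$ is well defined on $\sim$-classes, i.e.\ that if one ray in a class has its tail in $D$ then every ray in that class does; your inclusion $Cone(A)\setminus B(x_0,R)\subset D$ depends on this, since $Cone(A)$ is the union of \emph{all} rays in classes of $A$. The paper closes this with condition 4: a point of $t$ far from $x_0$ and within $M$ of $s$ is joined to $s$ by a short path missing $K$, hence the two tails lie in the same component of $X\setminus K$. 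This omission is fillable; the gap in the forward direction is not filled.
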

\begin{proof}
Let $C:=Cone(A)$.
Suppose $g_r(C)\setminus C$ is not bounded for some $r > 0$.
We can choose a sequence $x_n\in g_r(C)\setminus C$ diverging to infinity
such that some geodesic rays $l_n$ containing $x_n$ converge pointwise to a ray $l$. Therefore $[l]\notin A$.
Choose points $y_n\in C$ satisfying $d(x_n,y_n)< 2r$ for large $n$.
We may choose, by passing to subsequences, rays $l'_n\in A$ containing $y_n$ and converging pointwise to a ray $l'$ equivalent to $l$, hence not in $C$,
a contradiction. 

Let us show that an unbounded geodesic eigenset is, up to a bounded set, 
equivalent to $Cone(A)$ for some clopen $A$. Suppose $D\subset X$ is a geodesic eigenset of $(X,S_X,\mathcal{F})$. By \ref{GeoEigensetIsEquiToUnionOComponents}, we can choose $M > 0$ such that $D\setminus B(x_0,M)$ and $(X\setminus D)\setminus B(x_0,M)$ are union of components of $X\setminus B(x_0,M)$. Given $r > 0$ and given a geodesic ray $l$ at $x_0$, $l\setminus B(x_0,M)$
intersects either $D$ or $X\setminus D$ but not both. Put $A:=\{[l]:l\text{ is a geodesic ray at } x_0\text{ and } l\setminus B(x_0,M)\cap D\ne\emptyset\}$. Notice that if two geodesic rays $t$ and $s$ at $x_0$, satisfy $t\sim s$ and $t\cap D$ contains a geodesic ray
emanating from a point in $D$, then $s\cap D$ contains a geodesic ray
emanating from a point in $D$. 
Indeed, by Condition 4), there is a sequence $x_n\in t$ diverging to infinity
and $M > 0$ such that $dist(x_n,s) < M$ for each $n\ge 1$.
Consider $r:=M+1$ and conclude that $s\cap D$ contains a geodesic ray
emanating from a point in $D$. Therefore, $D\equiv Cone(A)$, and similarly $X\setminus D\equiv Cone(\partial X\setminus A)$. It remains to show that $A$ is closed in $\partial X$ as that implies it is clopen in $\partial X$.
It is so by Condition 3).
\end{proof}

\begin{Corollary} Suppose $X$ is a proper geodesic space.
If $X$ is Gromov hyperbolic and visual, then its geodesic ends correspond to the components of the Gromov boundary $\partial_h(X)$ of $X$, 
\end{Corollary}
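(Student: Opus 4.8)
The plan is to apply the preceding Proposition with $\partial X:=\partial_h(X)$, the Gromov boundary of $X$, equipped with its usual topology $\mathcal T$ and with $\sim$ taken to be the asymptotic relation described in its Condition 4. The first task is to verify that, for a proper geodesic $\delta$-hyperbolic visual space, this data does satisfy Conditions 1--4. Condition 4 holds essentially by definition once one recalls the standard fact that in a $\delta$-hyperbolic space two geodesic rays issuing from $x_0$ stay within bounded distance of one another (equivalently, one comes within bounded distance of the other along a sequence tending to infinity) if and only if they are asymptotic, i.e. represent the same point of $\partial_h(X)$; thus the quotient $\partial X/{\sim}$ is $\partial_h(X)$ as a set. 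Condition 1 is built into the definition of the boundary topology: pointwise convergence (uniform on compacta) of rays forces convergence of the associated endpoints in $\mathcal T$.

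Condition 2 is where properness enters: since $X$ is proper, unit-speed geodesic rays from $x_0$ are uniformly Lipschitz maps into a space whose closed balls are compact, so the Arzel\`a--Ascoli theorem lets us extract from any sequence $r_n$ a subsequence converging pointwise to a geodesic ray $r$, with $[s_k]=[r_{n(k)}]$. Finally, because $X$ is proper geodesic and hyperbolic, $\partial_h(X)$ is compact and metrizable, hence first countable; combining first countability with Conditions 1 and 2 (and the Hausdorffness of $\partial_h(X)$) gives the sequential description of closure demanded by Condition 3. With Conditions 1--4 in hand, the Proposition yields that $A\mapsto Cone(A)$ is a bijection between clopen subsets of $\partial_h(X)$ and geodesic eigensets of $X$ taken mod $S_X$; since this map respects unions, intersections and complements (by the same Proposition applied to $A$ and $\partial_h(X)\setminus A$), it is an isomorphism of Boolean algebras $\mathrm{Clopen}(\partial_h(X))\cong BA_{\mathcal F}/S_X$.

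It remains to pass from this algebra isomorphism to the statement about ends. By definition the geodesic ends of $X$ are the ends of $(X,S_X,BA_{\mathcal F})$, which are exactly the ultrafilters of $BA_{\mathcal F}/S_X$, i.e. the points of its Stone space; under the isomorphism above this is the Stone space of $\mathrm{Clopen}(\partial_h(X))$. For a compact Hausdorff space the quasi-components coincide with the connected components, and the Stone space of the algebra of clopen subsets of such a space is precisely its space $\pi_0$ of connected components. Hence the space of geodesic ends of $X$ is homeomorphic to $\pi_0(\partial_h(X))$, which is the assertion of the Corollary. I expect the main obstacle to be the careful verification of Condition 4 --- reconciling the one-sided, sequential bounded-distance relation with the symmetric asymptotic relation defining $\partial_h(X)$ --- together with cleanly invoking the Stone-duality fact that the clopen algebra of a compact Hausdorff space has the component space as its Stone dual.
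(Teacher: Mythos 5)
Your proposal is correct and follows exactly the route the paper intends: the Corollary is stated as an immediate consequence of the preceding Proposition (whose hypotheses were, as the paper notes, "chosen specifically so that it applies to both visual hyperbolic spaces and visual CAT(0)-spaces"), and your verification of Conditions 1--4 via Arzel\`a--Ascoli, metrizability of $\partial_h(X)$, and the standard asymptoty criterion, followed by the Stone-duality identification of the end space with $\pi_0$ of the compact Hausdorff boundary, supplies precisely the details the paper leaves implicit. No gaps.
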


\begin{Corollary} Suppose $X$ is a proper geodesic space.
If $X$ is a visual CAT(0) space, then the geodesic ends of $X$ correspond to components of the visual boundary of $X$.
\end{Corollary}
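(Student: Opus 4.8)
The plan is to verify that the visual boundary $\partial X$ of a proper visual CAT(0) space, equipped with its cone topology and the asymptoty relation on geodesic rays, satisfies the four hypotheses of the preceding Proposition on visual geodesic spaces, and then to read off the conclusion via Stone duality exactly as in the Gromov hyperbolic corollary. Recall that in a proper CAT(0) space the metric is convex and closed balls are compact, and that by convexity each asymptoty class of rays contains a unique representative emanating from $x_0$; thus rays at $x_0$ biject with points of $\partial X$, and the visual hypothesis is precisely the requirement that every point of $X$ lie on such a ray.

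First I would dispatch the two topological conditions. Condition 1 holds because the cone topology on $\partial X$ is by definition the topology of uniform convergence on compacta of the representatives at $x_0$, and for unit-speed geodesics pointwise convergence already forces uniform convergence on compacta. Condition 2 is an Arzel\`a--Ascoli argument: the rays at $x_0$ form an equicontinuous ($1$-Lipschitz) family, so, $X$ being proper, the representatives of any sequence $[r_n]$ have a subsequence $s_k$ converging uniformly on compacta to a ray $r$, which is the desired subsequence. Condition 3 follows since for proper $X$ the compact space $\partial X$ is metrizable, hence closure points are sequential limits, and cone-convergent sequences lift to pointwise convergent sequences of representatives at $x_0$.

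The key step is Condition 4, where the CAT(0) geometry does the work. Given rays $r,s$ at $x_0$, the function $t\mapsto \dist(r(t),s)$ (distance to the convex image of $s$) is convex and vanishes at $0$. If $\dist(r(t_n),s)<M$ for some $t_n\to\infty$, then convexity bounds this function by $M$ on all of $[0,\infty)$, since it lies below the chord over $[0,t_n]$; hence $r$ stays within $M$ of $s$, and the symmetric estimate shows $r\sim s$. The converse implication is immediate. I expect this convexity estimate to be the only genuinely CAT(0)-specific point; everything else is formal.

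With the four conditions in hand, the preceding Proposition yields a bijection, respecting unions and complements modulo $S_X$ (hence all Boolean operations), between clopen subsets $A$ of $\partial X$ and geodesic eigensets $D\equiv Cone(A)$ of $(X,S_X,\mathcal{F})$. Thus the Boolean algebra $BA_{\mathcal{F}}$ of geodesic eigensets, taken modulo bounded sets, is isomorphic to the Boolean algebra of clopen subsets of the compact space $\partial X$, with unbounded eigensets corresponding to nonempty clopen sets. The geodesic ends of $X$ are by definition the ends of $(X,S_X,BA_{\mathcal{F}})$, i.e. the maximal families of unbounded eigensets all of whose finite intersections remain unbounded; under the isomorphism these are exactly the ultrafilters of clopen subsets of $\partial X$. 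Since $\partial X$ is compact Hausdorff, its components and quasicomponents coincide, and the Stone space of its clopen algebra is the space $\pi_0(\partial X)$ of connected components. Therefore the geodesic ends of $X$ correspond to the components of the visual boundary, as claimed. As in the Gromov hyperbolic case, properness is used both for compactness and metrizability of $\partial X$ and for the Arzel\`a--Ascoli step in Condition 2.
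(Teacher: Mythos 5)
Your proof is correct and follows exactly the route the paper intends: the corollary is stated without proof as an immediate consequence of the preceding Proposition, and you supply the verification of conditions 1--4 (Arzel\`a--Ascoli via properness, the cone topology as uniform convergence on compacta, and convexity of the distance to a ray for condition 4) together with the standard Stone-duality identification of ultrafilters of clopen sets with components of the compact Hausdorff space $\partial X$. Nothing to add.
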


\begin{Question}
Given a metric space $X$ and $p\in X$ one can introduce $S_X$-operators ($S_X$ being the family of open bounded subsets of $X$) $f_r$, $r > 0$,
as 
$$f_r(A)=\{x\in X \mid \left< x,y\right>_p > r \textrm{ for some y in A}\}.$$
What can be said about the resulting ends of $X$?
\end{Question}

\end{document}